\documentclass[final,12pt]{colt2025} % Include author names

% The following packages will be automatically loaded:
% amsmath, amssymb, natbib, graphicx, url, algorithm2e
\usepackage{notations}

\usepackage{bbm}

\usepackage{xcolor}
\usepackage{caption}
\usepackage{tikz}
\usetikzlibrary{arrows,decorations.pathmorphing,backgrounds,positioning,fit,petri}
\usepackage{bbm}
\usepackage{csquotes}
\usepackage{comment}
\hypersetup{breaklinks=true}

\definecolor{RossoE}{RGB}{198, 0, 0}%

%\newcommand\commentoe[1]{\textcolor{RossoE}{#1}}

%

% Definitions of handy macros can go here

\title[Reduction of deep neural networks in the overparametrized regime]{Information-theoretic reduction of deep neural networks \\to linear models in the overparametrized proportional regime}
\usepackage{times}
% Use \Name{Author Name} to specify the name.
% If the surname contains spaces, enclose the surname
% in braces, e.g. \Name{John {Smith Jones}} similarly
% if the name has a "von" part, e.g \Name{Jane {de Winter}}.
% If the first letter in the forenames is a diacritic
% enclose the diacritic in braces, e.g. \Name{{\'E}louise Smith}

% Two authors with the same address
% \coltauthor{\Name{Author Name1} \Email{abc@sample.com}\and
%  \Name{Author Name2} \Email{xyz@sample.com}\\
%  \addr Address}

% Three or more authors with the same address:
% \coltauthor{\Name{Author Name1} \Email{an1@sample.com}\\
%  \Name{Author Name2} \Email{an2@sample.com}\\
%  \Name{Author Name3} \Email{an3@sample.com}\\
%  \addr Address}

% Authors with different addresses:
\coltauthor{%
\Name{Francesco Camilli} \Email{fcamilli@ictp.it} \\
       \addr The Abdus Salam International Centre for Theoretical Physics\\
       Trieste 34151, Italy
       \AND
       \Name{Daria Tieplova} \Email{dtieplov@ictp.it} \\
       \addr The Abdus Salam International Centre for Theoretical Physics\\
       Trieste 34151, Italy
       \AND
       \Name{Eleonora Bergamin} \Email{ebergami@sissa.it} \\
       \addr International School for Advanced Studies\\
       Trieste 34136, Italy
       \AND
       \Name{Jean Barbier} \Email{jbarbier@ictp.it} \\
       \addr The Abdus Salam International Centre for Theoretical Physics\\
       Trieste 34151, Italy.
}

\begin{document}

\maketitle

\begin{abstract}%
We rigorously analyse fully-trained neural networks of arbitrary depth in the Bayesian optimal setting in the so-called \emph{proportional scaling regime} where the number of training samples and width of the input and all inner layers diverge proportionally. We prove an information-theoretic equivalence between the Bayesian deep neural network model trained from data generated by a teacher with matching architecture, and a simpler model of optimal inference in a generalized linear model. This equivalence enables us to compute the optimal generalization error for deep neural networks in this regime. We thus prove the ``deep Gaussian equivalence principle'' conjectured in \cite{cui2023optimal}. Our result highlights that in order to escape this ``trivialisation'' of deep neural networks (in the sense of reduction to a linear model) happening in the strongly overparametrized proportional regime, models trained from much more data have to be considered.
\end{abstract}

\begin{keywords}%
 Deep neural networks, Gaussian equivalence principle, Bayes-optimal learning, overparameterization, proportional regime, interpolation method
\end{keywords}

\section{Introduction}

Neural networks are exceptional tools in many applications such as image classification and speech processing to name a few \citep{Alzubaidi2021ReviewOD}. They have therefore stimulated an unprecedented interest in their behavior, which is still understood mostly at the empirical level. The reason is that developing a quantitative theory for their performance is a challenging mathematical task. The challenge resides in the complex interaction of at least three aspects: \emph{i)} their complex architecture; \emph{ii)} the impact of the data structure; \emph{iii)} the complex dynamics of the optimisation algorithms used for training. In the present paper, we shall focus on information-theoretical limits, which are algorithm-independent, and structureless input data, in order to highlight the phenomenology induced by the architecture alone, point \emph{i)}.
In order to study the fundamental limits of learning  a certain target function, the correct framework to consider is the \emph{teacher-student scenario}, where said target function plays the role of the ``teacher'' that generates the responses associated with the inputs. Then, a \enquote{student} model belonging to the same hypothesis class must learn the target based on these examples. In the present setting this entails that the teacher and student neural networks will have the very same architecture and hyper-parameters, and only the realization of the teacher's weights is to be learnt.
 The importance of the teacher-student scenario was pinpointed in the seminal work of \cite{Gardner-derrida-perceptron} on the perceptron. 
% It is the setting of information-theoretical relevance, because a Bayesian student network that matches the teacher's architecture and exploits the correct prior law on its weights achieves the best possible prediction performance, for any learning procedure.

The Bayes-optimal generalization error for the perceptron and generalized linear model was rigorously computed in \cite{JeanGLM_PNAS}. This model and the committee machine have been intensively studied since the nineties. We refer the reader to the (non-exhaustive) list of contributions: \cite{barkai1992broken,Zippelius_committee_92,schwarze1993learning,schwarze1992generalization,schwarze1993generalization,monasson1995weight,Saad_Solla_committee_95,mato1992generalization,engel2001statistical,aubin2018committee,baldassi2019properties,goldt2020dynamics,goldt2020hiddenmanifold,reeves_MarcGET}. For these models where the number of hidden units is dimension-independent, the relevant interpolation regime corresponds to a number of samples diverging proportionally to the input dimension.

Concerning settings with a large number of hidden units we have the series of works \cite{nitanda2017stochastic,sirignano2020mean,araujo2019mean,nguyen2019mean,nguyen2020rigorous,mean-field-landscape-2-layer,mean-field-2-layer-kernal-limit,Chizat-Bach-2018,Rostkoff-Eijnded-23,Marco-Diyuan22,Marco-Montanari-AOS-MF,Marco-Alex-MF-piecewise,pmlr-shevchenko20a,mean-field-2-layer-kernal-limit}, investigating online learning with SGD. Mean-field analyses of SGD differ from the information-theoretical one. SGD produces a ``one-shot estimator'' for labels, which is in general sub-optimal. Furthermore, in online learning, the student network sees one sample at a time, whereas in Bayes-optimal learning the whole dataset is jointly exploited. 

As evidenced by \cite{mean-field-2-layer-kernal-limit}, when few samples are provided to the neural network during training, the weights are virtually fixed to their initialization, which is the setting captured by the Neural Tangent Kernel of \cite{jacot-gabriel-hongler-2021} and similarly studied in random feature models and lazy training regimes \citep{RF-original,NIPS2017_a96b65a7,RF-rudi-rosasco,Bach-2017,li2018learning,allen2019convergence,du2018gradient,du2019gradient,lee2020wide,arora2019exact,huang2020dynamics,MontanariMei-RF-regression,ghorbani2019linearized, montanari2020interpolation,gerace2020GET_ICML,d2020double,geiger2020perspective,dhifallah2020precise,bordelon2020spectrum,Yue2022GEP,Dmitriev_deep_RF,bosch2023precise,bruno2023det_equivalent}.

In \cite{SompolinskyPRX2022} the authors instead tackled the learning problem of all the weights for deep linear networks whose width is proportional to the input dimension, see also \cite{ZavatoneVeth2022ContrastingRA}. Their linear nature allows to solve the statistical mechanics of these networks by a subsequent integration of the degrees of freedom associated with the network weights. A whole subsequent line of work generalised their approach to tackle non-linear networks in the so-called \emph{proportional regime}, in which the number of training examples $n$ and the sizes of all layers $(d_\ell)$ diverge simultaneously and in a proportional way: \cite{pacelli2023,naveh2021,seroussi2023,aiudi2023,bassetti2024,rubin2025kernelsfeaturesmultiscaleadaptive}. In this strongly overparametrized proportional regime, these works show that \emph{some} feature learning happens, in the sense that the kernel learnt by the network adapts to the data w.r.t. to the fixed kernel that emerges in the infinite width limit described by the neural network Gaussian processes~\citep{neal1996,williams1996,lee2018gaussian,matthews2018gaussian,hanin2023infinite}. Still in the proportional regime but closer to our setting, \cite{cui2023optimal} conjecture a formula for the Bayes-optimal generalization error of a deep neural network using the replica method \citep{mezard1987spin}. One key ingredient of their analysis is a ``deep Gaussian equivalence principle'' conjecture, a version of Gaussian Equivalence Principle (GEP). Classical results support the validity of GEPs in high-dimensional inference  \citep{sudakov1978proj,diaconis1984asymptotics,reeves2017conditionalCLT,meckes2012approximation}, such as in the description of relevant observables for shallow neural networks \citep{goldt2020hiddenmanifold,reeves_MarcGET}. 

In the present paper, we prove this deep Gaussian equivalence principle for the proportional scaling regime (and slightly beyond) and, as a consequence, the associated replica symmetric formula for the free entropy and generalisation error conjectured in \cite{cui2023optimal}. Doing so, we substantially improve over \cite{camilli-tiepova-barbier2023fundamental} which was limited to a shallow neural network and did not reach the same convergence rates. The proof follows an induction strategy: in the considered scaling regime, we are able to show that a network with $L$ layers reaches the same Bayes-optimal generalization error as a suitably tuned neural network with $L-1$ layers. In order to achieve this, we are required to prove that the non-linearity can be replaced by an \enquote{equivalent} linearity, which amounts to prove a Gaussian Equivalence Principle.

Let us finally mention the analyses of the multi-layer generalised linear model of \cite{three-layer-GLM_Jean_Marylou,chen2023free}. Although in this model the weights are fixed, and thus not learnable as in our setting, it shares some common challenges due to the multi-layer structure, especially related to the concentration properties w.r.t.\ the quenched randomness of the model.

\paragraph{Notations.} Bold notations are reserved for vectors and matrices. A vector $\bx$ is a column vector; $\bx^\intercal$ is then a row vector. The $L_2$ norm is $\|\bx\|^2=\bx^\intercal \bx$; $\bx\bx^\intercal$ is a rank-one projector. $\EE_A$ is an expectation with respect to the r.v.\ $A$; $\EE$ is an expectation with respect to all ensuing r.v.\ 's. $\mathbb{V}(A)$ is the variance of $A$, and $\mathbb{V}_A(\,\cdot\,)$ the variance w.r.t. $A$ only conditional on the rest. For a function $F$ of one argument we denote $F'$ its derivative. $[N]\equiv \{1,2,\dots,N\}$. We write $\EE(\cdots)^2=\EE[(\cdots)^2]\ge (\EE(\cdots))^2=\EE^2(\cdots)$. We shall abbreviate $\EE[f(\cdots)]\equiv\EE f(\cdots)$. For any functions $f$ and $g$, $f=O(g)$ means there exists a constant $K$ such that $|f|\leq K|g|$. Hence, given a r.v.\ $Z$, $f=O(Z)$ does \emph{not} imply $\E f =0$, but rather $|\E f|\leq \E|f|\leq K\E|Z|$. $\mathbbm{1}_k$ is the identity matrix in $k$ dimensions. Starred variables, e.g. $\ba^*$, are the teacher's weights. Non-starred variables, e.g. $\ba$, will denote the student's weights. $X\sim P$ means that the r.v. is drawn from, or has law, $P$.

\section{Definitions and main results}
We consider a teacher-student setting, where the training set, namely the set of $n$ input-response couples $\mathcal{D}_L=\{(\bX_\mu^{(0)},Y_\mu)\}_{\mu=1}^n$, is generated by a teacher neural network with $L$ layers fed with structureless inputs $\bX_\mu^{(0)}\iid\mathcal{N}(0,\mathbbm{1}_{d_0})$. Responses $Y_\mu$ are assumed to be real numbers drawn from an output channel accounting for possible randomness in the output:
\begin{align}\label{eq:output_channel_labels}
    Y_\mu\sim P_{  out}\Big(\cdot\mid \rho\frac{\mathbf{a}^{*\intercal}\bX_\mu^{(L)}}{\sqrt{d_L}}+\sqrt{\epsilon}\xi_\mu^*\Big)\,,\quad \ba^*\in\mathbb{R}^{d_L}\,,\,\xi_\mu^*\iid\mathcal{N}(0,1)
\end{align}
where we have introduced the post-activations
\begin{align}
    \bX_\mu^{(\ell)}=\varphi\Big(\frac{\bW^{*(\ell)}\bX^{(\ell-1)}_\mu}{\sqrt{d_{\ell-1}}}\Big)\in\mathbb{R}^{d_\ell}\,,\quad  \,\bW^{*(\ell)} \in\mathbb{R}^{d_\ell\times d_{\ell-1}}\,,\,\ell\in[L]\,,
\end{align}with the non-linearity $\varphi$ applied component-wise. $\rho> 0$, and $\epsilon\ge 0$ can be taken to be zero and is introduced here for later convenience. The positive integers $(d_\ell)_{\ell=0}^L$ are thus the dimensions of the layers. We group the teacher's weights in $\btheta^{*(L)}:=(\ba^*,(\bW^{*(\ell)})_{\ell=1}^L)$, and we assume all of them are i.i.d.\ drawn from standard Gaussians:
\begin{align}
    a^*_i\,,\,W^*_{jl}\iid P_\theta\equiv\mathcal{N}(0,1)\,,\quad
    dP(\btheta^{*(L)}):=\prod_{i=1}^{d_L}dP_\theta(a^*_i)\prod_{\ell=1}^L\prod_{j=1}^{d_\ell}\prod_{k=1}^{d_{\ell-1}}dP_\theta(W^{*(\ell)}_{jk})\,.
\end{align}
A convenient representation for \eqref{eq:output_channel_labels} is the following:
\begin{align}\label{eq:output_Gaussian_channel}
    Y_\mu=f\Big(\rho\frac{\mathbf{a}^{*\intercal}\bX_\mu^{(L)}}{\sqrt{d_L}}+\sqrt{\epsilon}\xi_\mu^*;\bA_\mu\Big)+\sqrt{\Delta}Z_\mu\,,\quad Z_\mu\iid\mathcal{N}(0,1)\,.
\end{align}Here $\bA_\mu\in\mathbb{R}^p$, with $p$ a given integer, is some possible stochasticity in the output and $\Delta$ is the strength of the label noise. $f:\mathbb{R}\mapsto\mathbb{R}$ is the readout function. The mapping between \eqref{eq:output_channel_labels} and \eqref{eq:output_Gaussian_channel} is realized by the identification
\begin{align}\label{eq:output_kernel_vs_Gauss_channel}
    P_{  out}(y\mid x)=\int_{\mathbb{R}^p}\frac{dP_A(\bA)}{\sqrt{2\pi\Delta}}\exp\Big[-\frac{1}{2\Delta}(y-f(x;\bA))^2\Big]\,.
\end{align}Therefore, suitable regularity hypothesis on $f$ will also reflect in the output kernel $P_{  out}$.

The Bayes-optimal student network learns from $\mathcal{D}_L$ and estimates the response associated with a new input $\bX_{  new}\in\mathbb{R}^{d_0}$ as
\begin{align}\label{eq:BO_label_estimate}
    \hat{Y}_{  new}=\int y\,\EE\Big[P_{  out}\Big(y\mid \frac{\mathbf{a}^{\intercal}\bx_{  new}^{(L)}}{\sqrt{d_L}}+\sqrt{\epsilon}\xi_{new}\Big)\mid\mathcal{D}_L,\bX_{  new}\Big]\, dy
\end{align}where $\xi_{new}\sim\mathcal{N}(0,1)$, and we have introduced the student's post activations
\begin{align}
    \bx_{  new}^{(\ell)}=\varphi\Big(\frac{\bW^{(\ell)}\bx^{(\ell-1)}_{  new}}{\sqrt{d_{\ell-1}}}\Big)\in\mathbb{R}^{d_\ell}\,,\quad  \,\bW^{(\ell)} \in\mathbb{R}^{d_\ell\times d_{\ell-1}}\,,\,\ell\in[L]\,,
\end{align}with $\bx_{  new}^{(0)}\equiv\bX_{new}$. Analogous relations hold for the student's post activations $\bx_\mu^{(\ell)}$ associated with inputs in the training set. The measure $\mathbb{E}[\,\cdot\mid\mathcal{D}_L]$ that enters \eqref{eq:BO_label_estimate} is the Bayes' posterior measure, and it shall be denoted also by $\langle\,\cdot\,\rangle^{(L)}$. The estimator \eqref{eq:BO_label_estimate} provably yields (see \cite{JeanGLM_PNAS,camilli-tiepova-barbier2023fundamental}) the minimum (average) generalization error
\begin{align}\label{eq:gen_error_definition}
    \mathcal{E}^{(L)}:=\EE(Y_{  new}-\hat Y_{  new})^2
\end{align}where the expectation is w.r.t.\ to the law of the training set $\mathcal{D}_L$ and the test data $(\bX_{  new},Y_{new})$. 

We bypass the computation of the Bayes-optimal generalization error \eqref{eq:gen_error_definition} by computing the limiting Mutual Information (MI) between the training set $\mathcal{D}_L$, and the teacher's weights $\btheta^{*(L)}:=(\ba^*,(\bW^{*(\ell)})_{\ell=1}^L)$ and the variables $\bxi^*=(\xi^*_\mu)_{\mu=1}^n$. The MI can be written in a convenient way, in terms of the normalization of the Bayes posterior
\begin{align}\label{eq:Posterior}
    dP(\btheta^{(L)},\bxi\mid\mathcal{D}_L)= \frac{1}{\cZ_L(\mathcal{D}_L)} dP(\btheta^{(L)})\prod_{\mu=1}^n dP_\theta(\xi_\mu) P_{out}\Big(Y_\mu\mid\rho\frac{\ba^{\intercal}\bx_\mu^{(L)}}{\sqrt{d_L}}+\sqrt{\epsilon}\xi_\mu\Big)
\end{align}that is
\begin{align} \label{eq:partition_function}
    \cZ_L(\mathcal{D}_L)= \int\prod_{\mu=1}^n \EE_{\xi} P_{  out} \Big(Y_\mu\mid\rho\frac{\ba^{\intercal}\bx_\mu^{(L)}}{\sqrt{d_L}}+\sqrt{\epsilon}\xi_\mu\Big)dP(\btheta^{(L)})\,.
\end{align}With these notations the MI per data point rewrites as
\begin{align}\label{eq:MI_definition}
    \frac{1}{n}I_n^{(L)}(\btheta^{*(L)},\bxi^*;\mathcal{D}_L)=\EE\log  P_{  out}\Big(Y_1\mid \rho\frac{\ba^{*\intercal}\bX^{(L)}_1}{\sqrt{d_L}}+\sqrt{\epsilon}\xi^*\Big)-\frac{1}{n}\EE\log\mathcal{Z}_L(\mathcal{D}_L)\,.
\end{align}
The last term is called \emph{free entropy} and shall be denoted $\bar f^{(L)}_n:=\frac{1}{n}\EE\log\mathcal{Z}_L(\mathcal{D}_L)$. In addition, the MI with $L=0$, which means in the absence of hidden layers, has to be interpreted as the one of a Generalized Linear Model (GLM), see \cite{JeanGLM_PNAS}. We can now state our main result.
\begin{theorem}[Layer reduction] \label{thm:reduction}
    Let the activation and readout functions $\varphi,f\in C^2(\mathbb{R})$ be odd and with bounded first and second derivatives. Let $Z$ be a standard Gaussian, assume $L\geq 1$ and define the coefficients
    \begin{align}
        \sigma_0=1\,,\, \sigma_\ell=\EE\varphi^2(Z\sqrt{\sigma_{\ell-1}})\,,\quad \rho_\ell=\EE\varphi'(Z\sqrt{
        \sigma_{\ell-1}}),\quad \epsilon_\ell=\sigma_\ell-\sigma_{\ell-1}\rho_\ell^2\,,
    \end{align}for all $\ell\in[L]$.
    Denote by $d_m=\min\{(d_\ell)_{\ell=0}^L\}$. Then
    \begin{align}\label{eq:diff_MIs}
        \Big|\frac{1}{n}I_n^{(L)}(\btheta^{*(L)},\bxi^*;\mathcal{D}_L)-
        \frac{1}{n}{I}_n^{(L - 1)}(\btheta^{*(L-1)},\bxi^*;\mathcal{D}_{L-1})\Big|=O\Big(\Big(1+\sqrt{\frac{n}{d_m}}+\frac{n}{d_m}\Big)\frac{1}{\sqrt{d_m}}\Big)\,,
    \end{align}where ${I}_n^{(L - 1)}(\btheta^{*(L-1)},\bxi^*;\mathcal{D}_{L-1})$ is the mutual information between the hidden teacher weights, $\bxi^*$ and the data set $\mathcal{D}^{L-1}=\{(\bX_\mu^{(0)},Y_\mu^{(L-1)})\}_{\mu=1}^n$ with responses drawn as
    \begin{align}
        Y_\mu^{(L-1)}\sim P_{  out}\Big(\cdot\mid\rho\rho_L\frac{\ba^{*\intercal}\bX^{(L-1)}_\mu}{\sqrt{d_{L-1}}}+\sqrt{\rho^2\epsilon_L+\epsilon}\xi^{*}_\mu\Big)\,.\label{eq:reduced_channel}
    \end{align}
\end{theorem}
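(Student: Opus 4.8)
The plan is to establish a one-step Gaussian Equivalence Principle at the outermost layer: conditionally on the penultimate post-activations, replace the non-linear map $\bX^{(L-1)}_\mu\mapsto\bX^{(L)}_\mu=\varphi(\bW^{*(L)}\bX^{(L-1)}_\mu/\sqrt{d_{L-1}})$ by an affine map with the same first two moments, and then rewrite the resulting linear last layer as a renormalisation $\rho\to\rho\rho_L$ of the signal together with an extra Gaussian noise of variance $\rho^2\epsilon_L$, exhibiting $\mathcal D_L$ as $\mathcal D_{L-1}$ up to controlled errors. All estimates will be made conditionally on $(\bX^{(\ell)}_\mu)_{\ell\le L-1,\,\mu\le n}$; as a preliminary --- proven separately by induction on depth from $\|\varphi'\|_\infty,\|\varphi''\|_\infty<\infty$ and the Gaussianity of the inputs --- one records the concentration estimates $q_\mu:=\|\bX^{(L-1)}_\mu\|^2/d_{L-1}=\sigma_{L-1}+O(d_m^{-1/2})$ and $\bX^{(L-1)\intercal}_\mu\bX^{(L-1)}_{\mu'}/d_{L-1}=O(d_m^{-1/2})$ for $\mu\ne\mu'$, with sub-Gaussian fluctuations. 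The one structurally favourable feature is that $\bW^{*(L)}$ is Gaussian, so conditionally on $\bX^{(L-1)}_\mu$ the pre-activations $u_{\mu j}:=(\bW^{*(L)}\bX^{(L-1)}_\mu)_j/\sqrt{d_{L-1}}$ are \emph{exactly} jointly Gaussian; the non-Gaussianity to be fought is only the one introduced by $\varphi$ and by summing $d_L$ neurons against $\ba^*$.

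The first step is a Stein/Hermite decomposition of the last non-linearity: for $u\sim\mathcal N(0,q)$ write $\varphi(u)=\EE[\varphi'(Z\sqrt q)]\,u+\varphi^\perp_q(u)$, where $\EE[\varphi^\perp_q(Z\sqrt q)Z]=0$ and $\EE\,\varphi^\perp_q(Z\sqrt q)^2=\EE\varphi^2(Z\sqrt q)-q\,\EE^2\varphi'(Z\sqrt q)$, which at $q=\sigma_{L-1}$ are exactly the coefficients $\rho_L$ and $\epsilon_L$ of the statement; oddness of $\varphi$ kills the zeroth Hermite coefficient, so no deterministic shift is produced. Substituting into $\rho\,\ba^{*\intercal}\bX^{(L)}_\mu/\sqrt{d_L}$ isolates a signal term $\rho\,\EE[\varphi'(Z\sqrt{q_\mu})]\,\mathbf v^{*\intercal}\bX^{(L-1)}_\mu/\sqrt{d_{L-1}}$, with $\mathbf v^*:=\bW^{*(L)\intercal}\ba^*/\sqrt{d_L}\in\mathbb R^{d_{L-1}}$ which, given $\ba^*$, is $\mathcal N(0,(\|\ba^*\|^2/d_L)\mathbbm 1_{d_{L-1}})$ and will become the teacher read-out of the $(L-1)$-layer problem; a noise term $N_\mu:=\rho d_L^{-1/2}\sum_j a^*_j\varphi^\perp_{q_\mu}(u_{\mu j})$; and corrections generated by $q_\mu\ne\sigma_{L-1}$, bounded via $\|\varphi''\|_\infty$ and the concentration of $q_\mu$. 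A short Stein computation gives $\EE[N_\mu\mathbf v^*_k]=0$ (because $\EE\varphi^{\perp\prime}_{q_\mu}(Z\sqrt{q_\mu})=0$) and, since $\varphi^\perp$ has vanishing zeroth and first Hermite coefficients, $\EE[N_\mu N_{\mu'}]=O(d_m^{-1})$ for $\mu\ne\mu'$; hence in the Gaussianised model below $(N_\mu)_\mu$ behaves like a vector of independent $\mathcal N(0,\rho^2\epsilon_L)$ variables, independent of $\mathbf v^*$ and of the deeper weights.

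The core step replaces, inside both the label channel \eqref{eq:output_Gaussian_channel} and the posterior $\langle\,\cdot\,\rangle^{(L)}$, the field $S_\mu=\rho\,\ba^{*\intercal}\bX^{(L)}_\mu/\sqrt{d_L}+\sqrt\epsilon\,\xi^*_\mu$ by $\tilde S_\mu=\rho\rho_L\,\mathbf v^{*\intercal}\bX^{(L-1)}_\mu/\sqrt{d_{L-1}}+\sqrt{\rho^2\epsilon_L+\epsilon}\,\xi^{\mathrm{new}}_\mu$. Since $f\in C^2$ and $\Delta>0$ make $x\mapsto\log P_{out}(y\mid x)$ smooth with controlled derivatives, the free entropy $\frac1n\EE\log\mathcal Z_L$ and the energy term $\EE\log P_{out}(Y_1\mid\,\cdot\,)$ of \eqref{eq:MI_definition} are smooth functionals of the joint law of $(S_\mu)_\mu$; I would pass from $(S_\mu)$ to $(\tilde S_\mu)$ by interpolation --- a Lindeberg-type telescoping that swaps the rows of $\bW^{*(L)}$ (equivalently the neurons) one at a time in the $\varphi^\perp$ part, plus a Guerra-type Gaussian interpolation for the already-Gaussian part and for the final identifications $q_\mu\to\sigma_{L-1}$, $\|\ba^*\|^2/d_L\to1$ and $N_\mu+\sqrt\epsilon\,\xi^*_\mu\to\sqrt{\rho^2\epsilon_L+\epsilon}\,\xi^{\mathrm{new}}_\mu$. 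The definitions of $\rho_L,\epsilon_L$ are precisely what makes the first two moments of each swapped contribution match up to $O(d_m^{-1/2})$ (self-variances) and $O(d_m^{-1})$ (cross-covariances), so each swap costs only a higher-order remainder; Bayes-optimality, through the Nishimori identities, keeps the teacher- and student-side replacements consistent and expresses the residual interpolation derivatives in terms of overlaps that concentrate quantitatively thanks to the preliminary norm/Gram estimates. Summing the contributions, the $O(d_m^{-1/2})$ self-variance mismatches over the $n$ samples produce the $d_m^{-1/2}$ term, the $O(d_m^{-1})$ cross-covariance mismatches over the $O(n^2)$ sample pairs --- damped by the near-cancellation of the conjugate off-diagonal overlaps --- produce the $(n/d_m)d_m^{-1/2}$ term, and their cross contribution yields $\sqrt{n/d_m}\,d_m^{-1/2}$; once the swap is done, $\mathbf v^*$ is exactly the read-out of a teacher with $L-1$ layers, $\rho$ has been renormalised to $\rho\rho_L$, and $\rho^2\epsilon_L$ has merged with $\epsilon$, so the new data set is $\mathcal D_{L-1}$ with channel \eqref{eq:reduced_channel} and \eqref{eq:diff_MIs} follows.

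I expect the principal obstacle to be the quantitative bookkeeping of the cross-sample dependence: the $n$ fields $(S_\mu)_\mu$ are coupled through the shared weights $\ba^*,\bW^{*(L)}$, and the penultimate activations $\bX^{(L-1)}_\mu$ are themselves non-Gaussian, so the interpolation errors are not a naive sum of $n$ independent $O(d_m^{-3/2})$ terms; extracting the exact $\sqrt{n/d_m}$ and $n/d_m$ scalings requires combining the concentration of the norms and Gram entries of the penultimate activations with concentration of the posterior overlaps, and carefully tracking how a single row of $\bW^{*(L)}$ acts on all $n$ samples at once. A secondary difficulty is controlling uniformly --- over the interpolation and over the high-probability event carrying the preliminary estimates --- the $O(d_m^{-1/2})$ discrepancies $q_\mu-\sigma_{L-1}$, $\|\ba^*\|^2/d_L-1$ and $\EE\varphi'(Z\sqrt{q_\mu})-\rho_L$, which is exactly where the hypotheses that $\varphi,f\in C^2$ have bounded first and second derivatives, and that $\varphi,f$ are odd, are genuinely used.
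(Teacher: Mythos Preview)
Your high-level strategy --- moment-match the last nonlinearity and interpolate --- is the paper's, but the execution diverges in two consequential places.

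First, the paper does not Hermite-decompose $\varphi$ and set $\bv^*=\bW^{*(L)\intercal}\ba^*/\sqrt{d_L}$; instead it introduces \emph{fresh independent} Gaussians $\bv^*\sim\mathcal N(0,\mathbbm 1_{d_{L-1}})$ and $(\zeta_\mu^{*(L)})_\mu$ and runs a single Guerra path
$S_{t\mu}=\sqrt{1-t}\,(\text{nonlinear})+\sqrt{t}\,\rho_L\bv^{*\intercal}\bX^{(L-1)}_\mu/\sqrt{d_{L-1}}+\sqrt{t\epsilon_L}\,\zeta^{*(L)}_\mu$.
This sidesteps an entanglement your scheme creates: in your Lindeberg swap each row of $\bW^{*(L)}$ sits simultaneously in your signal $\bv^*$ and in your noise $N_\mu$, so swapping ``only the $\varphi^\perp$ part'' is not clean. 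With fresh $\bv^*,\zeta^{*(L)}$, the old variables $\ba^*,\bW^{*(L)}$ simply drop out at $t=1$ and the $(L-1)$-layer model emerges directly.

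Second --- and this is the genuine gap --- the rates in the paper do not come from overlap concentration. They come from Cauchy--Schwarz against the \emph{free-entropy} fluctuation $\mathbb V(\frac1n\log\cZ_t)=O(1/n+1/d_m)$, which is a separate theorem (Theorem~\ref{th:Z_k_concentr}) whose multilayer proof (Poincar\'e--Nash plus Efron--Stein, layer by layer) is itself a substantial piece of the argument. This is what converts the sums $\sum_\mu U_{\mu\mu}(\cdots)$ and $\sum_{\mu\ne\nu}U_{\mu\nu}(\cdots)$ appearing in $\frac{d}{dt}\bar f_{n,t}$ into the announced bound. For the off-diagonal $\mu\ne\nu$ piece the paper additionally needs a \emph{circular interpolation} $\bW^{*(L)}\mapsto\bW^{*(L)}\sin\tau+\tilde\bW^*\cos\tau$ to pull a Gaussian factor outside $\varphi$ and integrate by parts; only then does the quasi-orthogonality factor $\bX^{(L-1)}_\mu\cdot\bX^{(L-1)}_\nu/d_{L-1}$ appear explicitly and deliver the $(n/d_m)d_m^{-1/2}$ scaling. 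Your rate heuristic lands on the right exponents, but a per-neuron Lindeberg swap touches all $n$ samples at once through $\log\cZ$, and without the free-entropy concentration and the circular-interpolation integration-by-parts there is no evident mechanism to extract enough decay from the $O(n^2)$ cross terms you correctly flag as the principal obstacle.
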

This Theorem shows that in the linear regime, at the information-theoretic level, layers can be eliminated one by one paying the price of extra Gaussian noise. This is the manifestation of the Gaussian Equivalence Principle in our setting. In fact, what the result asserts is that the non-linearity for the last layer can be replaced by a linear equivalent. The values of the coefficients $\epsilon_L,\rho_L$ that re-weight the two new terms appearing in the equivalent model with one less layer are the same found in \cite{MontanariMei-RF-regression,goldt2020hiddenmanifold,Yue2022GEP,camilli-tiepova-barbier2023fundamental}.

One key ingredient needed to prove this type of result is the concentration of the free entropy w.r.t. the teacher and data instance.  Even though this is a natural conjecture, it becomes particularly hard to prove in the multi-layer model due to the number of random parameters and their intertwined dependences. We thus state here this stand alone result which may be of independent interest.
\begin{theorem}[Free entropy concentration]\label{th:Z_k_concentr}
Consider the activation and readout function $\varphi,\,f$ as in Theorem~\ref{thm:reduction}. Then there exists a non-negative constant $C(f,\varphi)$ such that
\begin{align}
  \mathbb{V}\Big(\dfrac{1}{n}\log \cZ _L(\mathcal{D}_L)\Big)
    \leq C(f,\varphi)\Big(\frac{1}{n}+\frac{1}{d_m}\Big)\,.
\end{align}
\end{theorem}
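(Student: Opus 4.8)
} Write $X=\frac1n\log\cZ_L(\mathcal{D}_L)$ and split the variance as $\mathbb{V}(X)=\mathbb{V}\big(\EE[X\mid \bA]\big)+\EE\big[\mathbb{V}(X\mid \bA)\big]$, where $\bA=(\bA_\mu)_{\mu\le n}$ collects the (arbitrarily distributed) output stochasticity. The plan is to bound the first term by Efron--Stein applied to the $n$ i.i.d.\ blocks $\bA_\mu$, and the second term, conditionally on $\bA$, by the Gaussian Poincar\'e inequality applied to the standard Gaussian vector collecting the inputs $(\bX^{(0)}_\mu)_\mu$, the teacher weights $\btheta^{*(L)}$, and the noises $(\xi^*_\mu,Z_\mu)_\mu$. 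The common starting observation is that every relevant partial derivative of $\log\cZ_L$ factors through the $n$ scalar pre-activations $h^*_\mu=\rho\,\ba^{*\intercal}\bX^{(L)}_\mu/\sqrt{d_L}+\sqrt{\epsilon}\,\xi^*_\mu$ (and, for the input derivatives, additionally through the student post-activation $\bx^{(L)}_\mu$ inside the posterior $\langle\,\cdot\,\rangle^{(L)}$), with $\partial_{h^*_\mu}\log\cZ_L=f'(h^*_\mu;\bA_\mu)\,\big\langle\partial_y\log\EE_\xi P_{out}(Y_\mu\mid h_\mu)\big\rangle^{(L)}=:u_\mu$, where $h_\mu=\rho\,\ba^{\intercal}\bx^{(L)}_\mu/\sqrt{d_L}+\sqrt\epsilon\,\xi_\mu$ is the student pre-activation.

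\emph{Routine pieces.} Since $\varphi,f$ are odd with bounded derivatives, one has $|\varphi(t)|\le\|\varphi'\|_\infty|t|$, hence $\|\bx^{(\ell)}\|\le\|\varphi'\|_\infty^{\ell}\prod_{k\le\ell}\|\bW^{(k)}\|_{\mathrm{op}}/\sqrt{d_{k-1}}\,\|\bx^{(0)}\|$ and analogous bounds for all the Jacobians appearing in the chain rule. Together with the standard Gaussian-matrix estimates $\EE\|\bW^{*(\ell)}\|_{\mathrm{op}}^{2q}=O((d_\ell+d_{\ell-1})^q)$, this gives, in the (slightly-beyond-)proportional regime, $\EE(h^*_\mu)^4=O(1)$ and $\EE\|\nabla_{\bX^{(0)}_\mu}h^*_\mu\|^4=O(1)$, etc. The score term $u_\mu$ is controlled by combining the Gaussian representation $P_{out}(y\mid x)=\int\mathcal{N}(y;f(x;\bA),\Delta)\,dP_A(\bA)$ with the Jensen lower bound $\log\EE_\xi P_{out}(y\mid h)\ge-\tfrac12\log(2\pi\Delta)-\tfrac1{2\Delta}\EE_{\xi,\bA}(y-f(h;\bA))^2$, and with Nishimori's identity (in both its single- and two-replica forms) to replace posterior averages by teacher expectations; this yields $\EE\langle\,\cdot\,\rangle$-moments of $u_\mu$, of $\log\EE_\xi P_{out}(Y_\mu\mid h_\mu)$, and of the Efron--Stein increments $\log\cZ_L-\log\cZ_L^{(\mu)}$ (with $\bA_\mu$ resampled) that are $O(1)$ uniformly in $\mu$. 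Summing over the $\mu$'s, the contributions of $\nabla_{\bX^{(0)}}$, $\nabla_{\bxi^*}$, $\nabla_{\bZ}$, and of the $\bA$-Efron--Stein terms are each $O(n)$; dividing by $n^2$ produces the $\tfrac1n$ part of the bound.

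\emph{Main term.} The teacher weights enter only through the $h^*_\mu$, so $\nabla_{\btheta^*}\log\cZ_L=\sum_\mu u_\mu\,\nabla_{\btheta^*}h^*_\mu$ and $\|\nabla_{\btheta^*}\log\cZ_L\|^2=\mathbf{u}^\intercal\mathbf{G}\,\mathbf{u}$ with $\mathbf{u}=(u_\mu)_\mu$ and $\mathbf{G}=(K_{\mu\nu})_{\mu,\nu}$, $K_{\mu\nu}=\langle\nabla_{\btheta^*}h^*_\mu,\nabla_{\btheta^*}h^*_\nu\rangle$. Expanding $K_{\mu\nu}$ by the chain rule across the $L$ layers expresses it, up to bounded Jacobian/derivative prefactors, as a combination of post-activation inner products $\langle\bX^{(\ell)}_\mu,\bX^{(\ell)}_\nu\rangle$. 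The diagonal contributes $\sum_\mu\EE[u_\mu^2K_{\mu\mu}]=O(n)$ by Cauchy--Schwarz and the moment bounds above. For the off-diagonal I would prove $|\EE[u_\mu u_\nu K_{\mu\nu}]|=O(1/d_m)$ for $\mu\ne\nu$. Here one uses: (i) oddness of $\varphi$ plus the symmetry of the Gaussian weights to get $\EE\langle\bX^{(\ell)}_\mu,\bX^{(\ell)}_\nu\rangle=0$ exactly (flip $\bX^{(0)}_\mu\mapsto-\bX^{(0)}_\mu$), and $\EE\langle\bX^{(\ell)}_\mu,\bX^{(\ell)}_\nu\rangle^2=O(d_\ell)$ (cross-sample, distinct-coordinate terms vanish because distinct rows of $\bW^{*(\ell)}$ are independent and $\varphi$ is odd); (ii) rotational invariance of $(\bW^{*(L)},\ba^*)$ to write $\EE[u_\mu u_\nu\mid\mathcal{F}_{L-1}]$ as a smooth function of the layer-$(L-1)$ Gram entries whose value at zero overlap is $O(1/\sqrt{d_m})$ (at zero overlap $u_\mu,u_\nu$ are nearly independent with $\EE u_\mu=0$, again by an oddness flip) and whose slope in the cross-overlap near zero is $O(1/d_m)$ (the correlation transmitted to the next layer is $O(1/d_\ell)$ per coordinate). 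A Taylor expansion in the small overlap, combined with (i), then kills the leading term and leaves $O(1/d_m)$; propagating this inductively through all $L$ layers gives $\EE\|\nabla_{\btheta^*}\log\cZ_L\|^2=O(n)+O(n^2/d_m)$, whence the $\tfrac1{d_m}$ part after dividing by $n^2$.

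\emph{Main obstacle.} The delicate step is precisely this off-diagonal estimate $\EE[u_\mu u_\nu K_{\mu\nu}]=O(1/d_m)$: the naive bound is only $O(1/\sqrt{d_m})$, since distinct-sample post-activations are orthogonal only up to $O(\sqrt{d_\ell})$ fluctuations, and the improvement genuinely requires exploiting the oddness of the nonlinearities and the independence of distinct weight rows to cancel the leading cross-correlation --- the same mechanism that underlies the Gaussian Equivalence Principle of Theorem~\ref{thm:reduction} and that must be carried through all $L$ layers. A secondary nuisance is that $\log P_{out}$ is unbounded below, so every ``$O(1)$ in expectation'' statement above must be made rigorous by pairing the Jensen/Nishimori lower bounds with a truncation on the rare events where the weight-matrix operator norms deviate from their typical $O(1)$ size; the truncation corrections are lower order and are absorbed into $C(f,\varphi)\big(\tfrac1n+\tfrac1{d_m}\big)$.
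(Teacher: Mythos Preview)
Your variance decomposition differs from the paper's in a way that creates the very obstacle you flag at the end. You condition on $\bA$ first and then apply Poincar\'e jointly to all remaining Gaussian randomness (inputs, noises, \emph{and} teacher weights). Consequently the gradient in the teacher weights still carries the full sum $\sum_\mu u_\mu\,\nabla_{\btheta^*}h^*_\mu$, and you are forced to control the $n(n-1)$ off-diagonal terms $\EE[u_\mu u_\nu K_{\mu\nu}]$. Your sketch for these has a genuine gap: the claim that ``at zero overlap $u_\mu,u_\nu$ are nearly independent'' is not correct, because both $u_\mu$ and $u_\nu$ involve the posterior $\langle\cdot\rangle^{(L)}$, which depends on \emph{all} $n$ data points through the partition function. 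Even when $\bX^{(L-1)}_\mu\cdot\bX^{(L-1)}_\nu=0$ the two scores remain coupled through the shared student weights and the other $n-2$ samples. Rotational invariance only says the conditional law depends on the full $n\times n$ Gram matrix, not on the single entry you Taylor-expand in. The oddness flip does give $\EE[u_\mu u_\nu]=0$ and $\EE K_{\mu\nu}=0$, but the product $u_\mu u_\nu K_{\mu\nu}$ is \emph{invariant} under the flip, so no cancellation is obtained and Cauchy--Schwarz leaves you at $O(1/\sqrt{d_m})$.

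The paper sidesteps this entirely by reversing the order of conditioning: it first integrates out all the $\mu$-indexed randomness $\psi_1=(\bX^{(0)}_\mu,\xi^*_\mu,Z_\mu)_\mu$ and $\psi_2=(\bA_\mu)_\mu$, and only then applies Poincar\'e in the teacher weights to $g:=\EE_{\psi_1,\psi_2}f_n$. After the inner average, $\partial g/\partial W^{*(\ell)}_{ij}=\frac1n\sum_\mu\EE_{\psi_1,\psi_2}[\cdots_\mu]$ collapses by i.i.d.-ness over $\mu$ to a single term (any fixed $\mu$), so the off-diagonal issue simply disappears. A naive bound on that single term is still only $O(1)$; to gain the missing $1/d_{\ell-1}$, the paper writes $X^{(\ell-1)}_{j\mu}$ via a \emph{circular interpolation} $\bX^{(0)}_\mu(\tau)=\bX^{(0)}_\mu\sin\tau+\tilde\bX^{(0)}_\mu\cos\tau$ with an independent copy $\tilde\bX^{(0)}_\mu$. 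This manufactures a Gaussian $\dot\bX^{(0)}_\mu(\tau)$ independent of $\bX^{(0)}_\mu(\tau)$ and sitting outside the nonlinearities, which can be integrated by parts; the derivative then hits $S_{t\mu}$, $s_{t\mu}$ or the intermediate $\boldsymbol{\Phi}^{*(k)}_\mu$, each time bringing in an extra chain-rule factor that turns the sum $\sum_{i_\ell,j_\ell}|\partial g/\partial W^{*(\ell)}_{i_\ell j_\ell}|^2$ into $O(1/d_{\ell-1})$ directly, with no cross-sample cancellation needed.
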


The layer reduction Theorem can be iterated until exhaustion of the deep Bayesian network, yielding a Generalized Linear Model. During the reduction process one needs to keep track of how the re-weighting coefficients accumulate. After $L-k$ reduction steps, which means $k$ layers are still left, the network model will then be described by equation \eqref{eq:output_channel_labels}, where $\rho$ and $\epsilon$ are replaced by $\eta_k$ and $\gamma_k$ respectively, defined as follows:
\begin{align}
\begin{split}
&\eta_{k} =  \rho \prod_{i = k + 1}^{L}\rho_{i}\,, \qquad \gamma_{k} =\epsilon+ \rho^2 \sum_{j =k+1}^{L}\epsilon_{j} \prod_{i = j + 1}^{L}\rho^{2}_i  \, , \label{eq:coeffs}
\end{split}
\end{align}where by convention $\prod_{i=L+1}^L(\,\cdot\,)=1$, $\eta_L=\rho$ and $\gamma_L=\epsilon$. The coefficients in \eqref{eq:coeffs} match the ones in \cite{cui2023optimal}. The target model is the GLM where $k=0$, namely when all the $L$ hidden layers have been eliminated and only the readout layer remains. From our previous results, we have the following Corollary, proved in Appendix~\ref{app:complete_reduction}:

\begin{corollary}
[Mutual information equivalence]
    \label{thm:MIAllequivalence}
    Under the same assumptions as Theorem \ref{thm:reduction}, the following holds true:
    \begin{align}\label{boundMutualInfoAll}
    \begin{split}
        &\Big|\frac{1}{n}I_n^{(L)}(\btheta^{* (L)},\bxi^*;\mathcal{D}_L)-\frac{1}{n}I_n^{(0)}(\btheta^{* (0)},\bxi^*;\mathcal{D}_0)\Big| = O\Big(\Big(1+\sqrt{\frac{n}{d_m}}+\frac{n}{d_m}\Big)\frac{1}{\sqrt{d_m}}\Big) \, ,
    \end{split}
    \end{align}
    where ${I}_n^{(0)}(\btheta^{*(0)},\bxi^*;\mathcal{D}_{0})$ is the mutual information associated with the data set $\mathcal{D}$ with responses
    \begin{align*}
        Y_\mu^{(0)}\sim P_{  out}\Big(\cdot\mid\eta_0\frac{\ba^{*\intercal}\bX^{(0)}_\mu}{\sqrt{d_{0}}}+\sqrt{\gamma_0}\xi^{*}_\mu\Big)\,.
    \end{align*}
\end{corollary}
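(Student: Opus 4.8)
The plan is to derive Corollary~\ref{thm:MIAllequivalence} by applying the layer-reduction Theorem~\ref{thm:reduction} exactly $L$ times, peeling off one hidden layer at each step, and summing the resulting errors. The first thing to establish is that the model class of Theorem~\ref{thm:reduction} is \emph{closed} under a single reduction step, so that the theorem can be re-applied. Performing one reduction on the $L$-layer model \eqref{eq:output_channel_labels} produces, by \eqref{eq:reduced_channel}, the dataset $\mathcal{D}_{L-1}$ whose responses are drawn through the \emph{same} output kernel $P_{out}$ (equivalently, the same readout $f$ and label-noise strength $\sqrt{\Delta}$), with the hidden weights $(\bW^{*(\ell)})_{\ell=1}^{L-1}$ and post-activations $\bX_\mu^{(\ell)}$ untouched, with a fresh i.i.d.\ standard-Gaussian readout vector $\mathbf{v}^{(L-1)}\in\mathbb{R}^{d_{L-1}}$ in place of $\ba^*$, and with $(\rho,\epsilon)$ replaced by $(\rho\rho_L,\,\rho^2\epsilon_L+\epsilon)$. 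Since $\varphi$ and $f$ are unchanged, all the regularity hypotheses of Theorem~\ref{thm:reduction} still hold for this $(L-1)$-layer instance; moreover $\rho^2\epsilon_L+\epsilon\ge 0$ (as $\sigma_{L-1}\rho_L^2\le\sigma_L$ by Cauchy--Schwarz together with Gaussian integration by parts), and a possible sign of $\rho\rho_L$ is absorbed into the symmetric vector $\mathbf{v}^{(L-1)}$ (we do not dwell on the non-generic case in which some $\rho_\ell$ vanishes, where the signal simply disappears and the remaining mutual informations are controlled directly).

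Iterating, I would index by $k=L,L-1,\dots,0$ the reduced teacher $\btheta^{*(k)}=(\mathbf{v}^{(k)},(\bW^{*(\ell)})_{\ell=1}^{k})$ and dataset $\mathcal{D}_k$ with responses $Y_\mu^{(k)}\sim P_{out}\big(\cdot\mid \eta_k\,\mathbf{v}^{(k)\intercal}\bX_\mu^{(k)}/\sqrt{d_k}+\sqrt{\gamma_k}\,\xi_\mu^*\big)$, where $\mathbf{v}^{(L)}=\ba^*$. A single reduction step gives the recursion $\eta_{k-1}=\rho_k\eta_k$ and $\gamma_{k-1}=\eta_k^2\epsilon_k+\gamma_k$, with $\eta_L=\rho$ and $\gamma_L=\epsilon$; a straightforward induction on the number of reduction steps shows that the solution is exactly the closed form \eqref{eq:coeffs}, and in particular the $k=0$ model is the GLM with coefficients $(\eta_0,\gamma_0)$ appearing in the statement (its weight vector, written $\ba^*$ there, is $\mathbf{v}^{(0)}$, with the same i.i.d.\ standard-Gaussian law).

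The telescoping bound then follows from the triangle inequality,
\begin{equation}
\Big|\frac{1}{n}I_n^{(L)}(\btheta^{*(L)},\bxi^*;\mathcal{D}_L)-\frac{1}{n}I_n^{(0)}(\btheta^{*(0)},\bxi^*;\mathcal{D}_0)\Big|\le \sum_{k=1}^{L}\Big|\frac{1}{n}I_n^{(k)}-\frac{1}{n}I_n^{(k-1)}\Big|\,,
\end{equation}
where the $k$-th summand abbreviates $|\frac{1}{n}I_n^{(k)}(\btheta^{*(k)},\bxi^*;\mathcal{D}_k)-\frac{1}{n}I_n^{(k-1)}(\btheta^{*(k-1)},\bxi^*;\mathcal{D}_{k-1})|$ and is controlled by Theorem~\ref{thm:reduction} applied to the $k$-layer reduced model: it is $O\big((1+\sqrt{n/q_k}+n/q_k)/\sqrt{q_k}\big)$ with $q_k=\min\{d_0,\dots,d_k\}\ge d_m$. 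Since $t\mapsto(1+\sqrt{n/t}+n/t)/\sqrt{t}$ is non-increasing on $(0,\infty)$ for fixed $n$, each term is $O\big((1+\sqrt{n/d_m}+n/d_m)/\sqrt{d_m}\big)$, and because $L$ is a fixed constant the sum of $L$ such terms retains that order, which is precisely \eqref{boundMutualInfoAll}.

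The main difficulty is not located in this corollary: it lies entirely in Theorem~\ref{thm:reduction} (and the concentration estimate Theorem~\ref{th:Z_k_concentr} it relies on). The only steps requiring care here are \emph{(i)} the closure/self-similarity claim --- verifying that one reduction step maps a teacher of the type covered by Theorem~\ref{thm:reduction} to another such teacher, with prior, output channel and architecture of the required form but one fewer layer, so the theorem applies verbatim --- and \emph{(ii)} checking that the $L$ accumulated errors do not blow up, which is immediate because $L$ does not grow. I note in passing that the bound is $o(1)$ as soon as $n=o(d_m^2)$, so the equivalence holds slightly beyond the proportional regime $n\asymp d_m$.
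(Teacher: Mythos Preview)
Your argument is correct and follows essentially the same route as the paper: iterate Theorem~\ref{thm:reduction}, check that the reduced model stays in the same class, derive the recursion $\eta_{k-1}=\eta_k\rho_k$, $\gamma_{k-1}=\eta_k^2\epsilon_k+\gamma_k$ (which matches the paper's Appendix~\ref{app:complete_reduction}), and telescope the $L$ error terms. One small slip in your closing remark: the bound $(1+\sqrt{n/d_m}+n/d_m)/\sqrt{d_m}$ is $o(1)$ only for $n=o(d_m^{3/2})$, not $n=o(d_m^2)$, since the dominant term is $n/d_m^{3/2}$.
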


The estimate in this corollary holds for finite sizes of layers and training set. Furthermore, it identifies the scaling regime in which the two models become asymptotically equivalent, namely when the right-hand side of \eqref{boundMutualInfoAll} approaches zero as $n,(d_\ell)_{\ell=0}^L\to\infty$. We denote any such scaling limit by $\widetilde\lim$. Notice that this result shows that the trivialisation of deep neural networks in the Bayesian optimal setting happens even beyond the proportional regime $n=\Theta(d)$. Indeed, the r.h.s. of \eqref{boundMutualInfoAll} vanishes also if $n=o(d_m^{3/2})$. Notice also that from $I_n(\btheta^*,\bxi^*;\mathcal{D})$ it is possible to obtain the MI between $\btheta^*$ and $\mathcal{D}$ using the chain rule: $I_n(\btheta^*;\mathcal{D})=I_n(\btheta^*,\bxi^*;\mathcal{D})-I_n(\bxi^*;\mathcal{D}\mid \btheta^*)$. The limit of the last term in the latter is computed in Lemma~\ref{lem:relation_MIs}.

The mutual information between the GLM teacher network weights and the corresponding training dataset is a quantity that has been studied in the literature (\cite{JeanGLM_PNAS}), and its relevance lies in the fact
that the generalization error can be computed from it. As a consequence, we can prove the following:

\begin{theorem}[Generalization error equivalence]\label{thm:gen_error}
    Under the same hypothesis of Theorem \ref{thm:reduction} we have
    \begin{align}
        \widetilde \lim\,|\mathcal{E}^{(L)} - \mathcal{E}^{(0)}|=0\,,
    \end{align}
    where $\mathcal{E}^{(0)}$ is the GLM generalization error associated with $\frac{1}{n}{I}_n^{(0)}(\btheta^{*(0)},\bxi^*;\mathcal{D}_{0})$. %\citep{JeanGLM_PNAS}.
\end{theorem}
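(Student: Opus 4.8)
The plan is to recast both generalisation errors as the \emph{same} deterministic, Lipschitz function of a single scalar overlap order parameter, and then to transfer the convergence already established for the mutual information in Corollary~\ref{thm:MIAllequivalence} to that overlap by means of a Gaussian side‑channel perturbation.

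\textbf{Step 1 (generalisation error as a function of the effective overlap).} Put $g(x):=\int y\,P_{out}(y\mid x)\,dy=\EE_A f(x;A)$ and $h(x):=\EE_A f(x;A)^2$; both are bounded and Lipschitz since $f\in C^2$ has bounded derivatives. A standard Bayes‑optimal (Nishimori) manipulation, exactly as in \cite{JeanGLM_PNAS,camilli-tiepova-barbier2023fundamental}, yields
\[
\mathcal{E}^{(L)}=\Delta+\EE\,h\big(W^*_{\mathrm{new}}\big)-\EE\big[\,g\big(W^*_{\mathrm{new}}\big)\,\big\langle g\big(W_{\mathrm{new}}\big)\big\rangle^{(L)}\,\big]\,,
\]
where $W^*_{\mathrm{new}}=\rho\,\ba^{*\intercal}\bX^{(L)}_{\mathrm{new}}/\sqrt{d_L}+\sqrt{\epsilon}\,\xi^*_{\mathrm{new}}$ is the teacher's effective readout pre‑activation on a fresh input, $W_{\mathrm{new}}$ its student counterpart (posterior‑sampled weights, fresh $\xi_{\mathrm{new}}$), and $\langle\,\cdot\,\rangle^{(L)}$ is understood to also integrate $\xi_{\mathrm{new}}$. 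The one‑dimensional Gaussian behaviour of forward pre‑activations on a fresh Gaussian input, together with the concentration of the posterior (the latter quantified by Theorem~\ref{th:Z_k_concentr}), both controlled by the same pre‑activation estimates used in the proof of Theorem~\ref{thm:reduction}, show that under $\EE\langle\,\cdot\,\rangle^{(L)}$ the pair $(\ba^{*\intercal}\bX^{(L)}_{\mathrm{new}}/\sqrt{d_L},\ \ba^{\intercal}\bx^{(L)}_{\mathrm{new}}/\sqrt{d_L})$ is close in law to a centred Gaussian vector with equal variances $\sigma_L$ (Nishimori) and covariance $q_L:=d_L^{-1}\EE\langle \ba^{*\intercal}\bX^{(L)}_{\mathrm{new}}\,\ba^{\intercal}\bx^{(L)}_{\mathrm{new}}\rangle^{(L)}$, up to an error vanishing under $\widetilde\lim$. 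Hence $(W^*_{\mathrm{new}},W_{\mathrm{new}})$ is asymptotically a centred Gaussian vector with common variance $V_L:=\rho^2\sigma_L+\epsilon$ and covariance $\rho^2 q_L$, so that
\[
\mathcal{E}^{(L)}=\Gamma_{V_L}\big(\rho^2 q_L\big)+o(1)\,,\qquad \Gamma_V(r):=\Delta+\EE\,h(U^*)-\EE\big[g(U^*)g(U)\big]\,,
\]
where $(U^*,U)$ is a centred Gaussian vector with $\EE[U^{*2}]=\EE[U^2]=V$ and $\EE[U^*U]=r$; the map $r\mapsto\Gamma_V(r)$ is Lipschitz on $[0,V]$ because $g$ has bounded derivative. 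The identical computation for the reduced GLM $\mathcal D_0$, with channel $P_{out}(\cdot\mid\eta_0\,\ba^{*\intercal}\bX^{(0)}_\mu/\sqrt{d_0}+\sqrt{\gamma_0}\,\xi^*_\mu)$, gives $\mathcal{E}^{(0)}=\Gamma_{V_0}(\eta_0^2 q_0)+o(1)$ with $V_0:=\eta_0^2\cdot 1+\gamma_0$ and $q_0$ the analogous overlap. Finally, unrolling $\sigma_\ell=\sigma_{\ell-1}\rho_\ell^2+\epsilon_\ell$ from $\sigma_0=1$ gives $\sigma_L=\prod_{i=1}^L\rho_i^2+\sum_{j=1}^L\epsilon_j\prod_{i=j+1}^L\rho_i^2$, whence, by \eqref{eq:coeffs}, $V_L=\rho^2\sigma_L+\epsilon=\eta_0^2+\gamma_0=V_0$; therefore $\Gamma_{V_L}=\Gamma_{V_0}=:\Gamma$ as functions, and it suffices to prove $\rho^2 q_L-\eta_0^2 q_0\to0$ under $\widetilde\lim$, since then $|\mathcal{E}^{(L)}-\mathcal{E}^{(0)}|\le|\Gamma(\rho^2 q_L)-\Gamma(\eta_0^2 q_0)|+o(1)\to0$.

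\textbf{Step 2 (matching the effective overlaps).} Adjoin to both models an extra Gaussian side channel of signal‑to‑noise ratio $\lambda\ge0$ that observes the teacher's effective readout pre‑activations, i.e.\ $\widetilde Y_\mu=\sqrt\lambda\,W^*_\mu+\widetilde Z_\mu$ for the $L$‑layer model (resp.\ $\widetilde Y_\mu=\sqrt\lambda\,(\eta_0\,\ba^{*\intercal}\bX^{(0)}_\mu/\sqrt{d_0}+\sqrt{\gamma_0}\,\xi^*_\mu)+\widetilde Z_\mu$ for the GLM), with $\widetilde Z_\mu\iid\mathcal{N}(0,1)$, and denote by $I_n^{(L)}(\lambda),I_n^{(0)}(\lambda)$ the corresponding mutual informations. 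Because this extra channel observes exactly the quantity on which the layer‑reduction linearisation acts, the proofs of Theorem~\ref{thm:reduction} and Corollary~\ref{thm:MIAllequivalence} go through with only bounded additive modifications to every estimate; in particular $|\frac{1}{n}I_n^{(L)}(\lambda)-\frac{1}{n}I_n^{(0)}(\lambda)|$ is bounded, uniformly for $\lambda$ in any compact interval, by the right‑hand side of \eqref{boundMutualInfoAll}, hence $\to0$ under $\widetilde\lim$. Both maps $\lambda\mapsto\frac{1}{n}I_n^{(\bullet)}(\lambda)$ are concave, and the I‑MMSE relation gives $\frac{d}{d\lambda}\frac{1}{n}I_n^{(L)}(\lambda)=\frac{1}{2}\,\mathrm{mmse}_n^{(L)}(\lambda)$, where $\mathrm{mmse}_n^{(L)}(\lambda)$ is the per‑sample minimum mean‑square error on $W^*_\mu$ given all observations at that $\lambda$; in the Bayes‑optimal setting $\mathrm{mmse}_n^{(L)}(\lambda)=V_L-\rho^2 q_L(\lambda)$ with $q_L(0^+)=q_L$, and likewise for the GLM. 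Uniform closeness of concave functions forces their one‑sided derivatives to be close at every common point of differentiability, so $\rho^2 q_L(\lambda)-\eta_0^2 q_0(\lambda)\to0$ for almost every $\lambda>0$; then the perturbation/overlap‑concentration lemma of \cite{JeanGLM_PNAS,camilli-tiepova-barbier2023fundamental}, whose only input is the free‑entropy concentration of Theorem~\ref{th:Z_k_concentr}, lets us send $\lambda\to0^+$ and conclude $\rho^2 q_L-\eta_0^2 q_0\to0$. Combined with Step~1 this proves the theorem.

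\textbf{Main obstacle.} The two genuinely delicate points are: (i) ensuring the $o(1)$ of Step~1 is uniform over the whole regime $\widetilde\lim$, not merely $n=\Theta(d)$ — this requires reassembling the pre‑activation and posterior‑concentration estimates of Theorem~\ref{thm:reduction} for a single fresh test example and its posterior forward pass, which is conceptually the same computation but has to be redone; and (ii) the interchange of the limits $\lambda\to0^+$ and $n,(d_\ell)\to\infty$ in Step~2, i.e.\ controlling $|q_\bullet(\lambda)-q_\bullet|$ uniformly in $n$. Point (ii) is exactly what Theorem~\ref{th:Z_k_concentr} is designed for: through the usual Boltzmann/Griffiths‑type perturbation argument it produces concentration of the overlap and allows the nudge to be removed, so the residual work there is bookkeeping rather than a new idea.
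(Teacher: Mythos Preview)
Your proposal is correct and follows essentially the same route as the paper, which does not spell out a proof but explicitly defers to Theorem~3 of \cite{camilli-tiepova-barbier2023fundamental} and Theorem~4 of \cite{JeanGLM_PNAS}: express the Bayes-optimal generalisation error as a Lipschitz function of an effective scalar overlap, then use a Gaussian side-channel perturbation together with the I-MMSE relation and the free-entropy concentration (your Theorem~\ref{th:Z_k_concentr}) to transfer the mutual-information equivalence of Corollary~\ref{thm:MIAllequivalence} to convergence of that overlap. Your identification $V_L=V_0$ via the telescoping $\sigma_\ell=\sigma_{\ell-1}\rho_\ell^2+\epsilon_\ell$ and \eqref{eq:coeffs} is exactly the bookkeeping needed, and the two ``main obstacles'' you flag (uniformity of the one-dimensional Gaussian approximation for the test pre-activations, and the $\lambda\to0^+$ limit interchange) are precisely the technical points handled in those references.
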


It is important to note here that the deep neural network and the GLM are trained on two different datasets, each generated by the corresponding teacher network with a matching architecture. Specifically, $\mathcal{E}^{(0)}$ is not the generalization error attained by a GLM trained on $\mathcal{D}_L$.

The proof of Theorem 4 follows exactly that of Theorem 3 in \cite{camilli-tiepova-barbier2023fundamental} which is in turn leveraging on that of Theorem 4 in \cite{JeanGLM_PNAS}.

\section{Outline of the proofs}
\subsection{Proof of Theorem \ref{thm:reduction}}\label{subsec:proof_red}
The proof uses an interpolation argument, similar to those used in the rigorous theory of spin-glass \citep{interp_guerra_2002} and inference \citep{adaptive_original}. Since it does not change the derivation, we select $\rho=1,\epsilon=0$ for clarity. The interpolation is defined at the level of the inference problem itself. More specifically, define
\begin{align}\label{eq:interpol_S_s}
    S_{t\mu}&:=\sqrt{1-t}\frac{\mathbf{a}^{*\intercal}}{\sqrt{d_L}}\varphi\Big(\frac{\mathbf{W}^{*(L)}\mathbf{X}^{(L-1)}_\mu}{\sqrt{d_{L-1}}}\Big)+\sqrt{t}\rho_L\frac{\mathbf{v}^{*\intercal}\mathbf{X}^{(L-1)}_\mu}{\sqrt{d_{L-1}}}+\sqrt{t\epsilon_L}\zeta_\mu^{*(L)}\,,\\
    s_{t\mu}&:=\sqrt{1-t}\frac{\mathbf{a}^\intercal}{\sqrt{d_L}}\varphi\Big(\frac{\mathbf{W}^{(L)}\mathbf{x}^{(L-1)}_\mu}{\sqrt{d_{L-1}}}\Big)+\sqrt{t}\rho_L\frac{\mathbf{v}^{\intercal}\mathbf{x}^{(L-1)}_\mu}{\sqrt{d_{L-1}}}+\sqrt{t\epsilon_L}\zeta^{(L)}_\mu\,,
\end{align}
for any $t\in[0,1]$ and with independent Gaussian vector $\bv^*\sim \mathcal{N}(0,\mathbbm{1}_{d_L})$ and i.i.d. standard Gaussian noise variables $\zeta^{*(L)}_\mu,\zeta^{(L)}_\mu$, and $\bx_\mu^{(0)}=\bX_\mu^{(0)}$. Let the responses be drawn from $Y_{t\mu}\sim P_{  out} (\,\cdot\mid S_{t\mu})$. The parameters to infer in this model are $\btheta^{*(L)}=(\ba^*, (\bW^{*(\ell)}))$ and $\bv^*$. Let us introduce the convenient notation $u_y(x)=\log P_{  out}(y\mid x),u'_y(x)=\partial_x u_y(x)$, or sometimes just $u,u'$ for brevity. For this model we shall have an interpolating training set 
$\mathcal{D}_t=\{(Y_{t\mu},\bX^{(0)}_\mu)_{\mu=1}^n\}$, an interpolating Bayes posterior for $\btheta^{*(L)}\mid \mathcal{D}_t$, with expectation operator
\begin{align}\label{eq:int_posterior}
    \langle\,\cdot\,\rangle_t=\frac{1}{\cZ_t}dP(\btheta^{(L)}) \prod_{i=1}^{d_{L-1}}dP_\theta(v_i)\prod_{\mu=1}^n dP_\theta(\zeta^{(L)}_\mu)\exp\Big(u_{Y_{t\mu}}(s_{t\mu})\Big)(\cdot)
\end{align}
and an associated interpolating free entropy
\begin{align}\label{eq:Int_free_entropy}
    \bar f_{n,t}=\frac{1}{n}\EE_{(t)}\log\mathcal{Z}_t=\frac{1}{n}\EE_{(t)}\log\int dP(\btheta^{(L)})\EE_\bv\prod_{\mu=1}^n\EE_{\zeta_\mu^{(L)}}\exp\Big( u_{Y_{t\mu}}(s_{t\mu})\Big)\,.
\end{align}
For future convenience we also introduce the expectation over the law of the dataset $\mathcal{D}_t$:
\begin{align}
    \EE_{(t)}[\,\cdot\,]=\EE_{\btheta^{*(L)},\bv^*} \int \prod_{\mu=1}^n dY_{t\mu}\,\EE_{\bX_\mu^{(0)},\zeta^{*(L)}_\mu}\exp\Big( u_{Y_{t\mu}}(S_{t\mu})\Big)[\,\cdot\,]\,.
\end{align}
At $t=0$ the model is precisely the $L$-layer neural network, whereas at $t=1$ it reduces to the target $L-1$-layer neural network (recall $\rho=1,\epsilon=0$). Therefore, the goal is to control the derivative of the interpolating mutual information (obtained simply from the interpolating free entropy) w.r.t.\ $t$ and to prove it is vanishing. Here we shall focus only on the contribution to the difference on the l.h.s.\ of \eqref{eq:diff_MIs} given by the free entropies for $L$ and $L-1$ layers. The control of the first term on the r.h.s.\ in \eqref{eq:MI_definition} is deferred to the Appendix, see Lemma~\ref{lem:Psi_constant_MI}. The derivative of $\bar f_{n,t}$ decomposes as
\begin{align}\label{eq:der_free_entropy}
    \begin{split}
        \frac{d}{dt}\bar f_{n,t}=-A_1+A_2+A_3+B
    \end{split}
\end{align}
where, denoting $u_\mu^\prime:=\partial_x\log P_{out}(Y_{t\mu}\mid x)|_{x=S_{t\mu}}$  (not to be confused with $u^{\prime}_{Y_{t\mu}}(s_{t\mu})$) and similarly for higher derivatives $u_\mu'',u_\mu'''$, 
\begin{align}
    &A_1:=\frac{1}{2n\sqrt{(1-t)}}\E_{(t)}\log \cZ_t
    \sum_{\mu=1}^{n}u^{\prime}_\mu\frac{\mathbf{a}^{*\intercal}}{\sqrt{d_L}} \Big[
    \varphi\Big(\frac{\mathbf{W}^{*(L)}\mathbf{X}^{(L-1)}_\mu}{\sqrt{d_{L-1}}}\Big)-\rho_L\frac{\bW^{*(L)}\bX_\mu^{(L-1)}}{\sqrt{d_{L-1}}}\Big]\, ,\label{eq:A_1_def}\\
    &A_2:=\frac{1}{2n}\E_{(t)}\log \cZ_t
    \sum_{\mu=1}^{n}u^{\prime}_\mu\rho_L\Big[\frac{\mathbf{v}^{*\intercal}\mathbf{X}^{(L-1)}_\mu}{\sqrt{td_{L-1}}}-\frac{\ba^{*\intercal}\bW^{*(L)}\bX_\mu^{(L-1)}}{\sqrt{(1-t)d_Ld_{L-1}}}\Big]\, ,\label{eq:A_2_def}\\
    &A_3:=\frac{1}{2n}\E_{(t)}\log \cZ_t
    \sum_{\mu=1}^{n}u^{\prime}_\mu\sqrt{\frac{\epsilon_L}{t}}\zeta_\mu^{*(L)}\, ,\label{eq:A_3_def}\\
    \label{eq:B-def}
     &B:=\frac{1}{n}\E_{(t)}\Big\langle \sum_{\mu=1}^{n}u^{\prime}_{Y_{t\mu}}(s_{t\mu})\frac {ds_{t\mu}}{dt} \Big\rangle_t\,.
\end{align}

The statistical model associated with the posterior \eqref{eq:int_posterior} fulfills a set of symmetries called Nishimori identities, which are proved in Appendix \ref{app:nishiID}. These symmetries force $B$ to vanish identically; see Lemma \ref{lem:B_0}. Notice also that $A_1$ and $A_2$ share a common term which in one case is added, and in the other is subtracted. The reason for this operation is the following: in $A_2$ the two terms in the square parenthesis can be shown to have matching second moments, therefore they are supposed to compensate and cancel each other, which ends up being the case. The proof requires an integration by parts w.r.t.\ the Gaussian laws of $\bv^*$ and $\bW^{*(L)}$, and is deferred to Appendix~\ref{appx:details_th1}. Proving that $A_1$ and $A_3$ cancel each other requires more care. Let us explain the key ideas below.

Let us define $\balpha_\mu:=\bW^{*(L)}\bX_\mu^{(L-1)}/\sqrt{d_{L-1}}$ and $U_{\mu\nu}:=\delta_{\mu\nu}u''_\mu+u'_\mu u'_\nu$ for future convenience. We start from $A_1$ which is the most challenging. A natural way to simplify the expression is to integrate by parts the Gaussian readout weights $\ba^*$, obtaining
\begin{comment}
    One can verify that 
\begin{align}
    &\E[u'_{Y_{t\mu}}(S_{t\mu})\mid S_{t\mu}]=\E[U_{\mu\nu}\mid S_{t\mu},S_{t\nu}]=0\,,\\
    &\E[(u'_{Y_{t\mu}}(S_{t\mu}))^2\mid S_{t\mu}]\,,\,\E[U_{\mu\nu}^2\mid S_{t\mu},S_{t\nu}]\leq C(f)\,,
\end{align}where $C(f)$ is a positive constant depending on the readout function $f$, see {\color{red}Lemma \ref{}}.
\end{comment}
\begin{align}
    A_1=\frac{1}{2n}\EE_{(t)}\log\cZ_t\sum_{\mu,\nu=1}^nU_{\mu\nu}\frac{1}{d_L}\big[\varphi(\balpha_\mu)^\intercal\varphi(\balpha_\nu)-\rho_L\balpha_\mu^\intercal\varphi(\balpha_\nu)\big]\,.
\end{align}Notice that the sum over the data index $\mu$ has doubled since the same weights are interacting with all training sample. This will happen every time we integrate the network weights by parts. On the contrary, the variables $\zeta_\mu^{*(L)}$ are different for every training sample, hence an integration by parts of them in $A_3$ yields
\begin{align}
    A_3=\frac{\epsilon_L}{2n}\E_{(t)}\log \cZ_t
    \sum_{\mu=1}^{n}U_{\mu\mu}\,.
\end{align}Thanks to the fact that $\E[U_{\mu\nu}\mid \bX_\mu,\bX_\nu,\btheta^{*(L)}]=0$ (see Lemma \ref{lem:propertiesPout}), we can subtract from $\log\cZ_t$ its expectation without changing the value of the expressions. Therefore,
\begin{align}\label{eq:main_A3-A1}
    A_3-A_{1}&=\frac{1}{2}\EE_{(t)}\Big(\frac{1}{n}\log\cZ_t-\bar f_{n,t}\Big)\sum_{\mu=1}^nU_{\mu\mu}\Big[
    \epsilon_L-\frac{\|\varphi(\balpha_\mu)\|^2}{d_L}+\rho_L\frac{\balpha_\mu^\intercal\varphi(\balpha_\mu)}{d_L}
    \Big]\nonumber\\
    &-\frac{1}{2}\EE_{(t)}\Big(\frac{1}{n}\log\cZ_t-\bar f_{n,t}\Big)\sum_{\mu\neq \nu,1}^nU_{\mu\nu}\frac{1}{d_L}\big[\varphi(\balpha_\mu)^\intercal\varphi(\balpha_\nu)-\rho_L\balpha_\mu^\intercal\varphi(\balpha_\nu)\big]\,.
\end{align}
Let us focus on the first line. The strategy is to bound it using Cauchy-Schwartz, separating the difference between the random free entropy and its expectation from the summation over $\mu$. This yields a variance of the random free entropy that is $O(\sqrt{1/n + 1/d_m})$ by Theorem \ref{th:Z_k_concentr}. The other factor produced by Cauchy-Schwartz is an expectation of a double sum over $\mu,\nu=1,\dots,n$. For this, we can use the fact that $\E[U_{\mu\mu}\mid \bX_\mu,\btheta^{*(L)}]=0$ which restricts the sum to $\mu=\nu$. Furthermore, using that $\E[U^2_{\mu\mu}\mid \bX_\mu,\btheta^{*(L)}]$ is bounded by a constant (see Lemma \ref{lem:propertiesPout}), we are left with $n$ expectations of the square parenthesis squared. Here the choice of $\rho_L$ and $\epsilon_L$ becomes crucial, and it allows us to prove that the expectation of such square is $O(1/d_m)$ in Lemma~\ref{lem:A_3-A_1}.

The second line requires a more refined treatment. The summation over $\mu\neq \nu$ involves $O(n^2)$ centered random variables, which seems to go against our statement. What saves the day is that for $\mu\neq\nu$ the two terms in the square parenthesis are both tending to zero in the thermodynamic limit and in addition their leading orders are canceling each other, leaving us with a faster decay rate. 

In order to access these leading orders, we use a further \emph{circular interpolation}, that paves the way for a crucial integration by parts. More precisely, let $\tau\in[0,\pi/2]$ be a second interpolating parameter, and let $\tilde\bW^*$ be an independent copy of $\bW^{*(L)}$. Define then
\begin{align}
    \balpha_\mu(\tau):= \balpha_\mu \sin\tau+\frac{\tilde\bW^*\bX^{(L-1)}_\mu}{\sqrt{d_{L-1}}}\cos\tau\,.
\end{align} The great advantage of this interpolation is that $\dot{\balpha}_\mu(\tau):=\partial_\tau {\balpha}_\mu(\tau)$ is independent of $\balpha_\mu(\tau)$ in the measure $\EE_{\bW^{*(L)},\tilde\bW^*}$, because they are Gaussian (conditionally on $\bX_\mu^{(L-1)}$) and decorrelated. Using the fundamental theorem of integral calculus we can rewrite the second line of \eqref{eq:main_A3-A1} as
\begin{align}\label{eq:A_1_off_def}
A_1^{  off}:=\int_0^{\pi/2}d\tau\,\EE_{(t)}\Big(\frac{1}{n}\log\cZ_t-\bar f_{n,t}\Big)\sum_{\mu<\nu,1}^nU_{\mu\nu}\frac{1}{d_L}\dot{\balpha}_\mu(\tau)\circ\big[\varphi'(\balpha_\mu(\tau))-\rho_L\mathbf{1}\big]^\intercal\varphi(\balpha_\nu)\,,
\end{align}where $\mathbf{1}\in\mathbb{R}^{d_L}$ is the all-ones vector, and $\circ$ is the Hadamard entry-wise product. Notice that there is a term related to the $\tau=0$ point of the $\tau$-interpolation, that contains only $\varphi(\tilde\bW^*\bX_\mu^{(L-1)}/\sqrt{d_{L-1}})$. This factor is independent of everything else under $\EE_{\tilde\bW^*}$, and since $\varphi$ is odd it vanishes.

Now we finally have a Gaussian variable $\dot{\balpha}_\mu(\tau)$ (again, conditionally on $\bX_\mu^{(L-1)}$, i.e., w.r.t.\ the measure of the weights) to integrate by parts which, importantly, lies outside of the non-linearity. Another key point is that $\dot{\balpha}_\mu(\tau)$ is independent of $\balpha_\mu(\tau)$, but not of the other $\balpha_\nu$'s. Hence, the following correlations appear: $\EE_{\tilde\bW^*,\bW^{*(L)}}[\dot{\alpha}_{i\mu}(\tau)\alpha_{j\eta}]=\delta_{ij}
\bX_\mu^{(L-1)}\cdot\bX_\eta^{(L-1)}\cos\tau/{d_{L-1}}$. Defining $U_{\mu\nu\eta}:=u''_\mu u'_\nu\delta_{\mu\eta}+u'_\mu u''_\nu \delta_{\nu\eta} +u'_\mu u'_\nu u'_\eta $
an integration by parts w.r.t.\ $\dot{\balpha}_\mu(\tau)$ then yields
\begin{align}
    &A_1^{  off}=\int_0^{\pi/2}d\tau\,\cos\tau\,\EE_{(t)}\Big(\frac{1}{n}\log\cZ_t-\bar f_{n,t}\Big)\sum_{\mu<\nu,1}^n\sum_{\eta=1}^nU_{\mu\nu\eta}\frac{\bX_\mu^{(L-1)}\cdot\bX_\eta^{(L-1)} }{d_{L-1}} \label{eq:A_1off}\\
    &\qquad\qquad \qquad\qquad\times\sqrt{1-t}\sum_{i=1}^{d_L}
    \frac{a_i^*\varphi'(\alpha_{i\eta})}{\sqrt{d_L}} 
    \frac{\varphi'(\alpha_{i\mu}(\tau))-\rho_L}{d_L} \varphi(\alpha_{i\nu})\nonumber \\     &+\int_0^{\pi/2}d\tau\,\cos\tau\,\EE_{(t)}\Big(\frac{1}{n}\log\cZ_t-\bar f_{n,t}\Big)\sum_{\mu<\nu,1}^n U_{\mu\nu}\frac{\bX_\mu^{(L-1)}\cdot\bX_\nu^{(L-1)} }{d_{L-1}}\sum_{i=1}^{d_L}\frac{\varphi'(\alpha_{i\mu}(\tau))-\rho_L}{d_L}\varphi'(\alpha_{i\nu})\,.\nonumber
\end{align}
As we can see, now the scalar products between post-activations for different input data appear. This is crucial, as the post-activations preserve the ``quasi-orthogonality'' of the inputs, meaning $\EE(\bX^{(\ell)}_\mu\cdot\bX^{(\ell)}_\nu/ d_\ell)^{2k}=O(\min\{(d_{s})_{s=0}^\ell\}^{-k})$ whenever $\mu\neq\nu$, as proved in Lemma~\ref{lem:orthogonality propagation}.

Bearing in mind this fact, for both terms in \eqref{eq:A_1off} the proof proceeds with Cauchy-Schwartz inequality, separating the difference of free entropies from the summations over the training set indices $\mu,\nu$ and $\eta$. For both mentioned terms, the other factor produced by Cauchy-Schwartz inequality is a square of multiple summations of training set indices. In any case, said summations are simplified thanks to Lemma~\ref{lem:propertiesPout}. Specifically, using $\EE[U_{\mu\nu\eta}\mid \bX_\mu,\bX_\nu,\bX_\eta,\btheta^{*(L)}]=0=\EE[U_{\mu\nu}\mid \bX_\mu,\bX_\nu,\btheta^{*(L)}]$ and $\EE[U^2_{\mu\nu\eta}\mid \bX_\mu,\bX_\nu,\bX_\eta,\btheta^{*(L)}],\,\EE[U^2_{\mu\nu}\mid \bX_\mu,\bX_\nu,\btheta^{*(L)}]\leq C$ for some positive constant $C$. This reduces the number of indices we are summing over. With only these few ingredients, the first term in \eqref{eq:A_1off} is shown to be of the claimed order. Concerning the second term, one in addition needs to exploit that $(\sum_i\varphi'(\alpha_{i\mu}(\tau))/d_L-\rho_L)^2$ is concentrating to $0$ thanks to the definition of $\rho_L$. This concentration result requires a certain care and is dealt with in Lemma~\ref{lem:moments_post_activations}.

%Let us now do a naive order estimation of the second set of terms in, specifically those appearing in the third and fourth line. As before the difference of the free entropies yields $\sim1/\sqrt{n}$, and the summation of the $\sim n^2$ centered variables contributes to $n$ times the order of the summed variables. Thanks to Lemma \ref{lem:orthogonality propagation} the post-activations $(\bX^{(L-1)}_\mu)_{\mu=1}^n$ are quasi-orthogonal, meaning with overwhelming probability $\bX^{(L-1)}_\mu\cdot \bX^{(L-1)}_\nu=O(\sqrt{d_{L-1}})$, which when normalized yield a contribution of $O(d_{L-1}^{-1/2})$. Finally, the last normalized sum also converges to $0$ thanks to our choice of $\rho_L$ and the law of large numbers. Hence, we get an $O(\sqrt{n/(d_Ld_{L-1})})$ for this set of terms.

%Concerning instead the first two lines of \eqref{eq:A_1off}, notice that they contain a triple summation w.r.t.\ the data point indices $\mu,\nu,\eta$. The fact that $\mu\neq\nu$ in this summation implies that $u'''_\mu$ is actually never there, and only derivatives up to second order appear. The triple summation of centered random variables yields an $O(n^{3/2})$ contribution. The rescaled scalar products, for $\mu\neq \eta$, contribute for $O(d_{L-1}^{-1/2})$. The case $\eta=\mu$ can be treated separately, and more easily since we have one summation less. Due to the presence of $a_i^*$, the summation in the second line of \eqref{eq:A_1off} is of $O(d_L^{-1})$. Therefore, thanks to the concentration of the free energy, the first two lines of \eqref{eq:A_1off} also yield a vanishing contribution.

\subsection{Proof of Theorem~\ref{th:Z_k_concentr}}

In this section we discuss briefly the proof of Theorem~\ref{th:Z_k_concentr}. Even though it is formulated for the model \eqref{eq:output_channel_labels}-\eqref{eq:output_Gaussian_channel}, we consider the interpolated model \eqref{eq:interpol_S_s}-\eqref{eq:int_posterior} as the concentration of its free entropy is required in the proof of Theorem~\ref{thm:reduction}; it anyway recovers the initial model as $t=0$. To prove the concentration of the free entropy $f_{n,t}$  we adopt standard techniques such as Poincar\'e-Nash and Efron-Stein inequalities. For details, we refer to Appendix~\ref{app:concentr}.
We remind that $f_{n,t}$ is a function of the random parameters $(\bW^{*(\ell)})_{\ell=1}^L$, $(\bX^{(0)}_\mu,\zeta^{*(L)}_\mu,Z_\mu,\bA_\mu)_{\mu=1}^n$, $\ba^*$, $\bv^*$, and defined as
\begin{align}
    f_{n,t}=\frac{1}{n}\log\cZ_t=\frac{1}{n}\log\int dP(\btheta^{(L)})\EE_{\bv}\prod_\mu^n\EE_{\zeta_\mu^{(L)}} P_{out}(Y_{t\mu}|s_{t\mu}),
\end{align}
where $Y_{t\mu}=f(S_{t\mu};\bA_\mu)+\sqrt{\Delta}Z_\mu$. We will prove the concentration with respect to all random parameters in several steps. The first batch is $\psi_1=(\bX^{(0)}_\mu,\zeta^{*(L)}_\mu,Z_\mu)_{\mu=1}^n$, second is $\psi_2=(\bA_\mu)$, and lastly $\psi_3=((\bW^{*(\ell)})_{\ell=1}^L,\ba^*,\bv^*)$. For this, we use the variance decomposition 
\begin{align}\label{eq:var_free_ener}
     \mathbb{V}(f_{n,t})
    = \mathbb{E}\mathbb{V}_{\psi_1}(f_{n,t})
    +\mathbb{E}\mathbb{V}_{\psi_2}(\E_{\psi_1}f_{n,t})+
    \mathbb{V}_{\psi_3}(\mathbb E_{\psi_1,\psi_2}f_{n,t}).
\end{align}
Since all elements of set $\psi_1$ are independent Gaussian variables, we bound the first term by the Poincar\'e-Nash inequality (see Appendix~\ref{app:concentr}) as
\begin{align}\label{eq:der_psi_1}
        &\mathbb{E}\mathbb{V}_{\psi_1}\Big(\frac{1}{n}\log \cZ_t\Big)
        \leq\frac{1}{n^2}\mathbb{E}\sum_{\mu=1}^n\Big[\Big(\frac{\partial\log\cZ_t}{\partial\zeta^{*(L)}_\mu}\Big)^2
        +\Big(\frac{\partial\log\cZ_t}{\partial Z_\mu}\Big)^2
        +\sum_{i=1}^{d_{0}}\Big(\frac{\partial\log\cZ_t}{\partial X^{(0)}_{i\mu}}\Big)^2\Big].
    \end{align}
    In what follows $P_{  out}^{x}(y\mid x)$ denotes the derivative of the kernel w.r.t.\ $x$. A similar definition holds for $y$, and second derivatives. Also, for brevity we write $P_\mu=P_{out}(Y_{t\mu}|s_{t\mu})$.
    The derivatives with respect to  $\zeta^{*(L)}_\mu$ are easily bounded due to the boundedness of $f'$ and Lemma~\ref{lem:bound_P_out_der}:
    \begin{align*}
    \Big|\frac{\partial\log\cZ_t}{\partial\zeta^{*(L)}_\mu}\Big|\leq \Big|\Big\langle \frac{P_{\mu}^y}{P_\mu}\Big\rangle_t f^\prime(S_{t\mu};\bA_\mu)\sqrt{t\epsilon_L}\Big|\leq \sqrt{t\epsilon_L}C(f)(|Z_\mu|^2+1)\,.
\end{align*}
An analogous formula also holds for the derivatives w.r.t.\ $Z_\mu$. Since $\EE|Z_\mu|^k=O(1)$, we immediately obtain that the first two terms of $\eqref{eq:der_psi_1}$ are of order $n^{-1}$. 

In the last term, the elements $X^{(0)}_{i\mu}$ seem harder to access since they are hidden under the $L$ layers of non-linearity. In order to handle the corresponding derivative let us introduce, for any $1\leq \ell_1\leq \ell_2\leq L$ the $d_{\ell_2}\times d_{\ell_1-1}$ matrix
    \begin{align}
        \bB^{*[\ell_1:\ell_2]}_{\nu}:=\frac{\boldsymbol{\Phi}^{*(\ell_2-1)}_\nu\bW^{*(\ell_2)}\boldsymbol{\Phi}^{*(\ell_2-2)}_\nu\bW^{*(\ell_2-1)}\ldots\boldsymbol{\Phi}^{*(\ell_1-1)}_\nu\bW^{*(\ell_1)}}{\sqrt{d_{\ell_2-1}\ldots d_{\ell_1-1}}},
    \end{align}
    where $\boldsymbol{\Phi}^{*(\ell)}_\nu:=\text{diag}(\varphi^\prime(\bW^{*(\ell+1)}\bX_\nu^{(\ell)}/\sqrt{d_\ell}))$ is a $d_{\ell+1}\times d_{\ell+1}$ diagonal matrix. Then the derivative w.r.t.\ $X^{(0)}_{i\mu}$ becomes compact:
   \begin{align}
        \frac{\partial\log \cZ_t}{\partial X_{i\mu}^{(0)}}
        =\Big\langle\frac{P_{\mu}^y}{P_{\mu}}\Big\rangle f^\prime(S_{t,\mu};\bA_\mu)\Big(\sqrt{1-t}\frac{(\ba^{*\intercal}\bB_{\mu}^{*[1:L]})_i}{\sqrt{d_L}}+\sqrt{t}\rho_L\frac{(\bv^{*\intercal}\bB_{\mu}^{*[1:L-1]})_i}{\sqrt{d_{L-1}}}\Big)\nonumber\\
        + \sqrt{1-t}\Big\langle\frac{P_{\mu}^x}{P_{\mu}}\frac{(\ba^\intercal\bB_{\mu}^{[1:L]})_i}{\sqrt{d_L}}\Big\rangle
        +\sqrt{t}\rho_L\Big\langle\frac{P_{\mu}^x}{P_{\mu}}\frac{(\bv^\intercal\bB_{\mu}^{[1:L-1]})_i}{\sqrt{d_{L-1}}}\Big\rangle.
        \end{align}
  If all $d_\ell$ diverge with the same speed, the operator norm $d_{\ell-1}^{-1/2}\EE\|\bW^{(*\ell)}\|=O(1)$ is bounded, which leads to  $\EE\| \bB^{*[\ell_1:\ell_2]}_{\nu}\|=O(1)$ (recall $\varphi$ is Lipschitz). This, together with the Nishimori identities of Appendix~\ref{app:nishiID}, allows to bound the last term of \eqref{eq:der_psi_1} with $C(f,\varphi)n^{-1}$.
  
  The second term of \eqref{eq:var_free_ener} is bounded with the standard technique of taking a $pn$~dimensional random vector $\bA^\prime$, all elements of which are equal to those of $\bA:=\{\bA_\mu\}_\mu$ but one, which is an i.i.d. copy. Then we can evaluate the variance with respect to $\bA$ by bounding $\mathbb{E}_{\psi_1}f_{n,t}(\bA)-\mathbb{E}_{\psi_1}f_{n,t}(\bA^\prime)$ by means of Efron-Stein's inequality recalled in Appendix~\ref{app:concentr}.

The third term of \eqref{eq:var_free_ener} is the most challenging one. Denote $g:=\mathbb{E}_{\psi_1,\psi_2}f_{n,t}$. Thanks again to the Poincar\'e-Nash inequality we have
    \begin{align}
        \mathbb{E}(g-\EE_{\psi_3}g)^2\leq\sum_{\ell=1}^L\sum_{i_\ell,j_\ell}\EE\Big(\frac{\partial g}{\partial W_{i_\ell j_\ell}^{*(\ell)}}\Big)^2+\sum_{i}\EE\Big(\frac{\partial g}{\partial a_{i}^{*}}\Big)^2+\sum_{j}\EE\Big(\frac{\partial g}{\partial v_{j}^{*}}\Big)^2.
    \end{align}
The derivatives w.r.t. $a^*_i$ and $v^*_i$ are treated in the similar way as  the derivatives w.r.t. $W^{*(\ell)}_{i_{\ell} j_\ell}$, so we leave them to Appendix~\ref{app:concentr}. To give an idea of the proof, choose $\ell\in\{1,\ldots,L\}$ and write
\begin{align}\label{eq:der_W}
    \frac{\partial g}{\partial W_{ij}^{*(\ell)}}
    &=\EE_{\psi_1,\psi_2}\Big[\Big\langle\frac{P_{\mu}^y}{P_\mu}\Big\rangle f^\prime(S_{t\mu};\bA_\mu)\Big(\frac{\sqrt{1-t}(\ba^{*\intercal}\bB^{*[\ell+1:L]}_{\mu}\boldsymbol{\Phi}^{*(\ell-1)}_\mu)_{i}X^{(\ell-1)}_{j\mu}}{\sqrt{d_Ld_{\ell-1}}}\nonumber\\
    &\qquad\qquad+\frac{\sqrt{t}\rho_L(\bv^{*\intercal}\bB^{*[\ell+1:L-1]}_{\mu}\boldsymbol{\Phi}^{*(\ell-1)}_\mu)_{i}X^{(\ell-1)}_{j\mu}}{\sqrt{d_{L-1}d_{\ell-1}}}\Big)\Big].
\end{align}
We suppressed the $\mu$ summation with the normalization $n^{-1}$, since $(\psi_1, \psi_2)$ contains all the parameters that depend on $\mu$. Therefore the above expression is true for fixed arbitrary $\mu$. The two resulting terms are of similar form, so we focus only on the first one. A naive estimate yields only an $O(1)$, which is not enough. To reach the right order, we need another integration by parts. To do so we employ a similar trick as the one described in the previous section, i.e.,  a circular interpolation which replaces $\bX_\mu^{(0)}$ by an independent copy $\tilde{\bX}_\mu^{(0)}$ of it:
\begin{align*}
    \bX_\mu^{(0)}(\tau):=\bX^{(0)}_\mu\sin\tau+\tilde{\bX}^{(0)}_\mu\cos\tau\,,\quad \tau\in[0,\pi/2].
\end{align*}
Its derivative $\dot{\bX}_\mu^{(0)}(\tau)$ w.r.t. $\tau$ is independent of $\bX_\mu^{(0)}(\tau)$ and $\EE[X^{(0)}_{i\nu}\dot{X}^{(0)}_{j\mu}(\tau)]=\delta_{\mu\nu}\delta_{ij}\cos\tau$.
Respectively, in a recursive way we define $\bX_\mu^{(\ell)}(\tau):=\varphi(\bW^{*(\ell)}\bX^{(\ell-1)}_\mu(\tau)/\sqrt{d_{\ell-1})}$ and $\tilde{\bX}^{(\ell)}_\mu=\varphi(\bW^{*(\ell)}\tilde{\bX}^{(\ell-1)}_\mu/\sqrt{d_{\ell-1})}$ for $1\leq \ell\leq L$.  With this we can replace $\bX^{(\ell-1)}_{j\mu}$ in \eqref{eq:der_W} by 
\begin{align*}
    \varphi\Big(\frac{\bW_{j}^{*(\ell-1)}\tilde{\bX}^{(\ell-2)}_\mu}{\sqrt{d_{\ell-2}}}\Big)+\int_0^{\pi/2}d\tau \frac{d}{d\tau}\varphi\Big(\frac{\bW_{j}^{*(\ell-1)}\bX^{(\ell-2)}_\mu(\tau)}{\sqrt{d_{\ell-2}}}\Big)\,.
\end{align*}
The first term yields a vanishing contribution, since it is independent of everything else in \eqref{eq:der_W} and its expectation is 0. 
The second one will give  $ \sum_{p=1}^{d_0}B^{*[1:\ell-1]}_{jp}(\tau)\dot{X}_{p\mu}^{(0)}(\tau)$, which allows us  to integrate by parts w.r.t. $\dot{X}_{p\mu}^{(0)}(\tau)$. 
However, since in \eqref{eq:der_W} not only $\langle P^y_{out}/{P_{out}}\rangle$ and $S_{t\mu}$, but also each $\boldsymbol{\Phi}^{*(k)}_\mu$ (for $k=\ell-1,\ldots,L-1$) inside $\bB_\mu^{*[\ell+1:L-1]}$, depend on $\dot{X}_{p\mu}^{(0)}(\tau)$, the calculations become quite cumbersome. 
After integration by parts and some manipulations we end up with a finite number of terms, the majority of which can be bounded easily using the Nishimori identity, Lemma~\ref{lem:propertiesPout}, and a bound on the expected spectral norm $\EE\|\bB^{*[\ell_1:\ell_2]}\|$ (see Appendix~\ref{app:concentr}). However, some of the terms will have a more special structure, in particular 
\begin{align*}
    \EE\sum_{i,j}\Big|\frac{\partial g}{\partial W_{ij}^{*(\ell)}}\Big|^2\leq \frac{C(f)}{d_{\ell-1}}\EE\int_0^{\pi/2}\Tr\Big(\mathbf{E}^\intercal \text{diag}(\mathbf{e})\bG(\tau)\bG(\tau)^{\intercal} \text{diag}(\mathbf{e})\mathbf{E}\Big)d\tau+\ldots\,.
\end{align*}
Here, matrices $\mathbf{E}$ and $\bG(\tau)$   are products of several matrices of the type $\bW^{*(k)}/\sqrt{d_{k-1}}$, $\boldsymbol{\Phi}_\mu^{*(k)}(\tau)$ and $\boldsymbol{\Phi}_\mu^{*(k)}$, while $\text{diag}(\mathbf{e})$ is a diagonal matrix with vector $\mathbf{e}=\ba^{*\intercal}\bB_\mu^{*[k_1:k_2]}$ on the diagonal (for some $k_1,k_2$). As already noted, the operator norms of the mentioned matrices are all order $O(1)$, which makes the operator norms of $\mathbf{E} $ and $\bG(\tau)$ also bounded. 
This allows to evaluate such term using the fact that for a symmetric semidefinite positive matrix $\bA$ and any matrix $\bB$ we have $|\Tr(\bA\bB)|\leq \|\bB\|\Tr \bA$:
\begin{align*}
      &\frac{C(f)}{d_{\ell-1}}\EE\int_0^{\pi/2}\Tr\Big(\mathbf{E}^\intercal \text{diag}(\mathbf{e})\mathbf
      G(\tau)\mathbf{G}(\tau)^\intercal \text{diag}(\mathbf{e})\mathbf{E}\Big)d\tau
      \\
      &\qquad\leq \frac{C(f)}{d_{\ell-1}}\EE\int_0^{\pi/2}\|\mathbf{E}\|^2\|\mathbf{G}(\tau)\|^2\Tr \,\text{diag}(\mathbf{e})^2d\tau\\
      &\qquad=\frac{C(f)}{d_{\ell-1}}\EE\int_0^{\pi/2}\|\mathbf{E}\|^2\|\mathbf{G}(\tau)\|^2\|\mathbf{e}\|^2d\tau\le \frac{C(f,\varphi)}{d_{\ell-1}}\,.
  \end{align*}
  Hence, as long as all $d_\ell$ are proportional we have $\mathbb{V}(f_{n,t})\leq C(f,\varphi)(\frac{1}{n}+\frac{1}{d_m})$.

\section{Conclusion and perspectives}
We have analysed deep neural networks in the Bayes-optimal teacher-student setting. Our main result proves that in the proportional scaling regime where the input dimension $d_0$, width $(d_{\ell})$ and number of training samples $n$ all grow large at fixed ratios, from the information-theoretic viewpoint, hidden layers can be iteratively replaced by properly defined effective \emph{linear} layers. Repeating this process until the exhaustion of all inner layers, the outcome is a fundamental equivalence between deep neural networks and generalised \emph{linear} models in this setting. As a consequence, the known rigorous results on generalised linear models directly give explicit formulas for the main information-theoretic quantities of interest: mutual information and optimal generalisation error.

The main conceptual take-home message is that in order to escape this equivalence with linear models, deep neural networks must be analysed beyond the proportional regime. This means that the number of samples $n$ must grow much faster than $d_0,(d_{\ell})$. Counting the number of trainable parameters suggests that the non-trivial scaling corresponds to $n=\Theta({\rm max}\{d_{\ell}\}^2)=\Theta(d_0^2)$ when all widths are proportional and the number of layers is fixed. This regime is of a fundamentally different nature and comes with numerous technical challenges. Indeed, it has been resisting approaches based on statistical physics up to very recently, see \cite{maillard2024bayes} for the analysis in the special case of quadratic inner activation function with Gaussian weights, and \cite{barbier2025optimal} for a generic framework able to tackle any activation function and distribution of weights (which is closely related to \cite{barbier2024phase}). This beyond-proportional scaling regime is, from the authors' perspective, an important one to focus on in order to make progress in bridging the gap between theory and practice of modern neural networks.

\section*{Acknowledgements}
J.B., F.C., D.T were funded by the European Union (ERC, CHORAL, project number 101039794). Views and opinions expressed are however those of the authors only and do not necessarily reflect those of the European Union or the European Research Council. Neither the European Union nor the granting authority can be held responsible for them.

\vskip 0.2in
\bibliography{colt-main2025}

\appendix

\section{Nishimori identity}\label{app:nishiID}
The Nishimori identities are a very general set of symmetries arising in inference in the Bayes-optimal setting as a consequence of Bayes' rule. They were initially discovered in the context of the gauge theory of spin glasses \cite{nishimori01}, which possess a sub-region of their phase space, called \emph{Nishimori line}, where the most relevant thermodynamic quantities can be exactly computed, and we can generally count on replica symmetry, namely the concentration of order parameters \cite{barbier2022strong}.

To introduce them, consider a generic inference problem where a Bayes-optimal statistician observes $\bY$ that is a random function of some ground truth signal $\bX^*$: $\bY\sim P_{Y|X}(\bX^*)$. Then the following holds:
\begin{proposition}[Nishimori identity]
For any bounded function $f$ of the signal $\bX^*$, the data $\bY$ and of conditionally i.i.d. samples from the posterior $\bx^j\sim P_{X\mid Y}(\,\cdot \mid \bY)$, $j=1,2,\ldots,n$, we have that
\begin{align}
    \EE\langle f(\bY,\bX^*,\bx^2,\ldots,\bx^{n})\rangle=\EE\langle f(\bY,\bx^1,\bx^2,\ldots,\bx^{n})\rangle
\end{align}
where the bracket notation $\langle \,\cdot\,\rangle$ is used for the joint expectation over the posterior samples $(\bx^j)_{j\le n}$, $\EE$ is over the signal $\bX^*$ and data $\bY$.
\end{proposition}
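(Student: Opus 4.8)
The plan is to reduce the identity to a single exchangeability statement: in the Bayes-optimal setting the pair $(\bX^*,\bY)$ has exactly the same joint law as the pair $(\bx^1,\bY)$, where $\bx^1\sim P_{X\mid Y}(\,\cdot\mid\bY)$ is a fresh posterior sample. First I would write the joint law of $(\bX^*,\bY)$ as $dP_X(\bx^*)\,P_{Y\mid X}(d\by\mid\bx^*)$ and apply Bayes' rule to rewrite it as $P_Y(d\by)\,P_{X\mid Y}(d\bx^*\mid\by)$; the latter is, by definition, the law of $(\bx^1,\bY)$ when $\bY$ is marginally distributed according to $P_Y$ and $\bx^1$ is drawn from the posterior given $\bY$. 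Hence $\bX^*$ and $\bx^1$ are interchangeable once one conditions on $\bY$.

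Second, I would note that, conditionally on $\bY$, the replicas $\bx^2,\dots,\bx^n$ are i.i.d.\ draws from $P_{X\mid Y}(\,\cdot\mid\bY)$ and, by construction, independent of $\bX^*$ given $\bY$ (the extra replicas are generated from the data alone). Combining this with the previous step, the conditional law given $\bY$ of the $n$-tuple $(\bX^*,\bx^2,\dots,\bx^n)$ coincides with that of $(\bx^1,\bx^2,\dots,\bx^n)$: in both cases one has $n$ conditionally i.i.d.\ samples from the posterior. Integrating the bounded test function $f(\bY,\,\cdot\,,\dots,\,\cdot\,)$ against these identical conditional laws and then averaging over $\bY$ yields $\EE\langle f(\bY,\bX^*,\bx^2,\dots,\bx^n)\rangle=\EE\langle f(\bY,\bx^1,\dots,\bx^n)\rangle$, which is the claim. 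Boundedness of $f$ is used only to ensure all integrals are finite and to justify the interchange of integrations.

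For the concrete models considered here the same argument can be carried out by a direct computation: writing $\EE\langle\,\cdot\,\rangle$ explicitly as an integral over the teacher variables $(\btheta^{*(L)},\bxi^*)$, the data $\mathcal{D}_L$ and the posterior replicas, the factor $dP(\btheta^{*(L)})\prod_\mu dP_\theta(\xi_\mu^*)P_{out}(Y_\mu\mid\cdots)$ attached to the teacher is, up to the normalisation $\cZ_L$, formally identical to the factor attached to each replica, so relabelling $\btheta^{*(L)}$ with one of the replica variables inside the integral leaves it invariant. The only point requiring care is the measure-theoretic meaning of ``the posterior'' in this continuous high-dimensional setting — one must fix a regular conditional distribution and check Bayes' rule in the form used above — but since all densities here are explicit (Gaussian priors, explicit kernel $P_{out}$) this is immediate; I expect no genuine obstacle. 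More generally the same computation shows that $(\bX^*,\bx^1,\dots,\bx^n)$ is exchangeable conditionally on $\bY$, the stated identity being the special case that swaps $\bX^*$ with $\bx^1$.
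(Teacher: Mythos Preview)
Your proposal is correct and follows exactly the approach the paper indicates: the paper's own proof simply states that ``the proof based on Bayes' rule is elementary'' and refers to \cite{Lelarge2017FundamentalLO}, and your argument spells out precisely this Bayes-rule computation showing that $(\bX^*,\bY)$ and $(\bx^1,\bY)$ have the same law, hence the conditional exchangeability of $\bX^*$ with any posterior replica.
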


\begin{proof}
    The proof based on Bayes' rule is elementary, see, e.g., \cite{Lelarge2017FundamentalLO}.
\end{proof}

\section{Lipschitz functions of sub-Gaussian random variables}\label{app:Lemmas}
In this section we collect Lemmas for the proof of Theorem \ref{thm:reduction}. The first one hereby illustrates how to use the notion of orthogonality, i.e. independence, between Gaussians to expand nonlinear functions of them. This will be needed to handle the activation functions.

\begin{lemma}[Orthogonal approximation]\label{lem:approx}
Let $\psi\in C^2(\mathbb{R})$ be an odd function with bounded first and second derivatives, and $g_1,g_2\in C^1(\mathbb{R})$ where $g_1$ is a Lipschitz function, and $g_2,g_2'$ have at most polynomial growth. Let $z_1$ and $z_2$ be two centered Gaussian random variables with covariance $\EE z_i z_j=C_{ij}$, for $i,j=1,2$. Let $\Delta_{12}=C_{12}/C_{22}$. Then, for some constants $L_1,L_2>0$
\begin{align}
\label{eq:approx1}
    &|\E  \psi(z_1)g_2(z_2)-C_{12}\E \psi'(z_1)\E g_2'(z_2)|\leq L_1\Delta_{12}^2,\\
\label{eq:approx2}
    &|\E  g_1(z_1)g_2(z_2)-\E g_1(z_1)\E g_2(z_2)|\leq L_2\Delta_{12}.
\end{align}
\end{lemma}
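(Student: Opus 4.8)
\textbf{Proof plan for Lemma~\ref{lem:approx}.}

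The plan is to obtain both bounds by conditioning on $z_2$ and writing $z_1$ as its conditional mean plus an independent Gaussian fluctuation. Concretely, write $z_1 = \Delta_{12} z_2 + w$, where $w$ is a centered Gaussian independent of $z_2$ with variance $C_{11} - C_{12}^2/C_{22} \ge 0$; this is the standard Gaussian conditioning identity, and one checks immediately that $\EE z_1 z_2 = \Delta_{12} C_{22} = C_{12}$ as required. The key point is that all the $z_2$-dependence of $z_1$ is now carried by the single term $\Delta_{12} z_2$, which is small precisely when $\Delta_{12}$ is small, so the two quantities we want to bound become first- or second-order Taylor remainders in the parameter $\Delta_{12}$.

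For \eqref{eq:approx2}, define $h(\delta) := \EE g_1(\delta z_2 + w) g_2(z_2)$, so the left side equals $|h(\Delta_{12}) - h(0)|$ since $h(0) = \EE g_1(w)\,\EE g_2(z_2) = \EE g_1(z_1)\,\EE g_2(z_2)$ by independence and the fact that $w$ has the same law as $z_1$ when $\Delta_{12}=0$ only in distribution — more carefully, $h(0)=\EE g_1(w)\EE g_2(z_2)$ and separately $\EE g_1(z_1)=\EE g_1(w)$ only if $\mathrm{Var}(z_1)=\mathrm{Var}(w)$, which fails in general; so instead I would directly expand $\EE g_1(z_1)g_2(z_2) - \EE g_1(z_1)\EE g_2(z_2) = \EE[(g_1(\Delta_{12}z_2+w) - g_1(w))\,g_2(z_2)] + \mathrm{Cov}(g_1(w), g_2(z_2))$, and the covariance term vanishes by independence of $w$ and $z_2$. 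Then by the Lipschitz property of $g_1$ with constant $\mathrm{Lip}(g_1)$, $|g_1(\Delta_{12}z_2+w)-g_1(w)| \le \mathrm{Lip}(g_1)\,|\Delta_{12}|\,|z_2|$, so the remaining term is bounded by $\mathrm{Lip}(g_1)\,|\Delta_{12}|\,\EE|z_2 g_2(z_2)|$, which is finite because $g_2$ has polynomial growth and $z_2$ is Gaussian; this gives $L_2 = \mathrm{Lip}(g_1)\,\EE|z_2 g_2(z_2)|$.

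For \eqref{eq:approx1}, the same conditioning gives $\EE\psi(z_1)g_2(z_2) = \EE[\,\EE_w \psi(\Delta_{12}z_2 + w)\; g_2(z_2)\,]$. Define $G(x) := \EE_w \psi(x + w)$, which inherits $C^2$ regularity with $G'(x) = \EE_w\psi'(x+w)$ and $G''$ bounded by $\|\psi''\|_\infty$; also $G$ is odd because $\psi$ is odd and $w$ is symmetric, hence $G(0)=0$. Taylor-expanding $G$ to second order around $0$: $G(\Delta_{12}z_2) = G'(0)\Delta_{12}z_2 + \tfrac12 G''(\xi)\Delta_{12}^2 z_2^2$ for some intermediate point, with $G'(0) = \EE_w\psi'(w) = \EE\psi'(z_1)$ (again using that $w \overset{d}{=} z_1$ only when $\Delta_{12}=0$; more precisely $G'(0)=\EE\psi'(z_1)$ holds because $G'(0)=\EE_w\psi'(w)$ and we must additionally note $\EE\psi'(z_1)$ should be interpreted with $z_1$ at the true covariance — this is the one subtlety to handle, namely that $\EE\psi'(z_1)\ne\EE_w\psi'(w)$ in general). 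I would resolve this by instead expanding $\EE_w\psi'(\cdot+w)$ once more, or more cleanly by working throughout with the decomposition and only claiming the bound with $\EE\psi'(z_1)$ replaced by its correct value; since the stated lemma uses $\EE\psi'(z_1)$, I would check that the difference $|\EE\psi'(z_1) - \EE_w\psi'(w)|$ is itself $O(\Delta_{12}^2)$ — indeed $\EE\psi'(z_1) = \EE G'(\Delta_{12}z_2) = G'(0) + O(\Delta_{12}^2\,\EE z_2^2)$ by the same Taylor argument applied to $G'$, since $G'$ is also even with $G'(0)$ its value and bounded second derivative $G'''$ bounded by $\|\psi'''\|$... but $\psi\in C^2$ only, so instead I use that $G''$ is bounded, giving $|\EE\psi'(z_1)-G'(0)|\le \tfrac12\|\psi''\|_\infty\Delta_{12}^2\EE z_2^2$. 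Combining: $\EE\psi(z_1)g_2(z_2) = G'(0)\Delta_{12}\EE[z_2 g_2(z_2)] + O(\Delta_{12}^2)$, and by Gaussian integration by parts $\EE[z_2 g_2(z_2)] = C_{22}\EE g_2'(z_2)$, so the main term is $G'(0)\Delta_{12}C_{22}\EE g_2'(z_2) = \EE\psi'(z_1)\cdot C_{12}\cdot\EE g_2'(z_2)$ up to $O(\Delta_{12}^2)$ corrections, which is exactly \eqref{eq:approx1} with $L_1$ depending on $\|\psi''\|_\infty$, $\EE z_2^2$, and the polynomial-growth constants of $g_2, g_2'$.

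The main obstacle is bookkeeping the error terms carefully: one must confirm every remainder really is $O(\Delta_{12}^2)$ (resp. $O(\Delta_{12})$) with constants independent of the covariance matrix except through the explicitly allowed quantities, and in particular that the Taylor remainders involving $z_2^2$ (and $z_2 g_2(z_2)$, $z_2 g_2'(z_2)$) have finite expectation — this is where the polynomial-growth hypotheses on $g_2, g_2'$ and the boundedness of $\psi', \psi''$ are used. The subtlety flagged above — that $\EE\psi'(z_1)$ as written in the statement differs from the naturally appearing $\EE_w\psi'(w)$ — is not a real gap but needs the extra Taylor step on $G'$, which costs one more $O(\Delta_{12}^2)$ term absorbed into $L_1$; alternatively, since Gaussian integration by parts gives $\EE\psi(z_1)g_2(z_2)=C_{12}\EE\psi'(z_1)\EE g_2'(z_2)$ would be exact if $\psi$ were linear, the whole content is controlling the deviation of $\psi$ from its linearization, which is cleanly captured by the $C^2$ bound on $\psi$ and odd symmetry killing the first-order-in-$w$ cross term.
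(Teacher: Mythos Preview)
Your approach is the same as the paper's: write $z_1=\Delta_{12}z_2+w$ with $w\perp z_2$ (the paper calls $w$ by $z_{1\perp 2}$), Taylor-expand in the small shift $\Delta_{12}z_2$, use oddness of $\psi$ to kill the zeroth order, and apply Gaussian integration by parts $\EE z_2 g_2(z_2)=C_{22}\EE g_2'(z_2)$. Two points in your execution need repair, and both are exactly where the paper does a small extra step.

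For \eqref{eq:approx2}, your ``corrected'' identity is still off. Expanding the right side of your displayed equality gives $\EE g_1(z_1)g_2(z_2)-\EE g_1(w)\,\EE g_2(z_2)$, not $\EE g_1(z_1)g_2(z_2)-\EE g_1(z_1)\,\EE g_2(z_2)$; you are missing the term $[\EE g_1(w)-\EE g_1(z_1)]\,\EE g_2(z_2)$. This is the same defect you flagged in your $h(\delta)$ attempt, and it did not disappear. The fix is immediate: $|\EE g_1(w)-\EE g_1(z_1)|=|\EE g_1(w)-\EE g_1(\Delta_{12}z_2+w)|\le \mathrm{Lip}(g_1)|\Delta_{12}|\,\EE|z_2|$, so the missing term is $O(\Delta_{12})$ and gets absorbed into $L_2$. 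The paper states this explicitly (``to replace $z_{1\perp 2}$ by $z_1$ in $g_1$ one uses the same strategy backward'').

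For \eqref{eq:approx1}, your claim $|\EE\psi'(z_1)-G'(0)|\le \tfrac12\|\psi''\|_\infty\Delta_{12}^2\EE z_2^2$ is not justified with only $\psi\in C^2$: a second-order remainder for $G'$ would require $G'''$, i.e.\ $\psi\in C^3$. Knowing $G''$ is bounded and $G''(0)=0$ (by oddness of $G''$) is not enough. Fortunately you do not need this: since the main term carries a prefactor $C_{12}=\Delta_{12}C_{22}$, it suffices that $|\EE\psi'(z_1)-G'(0)|\le \|\psi''\|_\infty|\Delta_{12}|\,\EE|z_2|=O(\Delta_{12})$, which follows from first-order Taylor on $G'$. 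This is precisely the paper's step $|\EE\psi'(z_{1\perp 2})-\EE\psi'(z_1)|\le \tilde K\Delta_{12}$.
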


\begin{proof}
Define $z_{1\perp 2}=z_1-z_2C_{12}/C_{22}=z_1-\Delta_{12}z_2$, which rewrites as $z_1=z_{1\perp2}+\Delta_{12}z_{2}$. This is nothing but Grahm-Schmidt orthogonalization procedure, used to make $z_{1\perp 2}$ independent on $z_2$. Using a Taylor expansion with integral remainder we have
\begin{align*}
    \E \psi(z_{1})g_2(z_2)-\Delta_{12}\EE  \psi'(z_{1\perp 2})\EE z_2g_2(z_2)=\Delta_{12}^2\int_0^1(1-s)\EE \psi''(z_{1\perp2}+s\Delta_{12}z_2)z_2^2 g_2(z_2)  ds
\end{align*}where we have used oddity of $\psi$ to eliminate the zero-th order of the expansion. Now, using the fact that $\psi$ has second derivative uniformly bounded by a constant $K$, we get
\begin{align}
    |\EE \psi''(z_{1\perp2}+s\Delta_{12}z_2)z_2^2 g_2(z_2)|\leq K\EE z_2^2 |g_2(z_2)|\leq\bar{K}
\end{align}thanks to the at most polynomial growth of $g_2$, for a proper constant $\bar{K}$. Then we reuse the same argument for $\EE  \psi'(z_{1\perp 2})$:
\begin{align}
    |\EE  \psi'(z_{1\perp 2})-\EE  \psi'(z_{1})|\leq
    \Delta_{12}\int_0^1 \EE |\psi''(z_1-sz_2\Delta_{12})z_{2}| ds\leq \tilde K \Delta_{12}\,,
\end{align}
where we use that the boundedness of the derivatives of $\psi$, for a proper constant $\tilde K$. Together with the integration by parts $\EE z_2g_2(z_2)=C_{22}\EE g'_2(z_2)$, this leads directly to \eqref{eq:approx1}.

For \eqref{eq:approx2} instead, we have
\begin{align}
    |\E  g_1(z_1)g_2(z_2)-\E  g_1(z_{1\perp2})\E g_2(z_2)|\leq\Delta_{12}\int_0^1\EE |g_1'(z_{1\perp2}+s\Delta_{12}z_2) z_2 g_2(z_2)|\,ds\,.
\end{align} In order to bound the remainder it suffices to use the fact that $g_1$ is Lipschitz, and the at most polynomial growth of $g_2,g_2'$. Then, to replace $z_{1\perp2}$ by $z_{1}$ in $g_1$ one uses the same strategy backward, producing the same order of remainder.
\end{proof}

Following this, we need to make sure that the \enquote{quasi-orthogonality} condition of the inputs, namely the exponential concentration property
\begin{align}
\mathbb{P}\Big(\Big|\frac{\bX^{(0)}_\mu\cdot\bX^{(0)}_\nu}{d_0}-\delta_{\mu\nu}\Big|\geq\epsilon\Big)\leq C\exp\big(-c\epsilon^2\big)
\end{align}for any $\epsilon>0$ and some $C,c>0$, propagates to all the post activations in some form.
This is the aim of the next Lemmas, which are stated for a general setting.

\begin{lemma}[Concentration propagation]\label{lem:concentration_propagation}
    Consider two deterministic vectors $\bX,\bY\in\mathbb{R}^d$ with Euclidean norms $\sigma_X$ and $\sigma_Y$ respectively, and $\bW\in\mathbb{R}^{p\times d}$ a random matrix with i.i.d.\ centred sub-Gaussian entries. Consider two Lipschitz functions $\phi,\psi\in C^1(\mathbb{R})$, applied element-wise to vectors. Then, for all $\epsilon>0$
    \begin{align}
        \mathbb{P}\Big(\Big|\frac{\phi(\bW\bX)^\intercal\psi(\bW\bY)}{p}-
        \EE \frac{\phi(\bW\bX)^\intercal\psi(\bW\bY)}{p}\Big|\geq \epsilon\Big)\leq2\exp\Big[-
        \frac{p\epsilon^2}{2L\sigma_X\sigma_Y(\epsilon+L\sigma_X\sigma_Y)}
        \Big]
    \end{align}for some $L>0$ depending on $\phi,\psi$.
\end{lemma}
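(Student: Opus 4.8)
The plan is to use the fact that the rows of $\bW$ are i.i.d., so that the quantity in question is an empirical average of i.i.d. terms, and then to invoke a Bernstein-type inequality. Writing $\bW_i^\intercal$ for the $i$-th row of $\bW$ and setting $Z_i:=\phi(\bW_i^\intercal\bX)\,\psi(\bW_i^\intercal\bY)$, we have $\frac{1}{p}\phi(\bW\bX)^\intercal\psi(\bW\bY)=\frac{1}{p}\sum_{i=1}^p Z_i$ with $(Z_i)_{i\le p}$ i.i.d. It thus suffices to control the sub-exponential Orlicz norm of a single $Z_i$ and then to apply a concentration bound for sums of independent centred sub-exponential variables.

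First I would bound the two factors separately. Since the entries of $\bW_i$ are i.i.d., centred and sub-Gaussian, the linear form $\bW_i^\intercal\bX=\sum_j W_{ij}X_j$ is sub-Gaussian with $\|\bW_i^\intercal\bX\|_{\psi_2}\le c\,\|\bX\|_2=c\,\sigma_X$, the constant $c$ being the $\psi_2$-norm of a single entry of $\bW$; similarly $\|\bW_i^\intercal\bY\|_{\psi_2}\le c\,\sigma_Y$. Because $\phi$ is Lipschitz, $\phi(\bW_i^\intercal\bX)$ is a Lipschitz image of a sub-Gaussian variable, hence itself sub-Gaussian with $\|\phi(\bW_i^\intercal\bX)\|_{\psi_2}\le c'(\mathrm{Lip}(\phi)\,\sigma_X+|\phi(0)|)$; the additive term $\phi(0)$ vanishes in the odd case relevant to the application and is in general harmlessly absorbed into the final constant. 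The same holds for $\psi(\bW_i^\intercal\bY)$. Then, using the elementary fact that a product of two sub-Gaussian variables is sub-exponential with $\|AB\|_{\psi_1}\le\|A\|_{\psi_2}\|B\|_{\psi_2}$ (no independence of $A,B$ needed), we get $\|Z_i\|_{\psi_1}\le L\,\sigma_X\sigma_Y$ for a constant $L$ depending only on $\phi,\psi$ and the entrywise law of $\bW$; centring $Z_i$ changes this by at most a factor $2$.

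Bernstein's inequality for the sum of $p$ independent centred sub-exponential random variables then gives, with $K:=L\sigma_X\sigma_Y$,
\begin{align*}
\mathbb{P}\Big(\Big|\frac{1}{p}\sum_{i=1}^p(Z_i-\EE Z_i)\Big|\ge\epsilon\Big)\le 2\exp\Big(-\tilde c\,p\,\min\Big\{\frac{\epsilon^2}{K^2},\,\frac{\epsilon}{K}\Big\}\Big)
\end{align*}
for a universal constant $\tilde c$, and it only remains to rewrite the exponent in the stated form. Since $\frac{1}{2}\min\{\epsilon^2/K^2,\epsilon/K\}\le \epsilon^2/\big(K(\epsilon+K)\big)\le\min\{\epsilon^2/K^2,\epsilon/K\}$ for every $\epsilon>0$, one has $\min\{\epsilon^2/K^2,\epsilon/K\}\ge \epsilon^2/\big(K(\epsilon+K)\big)$, and after enlarging $L$ (equivalently $K$) by the universal factor needed to absorb $\tilde c$ one lands exactly on $2\exp\big[-p\epsilon^2/\big(2L\sigma_X\sigma_Y(\epsilon+L\sigma_X\sigma_Y)\big)\big]$, as claimed.

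The only delicate points are bookkeeping ones. The key structural observation is that the product structure forces $Z_i$ to be merely sub-exponential rather than sub-Gaussian, so one cannot hope to view $\bW\mapsto \phi(\bW\bX)^\intercal\psi(\bW\bY)/p$ as a Lipschitz function of $\bW$ and must instead pass through the row-wise decomposition together with Bernstein. The other point to check is that the sub-exponential parameter scales as the product $\sigma_X\sigma_Y$ rather than, say, $\sigma_X^2+\sigma_Y^2$: this is exactly what the bound $\|AB\|_{\psi_1}\le\|A\|_{\psi_2}\|B\|_{\psi_2}$ delivers, after which homogeneity in $(\sigma_X,\sigma_Y)$ is preserved since only the combination $L\sigma_X\sigma_Y$ ever appears. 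No concentration machinery beyond sub-Gaussian/sub-exponential calculus and Bernstein's inequality is needed.
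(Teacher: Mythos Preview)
Your proof is correct and follows essentially the same route as the paper: both exploit the i.i.d.\ row structure to write the quantity as an average of i.i.d.\ terms, observe that each term is a product of two sub-Gaussians (hence sub-exponential with parameter $\sim L\sigma_X\sigma_Y$), and deduce the Bernstein-type tail. The only cosmetic difference is that the paper carries out the Chernoff method by hand (exponential Markov, MGF expansion via moment bounds, then optimizing the free parameter $s$), whereas you invoke the packaged Bernstein inequality in Orlicz-norm language; the content is identical.
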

\begin{proof}
    Define for brevity $\alpha=\bW_1\cdot\bX$, $\beta=\bW_1\cdot\bY$. We start by considering one side of the bound, as the other one follows from the same arguments. Using exponential Markov's inequality we get
    \begin{align}
        \mathbb{P}
        \Big(\frac{\phi(\bW\bX)^\intercal\psi(\bW\bY)}{p}-
        \EE \frac{\phi(\bW\bX)^\intercal\psi(\bW\bY)}{p}
        \geq \epsilon\Big)\leq e^{-s(\epsilon+\EE\phi(\alpha)\psi(\beta))}\Big[\EE e^{s\frac{\phi(\alpha)\psi(\beta)}{p}}\Big]^p
    \end{align}for any $s>0$.
    Let us introduce the generating function
    \begin{align}
        g_p(s):=&\,p\log\EE e^{s\frac{\phi(\alpha)\psi(\beta)}{p}}\leq p\Big(\EE e^{s\frac{\phi(\alpha)\psi(\beta)}{p}}-1\Big)\nonumber\\
        =&\,s\EE\phi(\alpha)\psi(\beta)
        +p\sum_{k=2}^\infty
        \frac{s^k}{k!p^k}\EE \phi(\alpha)^k\psi(\beta)^k\,.
    \end{align}Now we can use the fact that $\phi,\psi$ are Lipschitz functions applied to sub-Gaussian random variables. Hence $\phi(\alpha),\psi(\beta)$ are sub-Gaussian random variables. As a consequence, there exists a $L$ such that
    \begin{align}
        g_p(s)-s\EE\phi(\alpha)\psi(\beta)\leq \sum_{k=2}^\infty
        \frac{s^k(L\sigma_X\sigma_Y)^k}{2p^{k-1}}=\frac{(s L\sigma_X\sigma_Y)^2}{2p}\frac{1}{1-\frac{s L\sigma_X\sigma_Y}{p}}\,,
    \end{align}which restricts us to choose $s<p/(L \sigma_X\sigma_Y)$. Therefore, our bound on the probability becomes
    \begin{align}
        \mathbb{P}
        \Big(\frac{\phi(\bW\bX)^\intercal\psi(\bW\bY)}{p}-
        \EE \frac{\phi(\bW\bX)^\intercal\psi(\bW\bY)}{p}
        \geq \epsilon\Big)\leq e^{-s\epsilon+\frac{(s L \sigma_X\sigma_Y)^2}{2p}\frac{1}{1-\frac{s L \sigma_X\sigma_Y}{p}}}\,.
    \end{align}
    The result then follows from the choice $s=p\epsilon/(L \sigma_X\sigma_Y(\epsilon+L \sigma_X\sigma_Y))$.
\end{proof}

\begin{lemma}[Moments bounds]\label{lem:moments_bounds}
    Let $k$ be a positive integer, and $\bW,\phi,\psi,\bX,\bY$ as in Lemma \ref{lem:concentration_propagation}. Then 
    \begin{align}\label{eq:moment_control_1}
        \EE\Big|\frac{\phi(\bW\bX)^\intercal\psi(\bW\bY)}{p}-\EE\frac{\phi(\bW\bX)^\intercal\psi(\bW\bY)}{p}\Big|^k\leq \frac{C_{k}(\sigma_X\sigma_Y)^k}{p^{k/2}}\,,
    \end{align}for a positive constant $C_{k}>0$ depending on $\phi,\psi$. Furthermore, if $\psi\in C^2(\mathbb{R})$ is an odd function with bounded first and second derivatives, then
    \begin{align}\label{eq:moment_control_2}
        \EE\Big|\frac{\phi(\bW\bX)^\intercal\psi(\bW\bY)}{p}\Big|^k\leq \frac{C'_{k}(\sigma_X\sigma_Y)^k}{p^{k/2}}+L_{k}|\bX^\intercal\bY|^k+L'_{k}\Big(\frac{|\bX^\intercal\bY|}{\|\bY\|^{2}}\Big)^{2k}
    \end{align}for some positive constants $C'_{k},L_{k},L'_{k}>0$ depending on $\phi,\psi$. 
    %\commentoe{The constants depend on the variance of the $W$'s entries. Should mention that the law of $W$ is independent of $p$.}
\end{lemma}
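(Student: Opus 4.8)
The plan is to derive both inequalities from the sub‑exponential concentration bound of Lemma~\ref{lem:concentration_propagation} together with the orthogonal–expansion estimate of Lemma~\ref{lem:approx}. For \eqref{eq:moment_control_1}, set $Z:=\phi(\bW\bX)^\intercal\psi(\bW\bY)/p-\EE[\,\cdot\,]$ and $a:=L\sigma_X\sigma_Y$ with $L$ the constant of Lemma~\ref{lem:concentration_propagation}. Starting from $\EE|Z|^k=\int_0^\infty k\,t^{k-1}\,\mathbb{P}(|Z|\ge t)\,dt$ and the tail bound $\mathbb{P}(|Z|\ge t)\le 2\exp\!\big(-pt^2/(2a(t+a))\big)$, I split the integral at $t=a$: for $t\le a$, $t+a\le 2a$ makes the exponent at least $pt^2/(4a^2)$ (a Gaussian regime), and for $t\ge a$, $t+a\le 2t$ makes it at least $pt/(4a)$ (an exponential regime). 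Extending each piece to $[0,\infty)$ and evaluating the resulting Gamma integrals gives $\EE|Z|^k\le c_k\,a^k p^{-k/2}+c_k'\,a^k p^{-k}$; since $p\ge 1$ the second term is dominated by the first, and absorbing $L^k$ and numerical factors into one constant yields $\EE|Z|^k\le C_k(\sigma_X\sigma_Y)^k p^{-k/2}$.

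For \eqref{eq:moment_control_2}, write $\phi(\bW\bX)^\intercal\psi(\bW\bY)/p$ as its centered part plus its mean $m:=\EE[\phi(\bW\bX)^\intercal\psi(\bW\bY)/p]$ and use $|u+v|^k\le 2^{k-1}(|u|^k+|v|^k)$; the centered part is controlled by \eqref{eq:moment_control_1} and supplies the first term. For $m$, since the rows of $\bW$ are i.i.d.\ one has $m=\EE[\phi(\bW_1\cdot\bX)\psi(\bW_1\cdot\bY)]$, and the scalars $z_1:=\bW_1\cdot\bY$, $z_2:=\bW_1\cdot\bX$ are centered (jointly Gaussian when $\bW$ is Gaussian) with $\EE z_1z_2=\bX^\intercal\bY$, $\EE z_2^2=\|\bX\|^2$ once the entry variance is normalized to one. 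Applying Lemma~\ref{lem:approx}\eqref{eq:approx1} with the odd $C^2$ function $\psi$ acting on $z_1$ and $g_2=\phi$ (admissible: $\phi\in C^1$ is Lipschitz and $\phi'$ bounded, so $\phi,\phi'$ have polynomial growth) gives $|m-(\bX^\intercal\bY)\,\EE\psi'(z_1)\,\EE\phi'(z_2)|\le L_1(|\bX^\intercal\bY|/\|\bX\|^2)^2$; bounding $|\EE\psi'(z_1)|\le\|\psi'\|_\infty$, $|\EE\phi'(z_2)|\le\|\phi'\|_\infty$ and raising to the $k$‑th power (again via $|u+v|^k\le 2^{k-1}(|u|^k+|v|^k)$) produces the last two terms of \eqref{eq:moment_control_2}. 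The denominator in the last term may be taken to be $\|\bX\|^2$ or $\|\bY\|^2$ according to which variable one orthogonalizes against; in the applications both norms concentrate and are comparable.

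The statement is essentially mechanical; the points that need care are (i) in \eqref{eq:moment_control_1}, keeping both regimes in the split so that the final exponent of $p$ is exactly $-k/2$ rather than weaker, and (ii) the bound on $m$, which rests on the joint Gaussianity of $(z_1,z_2)$ assumed by Lemma~\ref{lem:approx} — this holds for Gaussian $\bW$, the relevant case for the teacher weights. For that case one can also see the mechanism directly: writing $\bX=\bX_{\perp}+(\bX^\intercal\bY/\|\bY\|^2)\bY$ with $\bX_{\perp}\perp\bY$, the variable $\bW_1\cdot\bX_{\perp}$ is independent of $\bW_1\cdot\bY$, so oddity of $\psi$ kills the leading term $\EE[\psi(\bW_1\cdot\bY)\phi(\bW_1\cdot\bX_{\perp})]$ and the Lipschitz bound on $\phi$ leaves a remainder bounded by $\|\psi'\|_\infty\,L_\phi\,|\bX^\intercal\bY|$, which already yields \eqref{eq:moment_control_2}.
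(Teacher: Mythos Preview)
Your proof is correct and follows essentially the same route as the paper: tail integration with the split at $t=L\sigma_X\sigma_Y$ into Gaussian and exponential regimes for \eqref{eq:moment_control_1}, and the decomposition into centered part plus mean followed by Lemma~\ref{lem:approx} for \eqref{eq:moment_control_2}. You are in fact slightly more careful than the paper in flagging that Lemma~\ref{lem:approx} needs joint Gaussianity of $(z_1,z_2)$ (hence Gaussian $\bW$, not merely sub-Gaussian as inherited from Lemma~\ref{lem:concentration_propagation}), and in noting that the orthogonalization direction determines whether $\|\bX\|^2$ or $\|\bY\|^2$ appears in the denominator---the paper states $\|\bY\|^2$, which corresponds to orthogonalizing $z_2$ against $z_1$ rather than the direction written in the proof of Lemma~\ref{lem:approx}.
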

\begin{proof} The first inequality follows from the tail integration. Define the shortcut notation $\phi_\bX=\phi(\bW\bX)$, and similarly for $\psi_\bX$. Then
\begin{align}
    \EE\Big|\frac{\phi_\bX^\intercal\psi_\bY}{p}-\EE\frac{\phi_\bX^\intercal\psi_\bY}{p}\Big|^k&=k\int_0^\infty dt\,t^{k-1}\mathbb{P}\Big(\Big|\frac{\phi_\bX^\intercal\psi_\bY}{p}-\EE\frac{\phi_\bX^\intercal\psi_\bY}{p}\Big|\geq t\Big)\nonumber\\
    &\leq 2k\int_0^\infty dt\,t^{k-1}e^{-\frac{pt^2}{2L\sigma_X\sigma_Y(t+L\sigma_X\sigma_Y)}}.
\end{align}
We split the integral into two contributions and we bound them separately. The first one is
\begin{align}
    \int_0^{L\sigma_X\sigma_Y} dt\,t^{k-1}e^{-\frac{pt^2}{2L\sigma_X\sigma_Y(t+L\sigma_X\sigma_Y)}}\leq
    \int_0^{L\sigma_X\sigma_Y} dt\,t^{k-1}e^{-\frac{pt^2}{4(L\sigma_X\sigma_Y)^2}}.
\end{align}It suffices then to extend the integral to $\infty$ (the integrand is positive), and to perform the change  of variables $t\mapsto t L\sigma_X\sigma_Y/\sqrt{p}$ to show that
\begin{align}
    \int_0^{L\sigma_X\sigma_Y} dt\,t^{k-1}e^{-\frac{pt^2}{2L\sigma_X\sigma_Y(t+L\sigma_X\sigma_Y)}}\leq \tilde C_{k}\frac{(\sigma_X\sigma_Y)^k}{p^{k/2}}\,.
\end{align}
The second piece of the integral is instead
\begin{align}
    \int_{L\sigma_X\sigma_Y}^{\infty} dt\,t^{k-1}e^{-\frac{pt^2}{2L\sigma_X\sigma_Y(t+L\sigma_X\sigma_Y)}}\leq
    \int_{L\sigma_X\sigma_Y}^{\infty} dt\,t^{k-1}e^{-\frac{pt}{4L\sigma_X\sigma_Y}}\leq \hat{C}_{k}\frac{(\sigma_X\sigma_Y)^k}{p^k}.
\end{align}One then just sets $C_{k}=2k(\tilde C_{k}+\hat C_{k})$ and \eqref{eq:moment_control_1} is proved.

Equation \eqref{eq:moment_control_2} can be derived from \eqref{eq:moment_control_1} and Lemma \ref{lem:approx} as follows. Firstly, by triangular and Jensen's inequality respectively  we get:
\begin{align}
    \EE\Big|\frac{\phi_\bX^\intercal\psi_\bY}{p}\Big|^k&\leq \EE\Big(\Big|\frac{\phi_\bX^\intercal\psi_\bY}{p}-\EE\frac{\phi_\bX^\intercal\psi_\bY}{p}\Big|+\Big|\EE\frac{\phi_\bX^\intercal\psi_\bY}{p}\Big|\Big)^k\nonumber\\
    &\leq 2^{k-1}\EE\Big|\frac{\phi_\bX^\intercal\psi_\bY}{p}-\EE\frac{\phi_\bX^\intercal\psi_\bY}{p}\Big|^k+2^{k-1}\Big|\EE\frac{\phi_\bX^\intercal\psi_\bY}{p}\Big|^k\,.
\end{align}
Secondly, the first term on the r.h.s.\ of the above can be bounded using \eqref{eq:moment_control_1}. Thirdly, Lemma \ref{lem:approx} on $\EE\phi_\bX^\intercal\psi_\bY/p$ readily yields the statement.
\end{proof}

\section{Bounds for post-activations moments}
Note that Lemmas \ref{lem:concentration_propagation} and \ref{lem:moments_bounds} hold for any couple of vectors $\bX,\bY$. In particular, \eqref{eq:moment_control_2} becomes pointless when $\bX=\bY$, because the bound does not vanish. On the contrary, in that case equation \eqref{eq:moment_control_1} gives us the concentration of the norms of the post activations $\varphi(\bW\bX)$ when choosing $\phi=\psi=\varphi$. When $\bX$ and $\bY$ are instead quasi-orthogonal, \eqref{eq:moment_control_2} is the reflection in terms of moments of the propagation of such quasi-orthogonality property to the post-activations $\varphi(\bW\bX),\varphi(\bW\bY)$.

As a consequence of these Lemmas one has  the desired moment control for all post-activations:
\begin{lemma}[Post-activations moments]\label{lem:moments_post_activations}
 Define the sequence of coefficients:
    \begin{align}
         \sigma_0=1\,,\,\sigma_\ell=\EE\varphi^2(Z\sqrt{\sigma_{\ell-1}})\,,\quad \rho_\ell=\EE\varphi'(Z\sqrt{
        \sigma_{\ell-1}})
    \end{align}where $\ell\in[L]$ and $Z\sim\mathcal{N}(0,1)$. 
    Let $k$ be a positive integer and $g(x)=\varphi^2(x),\varphi'(x)$ or $x\varphi(x)$. Then 
    \begin{align}\label{eq:coefficients_convergence_rate}
        \EE\Big|\frac{1}{d_\ell}\sum_{i=1}^{d_\ell}g\Big(\frac{\bW^{*(\ell)}_i\cdot\bX^{(\ell-1)}_\mu}{{\sqrt{d_{\ell-1}}}}\Big) -\EE g(Z\sqrt{\sigma_{\ell-1}})\Big|^k\leq \frac{C_{k,\varphi}}{d_{[0:\ell]}^{k/2}}\,,
    \end{align}for a positive constant $C_{k,\varphi}>0$, where $d_{[0:\ell]}=\min\{(d_s)_{s=0}^\ell\}$.
\end{lemma}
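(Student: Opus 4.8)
The plan is to argue by induction on $\ell\in[L]$, exploiting that, conditionally on the post-activations $\bX^{(\ell-1)}_\mu$, the pre-activations $\alpha_i:=\bW^{*(\ell)}_i\cdot\bX^{(\ell-1)}_\mu/\sqrt{d_{\ell-1}}$, $i\in[d_\ell]$, are i.i.d.\ centred Gaussians with variance $q:=\|\bX^{(\ell-1)}_\mu\|^2/d_{\ell-1}$, since the rows of $\bW^{*(\ell)}$ are independent. (We may assume $\varphi\not\equiv0$, hence $\sigma_{\ell-1}>0$.) Introducing $h(s):=\EE_Z g(\sqrt s\,Z)$, so that the target mean is $h(\sigma_{\ell-1})$ and the conditional mean of each summand is $h(q)$, I would write
\begin{align}
\frac1{d_\ell}\sum_{i=1}^{d_\ell}g(\alpha_i)-h(\sigma_{\ell-1})
=\underbrace{\frac1{d_\ell}\sum_{i=1}^{d_\ell}\big(g(\alpha_i)-h(q)\big)}_{T_1}+\underbrace{\big(h(q)-h(\sigma_{\ell-1})\big)}_{T_2}
\end{align}
and bound $\EE|T_1|^k$ and $\EE|T_2|^k$ separately, using $|T_1+T_2|^k\le2^{k-1}(|T_1|^k+|T_2|^k)$.

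For $T_1$: conditionally on $\bX^{(\ell-1)}_\mu$ this is a normalized sum of $d_\ell$ i.i.d.\ centred variables. Since $\varphi$ is odd and Lipschitz, $|\varphi(x)|\le L_\varphi|x|$, so each of $g=\varphi^2,\varphi',x\varphi$ has at most quadratic growth and $g(\alpha_1)$ is sub-exponential with Orlicz norm $O(1+q)$. A Marcinkiewicz--Zygmund/Rosenthal moment inequality — or, equivalently, re-running the exponential-moment estimate of Lemma~\ref{lem:concentration_propagation} followed by the tail integration of Lemma~\ref{lem:moments_bounds} conditionally on $\bX^{(\ell-1)}_\mu$ — gives $\EE[|T_1|^k\mid\bX^{(\ell-1)}_\mu]\le Q_k(q)\,d_\ell^{-k/2}$ with $Q_k$ a polynomial depending only on $k,\varphi$. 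To integrate over $\bX^{(\ell-1)}_\mu$ I need that $q$ has bounded moments of all orders, uniformly in the widths: this is exactly the induction hypothesis at layer $\ell-1$ with $g=\varphi^2$ (for $\ell=1$ it is the standard $\chi^2$ concentration of $\|\bX^{(0)}_\mu\|^2/d_0$ around $\sigma_0=1$), which yields $\EE|q-\sigma_{\ell-1}|^m\le C_{m,\varphi}d_{[0:\ell-1]}^{-m/2}$ and hence $\EE\,Q_k(q)\le C_{k,\varphi}$. Therefore $\EE|T_1|^k\le C_{k,\varphi}d_\ell^{-k/2}$.

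The main obstacle is $T_2$, the fluctuation of the conditional mean, because $g$ is not globally Lipschitz, so one cannot naively bound $|h(q)-h(\sigma_{\ell-1})|$ by $|q-\sigma_{\ell-1}|$. The resolution is Gaussian smoothing: differentiating, $h'(s)=\tfrac1{2\sqrt s}\EE[Z\,g'(\sqrt s\,Z)]$, and since $g'\in\{2\varphi\varphi',\ \varphi'',\ \varphi+x\varphi'\}$ one gets $|Z\,g'(\sqrt s\,Z)|\le C_\varphi(\sqrt s\,Z^2+|Z|)$, so $|h'(s)|\le C_\varphi(1+s^{-1/2})$, which is bounded on $[\sigma_{\ell-1}/2,\infty)$. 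Hence $h$ is Lipschitz there, and on $\{q\ge\sigma_{\ell-1}/2\}$ we have $|T_2|\le C_\varphi|q-\sigma_{\ell-1}|$; on the complementary rare event I would use $|T_2|\le|h(q)|+|h(\sigma_{\ell-1})|\le C_\varphi(1+q)$, insert $\mathbbm{1}(q<\sigma_{\ell-1}/2)\le(2|q-\sigma_{\ell-1}|/\sigma_{\ell-1})^{k}$, and conclude by Cauchy--Schwarz with the bounded moments of $q$ and of $|q-\sigma_{\ell-1}|$ from the induction. Either way $\EE|T_2|^k\le C_{k,\varphi}\EE|q-\sigma_{\ell-1}|^k\le C_{k,\varphi}d_{[0:\ell-1]}^{-k/2}$.

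Combining the two bounds, $\EE|T_1+T_2|^k\le C_{k,\varphi}(d_\ell^{-k/2}+d_{[0:\ell-1]}^{-k/2})\le 2C_{k,\varphi}d_{[0:\ell]}^{-k/2}$ since $d_{[0:\ell]}=\min\{d_{[0:\ell-1]},d_\ell\}$, which closes the induction and gives the claim (with the constant possibly enlarged over the finitely many layers $\ell\le L$ and the three choices of $g$). The delicate points are thus: recovering Lipschitzness of $h$ near $\sigma_{\ell-1}$ despite the quadratic growth of $g$, handling the $s^{-1/2}$ singularity of $h'$ via separation of $q$ from $0$ on the typical event, and propagating the bounded-moments property of $q$ through the layers — all of which are powered by the layer-$(\ell-1)$ case of the lemma with $g=\varphi^2$.
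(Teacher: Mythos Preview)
Your proposal is correct and follows the same inductive scheme and two-term decomposition as the paper: split into the fluctuation $T_1$ around the conditional mean $h(q)$ (handled via Lemma~\ref{lem:moments_bounds}/a Rosenthal-type bound, conditionally on $\bX^{(\ell-1)}_\mu$) and the drift $T_2=h(q)-h(\sigma_{\ell-1})$ (controlled by the inductive hypothesis for $g=\varphi^2$). The one substantive difference is in the treatment of $T_2$. You differentiate $h(s)=\EE_Z g(\sqrt{s}Z)$ in $s$, meet the $s^{-1/2}$ singularity of $h'$, and dispose of it by splitting on $\{q\ge\sigma_{\ell-1}/2\}$. The paper instead interpolates linearly in the standard deviation, setting $Z(s)=Z\big(s\sqrt{q}+(1-s)\sqrt{\sigma_{\ell-1}}\big)$ and writing $R=\int_0^1\EE_Z g'(Z(s))Z'(s)\,ds$; using $|g'(z)|\le \bar K_\varphi|z|$ (valid for $g=\varphi^2$ and $g(x)=x\varphi(x)$, while for $g=\varphi'$ one simply uses boundedness of $\varphi''$), the factors $(\sqrt q-\sqrt{\sigma_{\ell-1}})$ and $\int_0^1(s\sqrt q+(1-s)\sqrt{\sigma_{\ell-1}})ds=\tfrac12(\sqrt q+\sqrt{\sigma_{\ell-1}})$ recombine directly into $\tfrac12|q-\sigma_{\ell-1}|$, with no singularity and no event splitting. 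Your route is slightly longer but equally rigorous; the paper's $\sqrt{s}$-interpolation is the cleaner shortcut.
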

\begin{proof}
The key proof is the one for $g=\varphi^2$. In that case the statement reads:
\begin{align}\label{eq:intermediate_momentcontrol}
    \EE\Big|\frac{\|\bX_\mu^{(\ell)}\|^2}{d_\ell}-\sigma_\ell\Big|^k\leq \frac{C_{k,\varphi}}{d_{[0:\ell]}^{k/2}}\,.
\end{align}
    We proceed by induction. For $\ell=0$ the statement is surely true. Now assume it holds for $\ell-1$. The l.h.s.\ of \eqref{eq:intermediate_momentcontrol} can be bounded via triangular and Jensen inequalities as
    \begin{align}
        ...\leq 2^{k-1}\EE\Big|\frac{\|\bX_\mu^{(\ell)}\|^2}{d_\ell}&-\EE_Z\varphi^2\Big(Z\sqrt{\frac{\|\bX^{(\ell-1)}_\mu\|^2}{d_{\ell-1}}}\Big)\Big|^k\nonumber\\
        &+2^{k-1}\EE\Big|
        \EE_Z\varphi^2\Big(Z\sqrt{\frac{\|\bX^{(\ell-1)}_\mu\|^2}{d_{\ell-1}}}\Big)-\EE_Z\varphi^2(Z\sqrt{\sigma_{\ell-1}})
        \Big|^k\,.
    \end{align}
    The first of the two terms can be bounded simply using \eqref{eq:moment_control_1} for $\bX=\bY$, $\phi=\psi=\varphi$. Let us focus on the second. Define
    \begin{align}
        R:=
        \EE_Z g\Big(Z\sqrt{\frac{\|\bX^{(\ell-1)}_\mu\|^2}{d_{\ell-1}}}\Big)-\EE_Z g(Z\sqrt{\sigma_{\ell-1}})
        =\EE_Z\int_0^1 ds g'(Z(s))Z'(s)
    \end{align}where $$Z(s):=Z\Big(s\sqrt{\frac{\|\bX^{(\ell-1)}\|^2}{d_{\ell-1}}}+(1-s)\sqrt{\sigma_{\ell-1}}\Big).$$ Since $\varphi,\varphi'$ are both Lipschitz %\commentoe{This is not mentioned in Thm 1: be more precise on hypotheses everywhere; maybe to state once and for all on the main} 
    there exists a constant $\bar{K}_\varphi$ such that $|g'(z)|\leq\bar K_\varphi|z|$. Hence
    \begin{align}
        |R|&\leq \bar K_\varphi\Big|\sqrt{\frac{\|\bX_\mu^{(\ell-1)}\|^2}{d_{\ell-1}}}-\sqrt{\sigma_{\ell-1}}\Big|\,{ \EE_Z|Z|^2}
        \int_0^1 ds\, \Big(s\sqrt{\frac{\|\bX^{(\ell-1)}\|^2}{d_{\ell-1}}}+(1-s)\sqrt{\sigma_{\ell-1}}\Big)\nonumber\\
        &=
        \frac12\bar K_\varphi\Big|{\frac{\|\bX_\mu^{(\ell-1)}\|^2}{d_{\ell-1}}}-{\sigma_{\ell-1}}\Big|\,,
    \end{align} 
    and therefore $\EE|R|^k\leq C'_{\varphi,k} d_{[0:\ell-1]}^{-k/2}$ for a positive constant $C'_{\varphi,k}$ by the inductive hypothesis.

    Given that the statement is true for $g=\varphi^2$, proving it for $g=\varphi'$ follows exactly the same steps, using \eqref{eq:moment_control_1} with $\phi=\varphi'$ and $\psi=1$. Analogously for $g(x)=x\varphi(x)$, \eqref{eq:moment_control_1} can be used with $\phi=\varphi$ and $\psi= Id$ {(recall that the first and second derivative of $\varphi$ are bounded).}
\end{proof}

Finally, the quasi orthogonality of the inputs, i.e.
\begin{align}
    \EE\Big|\frac{\bX_\mu^{(0)}\cdot \bX_\nu^{(0)}}{d_0}\Big|^k\leq\frac{C_k}{d_0^{k/2}}
\end{align}is shown to propagate to subsequent layers:
\begin{lemma}[Quasi-orthogonality propagation]\label{lem:orthogonality propagation}
    For all $\ell=0,\dots,L$, and a positive integer $k$ there exists positive a constant $C_k$ such that
    \begin{align}
        \EE\Big|\frac{\bX_\mu^{(\ell)}\cdot \bX_\nu^{(\ell)}}{d_\ell}\Big|^k\leq\frac{C_k}{ d_{[0:\ell]}^{k/2}}\,,
    \end{align}for all $\mu\neq \nu$, where $d_{[0:\ell]}=\min\{(d_s)_{s=0}^\ell\}$.
\end{lemma}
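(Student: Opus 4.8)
The plan is to argue by induction on $\ell$, exactly paralleling the structure of the proof of Lemma~\ref{lem:moments_post_activations}, but now tracking the \emph{off-diagonal} (quasi-orthogonal) scalar product rather than the diagonal norm. The base case $\ell=0$ is the assumed quasi-orthogonality of the Gaussian inputs. For the inductive step, I would condition on the post-activations $\bX_\mu^{(\ell-1)},\bX_\nu^{(\ell-1)}$ at the previous layer and write, for $\mu\neq\nu$,
\begin{align}
\frac{\bX_\mu^{(\ell)}\cdot\bX_\nu^{(\ell)}}{d_\ell}
=\frac{\varphi(\bW^{*(\ell)}\bX_\mu^{(\ell-1)}/\sqrt{d_{\ell-1}})^\intercal\varphi(\bW^{*(\ell)}\bX_\nu^{(\ell-1)}/\sqrt{d_{\ell-1}})}{d_\ell}\,.
\end{align}
Conditionally on the previous layer, $\bW^{*(\ell)}$ is a matrix with i.i.d.\ sub-Gaussian rows acting on the two fixed vectors $\bX_\mu^{(\ell-1)},\bX_\nu^{(\ell-1)}$, so Lemma~\ref{lem:moments_bounds}, equation \eqref{eq:moment_control_2}, applies with $\bX=\bX_\mu^{(\ell-1)}$, $\bY=\bX_\nu^{(\ell-1)}$, $\phi=\psi=\varphi$, $p=d_\ell$, $\sigma_X^2=\|\bX_\mu^{(\ell-1)}\|^2$, $\sigma_Y^2=\|\bX_\nu^{(\ell-1)}\|^2$.

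This yields, after taking $\EE_{\bW^{*(\ell)}}$ and then the outer expectation over the previous layers,
\begin{align}
\EE\Big|\frac{\bX_\mu^{(\ell)}\cdot\bX_\nu^{(\ell)}}{d_\ell}\Big|^k
\leq \frac{C'_k}{d_\ell^{k/2}}\,\EE\big(\|\bX_\mu^{(\ell-1)}\|\|\bX_\nu^{(\ell-1)}\|/d_{\ell-1}\big)^k
+L_k\,\EE\Big|\frac{\bX_\mu^{(\ell-1)}\cdot\bX_\nu^{(\ell-1)}}{d_{\ell-1}}\cdot\frac{\cdots}{\cdots}\Big|^k+\cdots,
\end{align}
where the terms hidden in the ellipsis are $L_k|\bX^\intercal\bY|^k$ and $L'_k(|\bX^\intercal\bY|/\|\bY\|^2)^{2k}$ from \eqref{eq:moment_control_2}, \emph{after} I rescale to match the normalizations: writing $\bX^\intercal\bY=d_{\ell-1}\cdot(\bX_\mu^{(\ell-1)}\cdot\bX_\nu^{(\ell-1)}/d_{\ell-1})$ and $\|\bY\|^2=d_{\ell-1}\cdot(\|\bX_\nu^{(\ell-1)}\|^2/d_{\ell-1})$, the first-type term contributes $L_k d_{\ell-1}^{-k}\EE|\bX_\mu^{(\ell-1)}\cdot\bX_\nu^{(\ell-1)}/d_{\ell-1}\cdot d_{\ell-1}/\cdots|^k$ — I need to be careful here that the $d_{\ell-1}$ factors combine to leave exactly $\EE|\bX_\mu^{(\ell-1)}\cdot\bX_\nu^{(\ell-1)}/d_{\ell-1}|^k$ times an $O(1)$ constant. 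For the leading concentration term, $\EE(\|\bX_\mu^{(\ell-1)}\|^2/d_{\ell-1})^k$ is $O(1)$ by Lemma~\ref{lem:moments_post_activations} (it concentrates on $\sigma_{\ell-1}$), and by Cauchy--Schwarz the same holds for the product of the two norms. For the last term one similarly uses that $\|\bX_\nu^{(\ell-1)}\|^2/d_{\ell-1}$ is bounded away from $0$ with overwhelming probability (again Lemma~\ref{lem:moments_post_activations}), so the denominator is harmless up to an $O(1)$ factor and one is left with $\EE|\bX_\mu^{(\ell-1)}\cdot\bX_\nu^{(\ell-1)}/d_{\ell-1}|^{2k}$, which by the inductive hypothesis is $O(d_{[0:\ell-1]}^{-k})$ — even better than needed. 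Collecting everything and using $d_\ell\geq d_{[0:\ell]}$ and $d_{[0:\ell-1]}\geq d_{[0:\ell]}$ gives $\EE|\bX_\mu^{(\ell)}\cdot\bX_\nu^{(\ell)}/d_\ell|^k\leq C_k d_{[0:\ell]}^{-k/2}$, closing the induction.

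The main obstacle I anticipate is purely bookkeeping: Lemma~\ref{lem:moments_bounds} is stated for \emph{deterministic} vectors, so the conditioning argument must be set up carefully, and the error terms in \eqref{eq:moment_control_2} involve the \emph{unnormalized} quantities $|\bX^\intercal\bY|$ and $\|\bY\|^2$, whereas the inductive hypothesis controls the \emph{normalized} scalar product $\bX_\mu^{(\ell-1)}\cdot\bX_\nu^{(\ell-1)}/d_{\ell-1}$. Reconciling the powers of $d_{\ell-1}$ — and in particular verifying that the last, naively-dangerous term $L'_k(|\bX^\intercal\bY|/\|\bY\|^2)^{2k}$ does not blow up because the division by $\|\bY\|^2\sim d_{\ell-1}\sigma_{\ell-1}$ exactly cancels the surplus dimension factor — is the step that requires the most attention. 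Once the normalizations are aligned, the only genuine probabilistic inputs are \eqref{eq:moment_control_1}--\eqref{eq:moment_control_2} and the norm concentration from Lemma~\ref{lem:moments_post_activations}, and the induction goes through as a routine application of the triangle and Cauchy--Schwarz inequalities.
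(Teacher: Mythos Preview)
Your proposal is essentially correct and follows the same route as the paper's proof: induction on $\ell$, apply \eqref{eq:moment_control_2} conditionally on the previous layer, control the norm factors via Lemma~\ref{lem:moments_post_activations}, and feed the remaining inner-product terms back into the inductive hypothesis.

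Two minor points. First, your normalization worry dissolves once you make the correct identification in Lemma~\ref{lem:moments_bounds}: take $\bX=\bX_\mu^{(\ell-1)}/\sqrt{d_{\ell-1}}$ and $\bY=\bX_\nu^{(\ell-1)}/\sqrt{d_{\ell-1}}$ (so that $\bW\bX$ matches $\bW^{*(\ell)}\bX_\mu^{(\ell-1)}/\sqrt{d_{\ell-1}}$). Then $\sigma_X=\|\bX_\mu^{(\ell-1)}\|/\sqrt{d_{\ell-1}}$, $\bX^\intercal\bY=\bX_\mu^{(\ell-1)}\cdot\bX_\nu^{(\ell-1)}/d_{\ell-1}$ and $\|\bY\|^2=\|\bX_\nu^{(\ell-1)}\|^2/d_{\ell-1}$, and the three terms of \eqref{eq:moment_control_2} come out directly in the normalized form you want, with no surplus powers of $d_{\ell-1}$ to reconcile. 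Second, for the denominator $\|\bY\|^{-2}$ in the last term, the paper makes your ``bounded away from $0$ with overwhelming probability'' precise by an explicit truncation: it introduces the event $B_{\nu,\delta}=\{|\|\bX_\nu^{(\ell-1)}\|^2/d_{\ell-1}-\sigma_{\ell-1}|\ge\delta\}$, bounds its probability by Markov from Lemma~\ref{lem:moments_post_activations}, controls the bad-event contribution by Cauchy--Schwarz against the (uniformly bounded) $2k$-th moment of the scalar product, and on the good event replaces $\|\bY\|^2$ by $\sigma_{\ell-1}-\delta$ with $\delta=\sigma_{\ell-1}/2$. This is exactly the mechanism you allude to.
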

\begin{proof}
    The proof proceeds by induction. The statement is certainly true for $\ell=0$. 
    
    Assume now it holds for $\ell-1$, and let us rewrite the l.h.s.\ of the inequality as
    \begin{align}\label{eq:intermediate_111}
        \EE\EE_{\bW^{*(\ell)}}\Big|\frac{1}{d_\ell} \varphi\Big(\frac{\bW^{*(\ell)}\bX_\mu^{(\ell-1)}}{\sqrt{d_{\ell-1}}}\Big)\cdot \varphi\Big(\frac{\bW^{*(\ell)}\bX_\nu^{(\ell-1)}}{\sqrt{d_{\ell-1}}}\Big)\Big|^k\,.
    \end{align}Using the fact that $\varphi$ is Lipschitz, and Cauchy-Schwartz inequality, it is clear that the above object is bounded by a constant. Furthermore, from Lemma~\ref{lem:moments_post_activations} and Markov's inequality we know that
    \begin{align}\label{eq:Markov_moments_post_activ}
        \mathbb{P}\Big(\Big|\frac
        {\|\bX_\mu^{(\ell-1)}\|^2}{d_{\ell-1}}-\sigma_{\ell-1}\Big|\geq\delta\Big)\leq \frac{1}{\delta^{2k}}\EE
        \Big|\frac
        {\|\bX_\mu^{(\ell-1)}\|^2}{d_{\ell-1}}-\sigma_{\ell-1}\Big|^{2k}\leq\frac{C_{2k,\varphi}}{d_{[0:\ell-1]}^{k}\delta^{2k}}
    \end{align}for any $\delta>0$. If we call the event measured above $B_{\mu,\delta}$, then there exist a positive constant $C_k'$ such that
    \begin{align}
        \EE\EE_{\bW^{*(\ell)}}\Big|\frac{1}{d_\ell} \varphi\Big(\frac{\bW^{*(\ell)}\bX_\mu^{(\ell-1)}}{\sqrt{d_{\ell-1}}}\Big)\cdot \varphi\Big(\frac{\bW^{*(\ell)}\bX_\nu^{(\ell-1)}}{\sqrt{d_{\ell-1}}}\Big)\Big|^k\mathbbm{1}(B_{\mu,\delta})\mathbbm{1}(B_{\nu,\delta})\leq\frac{C_{k}'}{d_{[0:\ell-1]}^{k/2}\delta^k}\,.
    \end{align}Hence, up to an error we can afford, we can always use the fact that $d_{\ell-1}(\sigma_{\ell-1}-\delta)\leq \|\bX_\mu^{(\ell-1)}\|^2\leq d_{\ell-1}(\sigma_{\ell-1}+\delta)$. With this observation, using \eqref{eq:moment_control_2} we can bound \eqref{eq:intermediate_111} with
    \begin{align}
        ...\leq \frac{C_k'}{d_\ell^{k/2}}(\sigma_{\ell-1}+\delta)^{2k}
        +L_k\EE\Big|\frac{\bX_\mu^{(\ell-1)}\cdot\bX_\nu^{(\ell-1)}}{d_{\ell-1}}\Big|^k+L_k'\EE\Big(\frac{\bX_\mu^{(\ell-1)}\cdot\bX_\nu^{(\ell-1)}}{d_{\ell-1}(\sigma_{\ell-1}-\delta)}\Big)^{2k}.
    \end{align}
    To conclude the proof it suffices to choose $\delta=\sigma_{\ell-1}/2$, and to use the inductive hypothesis.
\end{proof}

\section{Properties of the output kernel}
Recall the definition \eqref{eq:output_kernel_vs_Gauss_channel} of the output kernel $P_{  out}$. Based on this we are able to prove certain properties about $u_{Y_{t\mu}}(S_{t\mu})$ and its derivatives that are inherited from those of the readout function $f$.
For this section $P_{  out}^{x}(y\mid x)$ denotes the derivative of the kernel w.r.t.\ $x$. A similar definition holds for $y$, and second derivatives.

We start with a computation of the derivatives of $P_{  out}$:

\begin{lemma}\label{lem:bound_P_out_der}
Let $y=Y_{t\mu}=f(S_{t\mu};\bA_\mu)+\sqrt{\Delta}Z_\mu$. Let $f(\cdot;\bA)$ be a $C^2(\mathbb{R})$ with bounded first and second derivatives. Then there exists constant $C(f)$ such that
\begin{align}
    \max\Big\{\Big|\dfrac{P_{  out}^y(y|x)}{P_{  out}(y|x)}\Big|,
    \Big|\dfrac{P_{  out}^x(y|x)}{P_{  out}(y|x)}\Big|,
    \Big|\dfrac{P_{  out}^{yy}(y|x)}{P_{  out}(y|x)}\Big|,
    \Big|\dfrac{P_{  out}^{yx}(y|x)}{P_{  out}(y|x)}\Big|,
    \Big|\dfrac{P_{  out}^{xx}(y|x)}{P_{  out}(y|x)}\Big|\Big\}<C(f)(|Z_\mu|^2+1)\,.
\end{align}
\end{lemma}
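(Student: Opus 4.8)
The plan is to exploit the Gaussian-mixture representation $P_{  out}(y\mid x)=\int dP_A(\bA)\,g_\Delta\big(y-f(x;\bA)\big)$, where $g_\Delta$ denotes the centered Gaussian density of variance $\Delta$, and to rewrite each of the five ratios as an average against the normalized probability measure $d\mu_{y,x}(\bA):=dP_A(\bA)\,g_\Delta(y-f(x;\bA))/P_{  out}(y\mid x)$. Since $f(\cdot;\bA)\in C^2(\mathbb{R})$ with $f',f''$ bounded and $g_\Delta$ has Gaussian tails, differentiation under the integral sign is legitimate, and repeated use of $g_\Delta'(u)=-u\,g_\Delta(u)/\Delta$ gives, with the shorthand $r=r(\bA):=y-f(x;\bA)$,
\begin{align*}
&P^y_{  out}=-\!\int dP_A\,g_\Delta(r)\,\tfrac r\Delta,\quad P^x_{  out}=\int dP_A\,g_\Delta(r)\,\tfrac{rf'}{\Delta},\quad P^{yy}_{  out}=\int dP_A\,g_\Delta(r)\Big(\tfrac{r^2}{\Delta^2}-\tfrac1\Delta\Big),\\
&P^{yx}_{  out}=\int dP_A\,g_\Delta(r)\Big(\tfrac{f'}{\Delta}-\tfrac{r^2f'}{\Delta^2}\Big),\quad P^{xx}_{  out}=\int dP_A\,g_\Delta(r)\Big(\tfrac{r^2(f')^2}{\Delta^2}+\tfrac{rf''}{\Delta}-\tfrac{(f')^2}{\Delta}\Big).
\end{align*}
Dividing by $P_{  out}(y\mid x)$, each ratio becomes $\langle Q_\bullet(\bA)\rangle_{\mu_{y,x}}$, where $Q_\bullet$ is a polynomial of degree at most two in $r$ whose coefficients are products of $f'(x;\bA)$, $f''(x;\bA)$ and constants depending on $\Delta$.

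Next I would bound $Q_\bullet$ pointwise in $\bA$. Using $|f'|,|f''|\le K$ and $|r|\le\tfrac12(1+r^2)$, all five polynomials satisfy $|Q_\bullet(\bA)|\le C_1(f,\Delta)\,(1+r(\bA)^2)$, so that $\max_\bullet\big|P^\bullet_{  out}(y\mid x)/P_{  out}(y\mid x)\big|\le C_1(f,\Delta)\big(1+\langle r^2\rangle_{\mu_{y,x}}\big)$. It then remains to control $\langle r^2\rangle_{\mu_{y,x}}=\langle(y-f(x;\bA))^2\rangle_{\mu_{y,x}}$. At this step one inserts the explicit form $y=Y_{t\mu}=f(S_{t\mu};\bA_\mu)+\sqrt\Delta\,Z_\mu$, so that for every $\bA$
\[
r(\bA)^2=\big(f(S_{t\mu};\bA_\mu)-f(x;\bA)+\sqrt\Delta\,Z_\mu\big)^2\le 2\big(f(S_{t\mu};\bA_\mu)-f(x;\bA)\big)^2+2\Delta Z_\mu^2,
\]
and, using the boundedness of $f$, the first term is at most $C_2(f)$ uniformly in $x,\bA,\bA_\mu,S_{t\mu}$. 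Hence $\langle r^2\rangle_{\mu_{y,x}}\le C_2(f)+2\Delta Z_\mu^2$, which, after absorbing the fixed constant $\Delta$, yields $\max_\bullet|P^\bullet_{  out}/P_{  out}|\le C(f)(|Z_\mu|^2+1)$.

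The one genuinely delicate point is this last bound on $\langle r^2\rangle_{\mu_{y,x}}$: one cannot estimate the numerator $\int dP_A\,r^2g_\Delta(r)$ directly, since that would leave the (possibly small) normalization $P_{  out}(y\mid x)$ in the denominator; instead one must bound $r^2$ \emph{uniformly over the support of $\mu_{y,x}$}, which is exactly where the boundedness of $f$ together with the Gaussian observation noise is used. Everything else — justifying differentiation under the integral, the algebraic identities for the $P^\bullet_{  out}$, and the polynomial estimate on $Q_\bullet$ via $\|f'\|_\infty,\|f''\|_\infty<\infty$ — is routine, and the final statement follows by taking the maximum over the five ratios.
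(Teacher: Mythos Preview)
The paper does not actually prove this lemma; it simply refers to Lemma~17 of \cite{camilli-tiepova-barbier2023fundamental}. Your argument---rewriting each ratio as a $\mu_{y,x}$-average of a polynomial of degree at most two in $r=y-f(x;\bA)$ and then bounding $r^2$ pointwise in $\bA$---is the standard route and is almost certainly what the cited proof does.

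There is, however, one genuine issue. In the last step you write ``using the boundedness of $f$, the first term is at most $C_2(f)$ uniformly in $x,\bA,\bA_\mu,S_{t\mu}$.'' But the stated hypothesis of the lemma is only that $f'$ and $f''$ are bounded, not $f$ itself. With merely $f'$ bounded you cannot control $(f(S_{t\mu};\bA_\mu)-f(x;\bA))^2$ uniformly in $x$: already for $f(u;\bA)=u$ (which is odd, $C^2$, with $f'=1$ and $f''=0$ bounded) one has $P^x_{out}/P_{out}=(y-x)/\Delta=(S_{t\mu}+\sqrt{\Delta}Z_\mu-x)/\Delta$, which is not dominated by $C(|Z_\mu|^2+1)$ as $|x|\to\infty$. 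So either the lemma implicitly also assumes $f$ bounded---which is consistent with the paper's own use of ``since $f$ is bounded'' in the Efron--Stein step of Appendix~\ref{app:concentr}---or the bound is only intended at the specific arguments $x=S_{t\mu}$ or $x=s_{t\mu}$ that actually occur in the applications. Your proof is correct once boundedness of $f$ is added to the hypotheses; as literally stated, the assumption you invoke is stronger than what the lemma provides, and without it the claim fails.
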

\begin{proof}
    For the proof we refer to \cite[Lemma 17]{camilli-tiepova-barbier2023fundamental}.
\end{proof}

\begin{lemma}[Properties of $P_{  out}$]\label{lem:propertiesPout}
Recall the definition $u_y(x):=\log P_{  out}(y\mid x)$. We denote $u'_y\equiv u'_y(x):=\partial_x u_y(x)$. Furthermore, let 
\begin{align}
    \label{eq:U_munu}
    U_{\mu\nu}&:=\delta_{\mu\nu} u''_{Y_{t\mu}}(S_{t\mu})+u'_{Y_{t\mu}}(S_{t\mu}) u'_{Y_{t\nu}}(S_{t\nu})\,,\\
    \label{eq:U_munueta}
    U_{\mu\nu\eta}&:=u'''_{Y_{t\mu}}(S_{t\mu})\delta_{\mu\nu} \delta_{\nu\eta}+u''_{Y_{t\mu}}(S_{t\mu}) u'_{Y_{t\nu}}(S_{t\nu})\delta_{\mu\eta}\\
    &+u'_{Y_{t\mu}}(S_{t\mu}) u''_{Y_{t\nu}}(S_{t\nu}) \delta_{\nu\eta} +u'_{Y_{t\mu}}(S_{t\mu}) u'_{Y_{t\nu}}(S_{t\nu}) u'_{Y_{t\eta}}(S_{t\eta})\,.\nonumber
\end{align}
If $f(\cdot;\bA)$ is a $C^2(\mathbb{R})$ with $P_A$-almost surely bounded first and second derivatives then
\begin{align*}
    &\E[u'_{Y_{t\mu}}(S_{t\mu})\mid S_{t\mu}]=\E[U_{\mu\nu}\mid S_{t\mu},S_{t\nu}]=0\,, \; \E[U_{\mu\nu\eta}\mid S_{t\mu},S_{t\nu},S_{t\eta}]=0\text{ for }\mu\neq \nu\,,\\
    &\E[(u'_{Y_{t\mu}}(S_{t\mu}))^2\mid S_{t\mu}]\,,\,\E[U_{\mu\nu}^2\mid S_{t\mu},S_{t\nu}]\leq C(f)\,,\; \E[U_{\mu\nu\eta}^2\mid S_{t\mu},S_{t\nu},S_{t\eta}]\leq C'(f)\text{ for }\mu\neq\nu
\end{align*}for some positive constants $C(f), C'(f)$. 
\end{lemma}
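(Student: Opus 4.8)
The plan rests on one structural observation: conditionally on the preactivation $S_{t\mu}$, the response $Y_{t\mu}$ has law $P_{out}(\,\cdot\mid S_{t\mu})$, so that for any integrable $g$ one has $\E[g(Y_{t\mu},S_{t\mu})\mid S_{t\mu}]=\int g(y,S_{t\mu})P_{out}(y\mid S_{t\mu})\,dy$; moreover, since the noise pairs $(\bA_\mu,Z_\mu)$ are i.i.d.\ across $\mu$ and independent of every $S_{t\nu}$, the responses $(Y_{t\nu})_\nu$ are conditionally independent given the whole family $(S_{t\nu})_\nu$. I will also use the elementary identities $u'_y=P_{out}^x/P_{out}$ and $u''_y+(u'_y)^2=P_{out}^{xx}/P_{out}$, evaluated at the relevant arguments; in particular $U_{\mu\mu}=(P_{out}^{xx}/P_{out})(Y_{t\mu}\mid S_{t\mu})$.

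For the first-moment identities I would write $\E[u'_{Y_{t\mu}}(S_{t\mu})\mid S_{t\mu}]=\int P_{out}^x(y\mid x)\big|_{x=S_{t\mu}}\,dy=\partial_x\big|_{x=S_{t\mu}}\!\int P_{out}(y\mid x)\,dy=\partial_x 1=0$, and analogously $\E[U_{\mu\mu}\mid S_{t\mu}]=\partial_x^2\big|_{x=S_{t\mu}}\!\int P_{out}(y\mid x)\,dy=0$; for $\mu\neq\nu$, conditional independence factorizes $\E[U_{\mu\nu}\mid S_{t\mu},S_{t\nu}]=\E[u'_{Y_{t\mu}}(S_{t\mu})\mid S_{t\mu}]\,\E[u'_{Y_{t\nu}}(S_{t\nu})\mid S_{t\nu}]=0$.

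For the cubic object $U_{\mu\nu\eta}$ with $\mu\neq\nu$, the $u'''$ term drops since $\delta_{\mu\nu}=0$, and I would split on the value of $\eta$: if $\eta=\mu$ the surviving terms combine into $(P_{out}^{xx}/P_{out})(Y_{t\mu}\mid S_{t\mu})\,u'_{Y_{t\nu}}(S_{t\nu})$, whose conditional expectation vanishes because the $u'_{Y_{t\nu}}$ factor does and the two factors are conditionally independent; the case $\eta=\nu$ is symmetric; and if $\eta\notin\{\mu,\nu\}$ only the triple product $u'_{Y_{t\mu}}(S_{t\mu})u'_{Y_{t\nu}}(S_{t\nu})u'_{Y_{t\eta}}(S_{t\eta})$ remains, whose conditional expectation factorizes into three vanishing terms. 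For the variance bounds I would invoke Lemma~\ref{lem:bound_P_out_der}, which gives $|u'_{Y_{t\mu}}(S_{t\mu})|\leq C(f)(|Z_\mu|^2+1)$ and $|U_{\mu\mu}|=|P_{out}^{xx}/P_{out}|\leq C(f)(|Z_\mu|^2+1)$, hence for $\mu\neq\nu$ each of $U_{\mu\nu}$ and each term composing $U_{\mu\nu\eta}$ is bounded by a product of at most three factors of the form $C(f)(|Z_\cdot|^2+1)$; since every $Z_\mu$ is standard Gaussian independent of $S_{t\mu}$ and distinct $Z$'s are independent, the conditional moments $\E[(|Z_\mu|^2+1)^k\mid S_{t\mu}]$ are finite absolute constants, and squaring these bounds and taking conditional expectations yields the stated $O(1)$ bounds.

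The step I expect to be the main obstacle — though it is more a matter of care than of depth — is justifying the interchange of $\partial_x^k$ ($k=1,2,3$) with $\int dy$ applied to the normalization $\int P_{out}(y\mid x)\,dy\equiv1$. This is precisely where the $C^2$ regularity and boundedness of $f$ and $f'$, together with the Gaussian tails in the representation \eqref{eq:output_kernel_vs_Gauss_channel} of $P_{out}$, come in, producing the integrable dominating functions needed for differentiation under the integral sign; the rest is the algebraic bookkeeping above together with the conditional-independence structure.
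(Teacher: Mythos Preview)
Your proposal is correct and follows essentially the same approach as the paper: both use the score identity $\E[u'_{Y_{t\mu}}(S_{t\mu})\mid S_{t\mu}]=\int P_{out}^x(y\mid S_{t\mu})\,dy=0$, recognise $U_{\mu\mu}=P_{out}^{xx}/P_{out}$ to get the analogous vanishing, exploit conditional independence of the $Y_{t\mu}$'s given the $S_{t\mu}$'s for the off-diagonal cases, and then invoke Lemma~\ref{lem:bound_P_out_der} to bound everything by polynomials in $|Z_\mu|$ with finite Gaussian moments. Your treatment of $U_{\mu\nu\eta}$ (case-splitting on $\eta$) is simply a more explicit version of the paper's ``similar arguments hold''; the only extra care you flag---differentiation under the integral sign---is glossed over in the paper but is indeed routine given the Gaussian-kernel representation~\eqref{eq:output_kernel_vs_Gauss_channel}.
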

\begin{proof}
    By definition one has $\E[u'_{Y_{t\mu}}(S_{t\mu})\mid S_{t\mu}]=\int dy  P^x_{  out}(y\mid  S_{t\mu})=0$. For $U_{\mu\nu}$ instead, we first need to realize that $U_{\mu\mu}= P^{xx}_{  out}(Y_{t\mu}\mid S_{t\mu})/ P_{  out}(Y_{t\mu}\mid S_{t\mu})$ which implies $\E[U_{\mu\mu}\mid S_{t\mu}]=\int dy P^{xx}_{  out}(y\mid  S_{t\mu})=0$. Concerning the off-diagonal instead, conditionally on $S_{t\mu},\,S_{t\nu}$ the remaining disorder in the $Y$'s is independent, so for $\mu\neq\nu$ we have $\E[U_{\mu\nu}\mid S_{t\mu},S_{t\nu}]=\int dy P^x_{  out}(y\mid  S_{t\mu})\int dy' P^x_{  out}(y'\mid  S_{t\nu})=0$. Similar arguments hold for $U_{\mu\nu\eta}$ when $\mu\neq \nu$.

    For the boundedness of the derivatives it is sufficient to notice that 
    \begin{align}
        (u'_{Y_{t\mu}}(S_{t\mu}))^2=\Big(\frac{P^{x}_{  out}(Y_{t\mu}|S_{t\mu})}{P_{  out}(Y_{t\mu}|S_{t\mu})}\Big)^2\le C(f)(|Z_\mu|^4+1)\,,
    \end{align}
    the last inequality being true due to  Lemma~\ref{lem:bound_P_out_der} and the fact that $Y_{t\mu}=f(S_{t\mu};\bA_\mu)+\sqrt{\Delta}Z_\mu$. Now it is immediate that after taking the expectation conditioned on $S_{t\mu}$ we obtain a bound $C(f)$. 
    
    In order to deal with $U^2_{\mu\nu}$ we rewrite
    \begin{align*}
    U^2_{\mu\nu}=\Big(\delta_{\mu\nu} \Big(\frac{P^{xx}_{  out}(Y_{t\mu}|S_{t\mu})}{P_{  out}(Y_{t\mu}|S_{t\mu})}-\Big(\frac{P^x_{  out}(Y_{t\mu}|S_{t\mu})}{P_{  out}(Y_{t\mu}|S_{t\mu})}\Big)^2\Big)+\frac{P^x_{  out}(Y_{t\mu}|S_{t\mu})}{P_{  out}(Y_{t\mu}|S_{t\mu})}\frac{P^x_{  out}(Y_{t\nu}|S_{t\nu})}{P_{  out}(Y_{t\nu}|S_{t\nu})}\Big)^2\,.
    \end{align*}
    With the help of Lemma~\ref{lem:bound_P_out_der} one can see immediately that $U^2_{\mu\nu}\le C(f)P(Z_{t\mu},Z_{t\nu})$, where $P$ is some polynomial with even degrees. Once again, after taking expectation we get a bound by a positive constant $C(f)$.
    Similar arguments hold for $\E[U_{\mu\nu\eta}^2\mid S_{t\mu},S_{t\nu},S_{t\eta}]$.
\end{proof}

\section{Details for the proof of Theorem \ref{thm:reduction}}\label{appx:details_th1}
For the sake of clarity the proof has been divided in lemmas. 

\begin{lemma}\label{lem:B_0}
   $B$ in \eqref{eq:B-def} is identically $0$.
\end{lemma}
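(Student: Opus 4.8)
The plan is to derive $B=0$ directly from the Nishimori identity of Appendix~\ref{app:nishiID}. The structural observation that makes everything work is that in
\[
B=\frac{1}{n}\E_{(t)}\Big\langle \sum_{\mu=1}^{n}u'_{Y_{t\mu}}(s_{t\mu})\frac{ds_{t\mu}}{dt}\Big\rangle_t,
\]
the derivative $\frac{ds_{t\mu}}{dt}=-\frac{1}{2\sqrt{1-t}}\frac{\ba^\intercal}{\sqrt{d_L}}\varphi\big(\frac{\bW^{(L)}\bx^{(L-1)}_\mu}{\sqrt{d_{L-1}}}\big)+\frac{\rho_L}{2\sqrt{t}}\frac{\bv^\intercal\bx^{(L-1)}_\mu}{\sqrt{d_{L-1}}}+\frac12\sqrt{\tfrac{\epsilon_L}{t}}\,\zeta^{(L)}_\mu$, exactly like $s_{t\mu}$ itself, is a function of the student parameters $\btheta^{(L)},\bv,(\zeta^{(L)}_\nu)$ and of the input $\bX^{(0)}_\mu$ only; crucially it does not involve the response $Y_{t\mu}$.

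First I would apply the Nishimori identity to the interpolating posterior $\langle\,\cdot\,\rangle_t$: since the interpolated model is Bayes-optimal with matched teacher and student, a sample $(\btheta^{(L)},\bv,(\zeta^{(L)}_\nu))\sim\langle\,\cdot\,\rangle_t$ has the same joint law, together with $\mathcal{D}_t$, as the planted variables $(\btheta^{*(L)},\bv^*,(\zeta^{*(L)}_\nu))$. Hence $\E_{(t)}\langle G\rangle_t=\E_{(t)}[G^*]$ for any integrable $G$, where $G^*$ replaces every student variable by its starred counterpart; this sends $s_{t\mu}\mapsto S_{t\mu}$ and $\frac{ds_{t\mu}}{dt}\mapsto\frac{dS_{t\mu}}{dt}$, so $B=\frac{1}{n}\sum_{\mu}\E_{(t)}\big[u'_{Y_{t\mu}}(S_{t\mu})\frac{dS_{t\mu}}{dt}\big]$. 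Then, for each fixed $\mu$, I would condition on all the teacher variables and on $\bX^{(0)}_\mu$ (which determine both $S_{t\mu}$ and $\frac{dS_{t\mu}}{dt}$): under the law $P_{  out}(\,\cdot\mid S_{t\mu})$ of $Y_{t\mu}$ recorded in $\E_{(t)}$, and because $\frac{dS_{t\mu}}{dt}$ does not depend on $Y_{t\mu}$, Lemma~\ref{lem:propertiesPout} gives $\E[u'_{Y_{t\mu}}(S_{t\mu})\mid S_{t\mu}]=\int dy\,P^x_{  out}(y\mid S_{t\mu})=0$, whence the conditional expectation of $u'_{Y_{t\mu}}(S_{t\mu})\frac{dS_{t\mu}}{dt}$ vanishes. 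Summing over $\mu$ yields $B=0$.

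The one genuinely delicate point is licensing the Nishimori step for a non-bounded $G$: for $t\in(0,1)$ the prefactors $1/\sqrt{t}$ and $1/\sqrt{1-t}$ are harmless constants, but $u'_{Y_{t\mu}}(s_{t\mu})$, the Gaussian weights $\ba,\bW^{(L)}$ and $\zeta^{(L)}_\mu$ are unbounded. I would handle this by a standard truncation/dominated-convergence argument, using the moment bounds of Lemma~\ref{lem:bound_P_out_der} and Lemma~\ref{lem:propertiesPout} (to control $u'$ through $P_{  out}$ and the subexponential tails of $Z_\mu$) together with the Gaussianity of the weights. Alternatively, one can bypass the Nishimori identity altogether and establish $B=0$ by Gaussian integration by parts on $\ba$, $\bv$ and each $\zeta^{(L)}_\mu$ inside $\langle\,\cdot\,\rangle_t$: each such integration produces two contributions — one from the exponent $u_{Y_{t\nu}}(s_{t\nu})$ in the posterior weight and one from the normalisation $\cZ_t$ — that cancel, again leaving $B=0$; this route is computationally heavier but relies on the same ingredients.
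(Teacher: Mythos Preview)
Your proof is correct and follows exactly the paper's route: apply the Nishimori identity to replace $(s_{t\mu},\tfrac{ds_{t\mu}}{dt})$ by $(S_{t\mu},\tfrac{dS_{t\mu}}{dt})$, then use $\E[u'_{Y_{t\mu}}(S_{t\mu})\mid S_{t\mu}]=0$ from Lemma~\ref{lem:propertiesPout}. Your additional discussion of integrability and the alternative integration-by-parts route are extras the paper omits, but the core argument is identical.
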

\begin{proof}
    This is just a consequence of the Nishimori identities in Appendix \ref{app:nishiID}. In fact,
    \begin{align}
        B:=\frac{1}{n}\E_{(t)}\Big\langle \sum_{\mu=1}^{n}u^{\prime}_{Y_{t\mu}}(s_{t\mu})\frac {ds_{t\mu}}{dt} \Big\rangle_t&=\frac{1}{n}\E_{(t)}\sum_{\mu=1}^{n}u^{\prime}_{Y_{t\mu}}(S_{t\mu})\frac {dS_{t\mu}}{dt}\nonumber\\
        &=\frac{1}{n}\sum_{\mu=1}^{n}\E \Big[\EE_{(t)}[u^{\prime}_{Y_{t\mu}}(S_{t\mu})\mid S_{t\mu}]\frac {dS_{t\mu}}{dt}\Big]=0\,,
    \end{align}where we have used Lemma \ref{lem:propertiesPout}.
\end{proof}

Then we turn to the term $A_2$ in \eqref{eq:A_2_def}.

\begin{lemma}
    $A_2=O\Big(\Big(1+\frac{n}{d_m}\Big)\frac{1}{\sqrt{d_m}}\Big)$.
\end{lemma}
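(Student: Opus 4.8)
Recall from \eqref{eq:A_1_def}--\eqref{eq:A_2_def} that $A_2=\widehat A_2-\widetilde A$, where $\widehat A_2=\frac{\rho_L}{2n\sqrt t}\,\E_{(t)}\log\cZ_t\sum_\mu u'_\mu\frac{\bv^{*\intercal}\bX^{(L-1)}_\mu}{\sqrt{d_{L-1}}}$ is the genuine contribution of $\tfrac{d}{dt}\bar f_{n,t}$ and $\widetilde A=\frac{\rho_L}{2n\sqrt{1-t}}\,\E_{(t)}\log\cZ_t\sum_\mu u'_\mu\frac{\ba^{*\intercal}\bW^{*(L)}\bX^{(L-1)}_\mu}{\sqrt{d_Ld_{L-1}}}$ is exactly the term that was subtracted from $A_1$ in \eqref{eq:A_1_def}. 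I work in the representation of $\E_{(t)}$ given above, in which the labels $Y_{t\mu}$ are dummy variables weighted by $\prod_\mu P_{out}(Y_{t\mu}\mid S_{t\mu})$: then $\log\cZ_t$ does not depend on any teacher weight, so Gaussian integration by parts in a teacher weight acts only on this density and on $u'_\mu=u'_{Y_{t\mu}}(S_{t\mu})$.

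First I integrate $\widehat A_2$ by parts in $\bv^*$ and $\widetilde A$ by parts in $\bW^{*(L)}$. In each case the prefactor $1/\sqrt t$ (resp.\ $1/\sqrt{1-t}$) cancels the $\sqrt t$ (resp.\ $\sqrt{1-t}$) produced when the derivative hits $S_{t\nu}$, so the bound will be uniform in $t$, and summing over the width index produces the post-activation scalar products $\bX^{(L-1)}_\mu\!\cdot\bX^{(L-1)}_\nu/d_{L-1}$. Using $\partial_x\log P_{out}$ and recognising $u''_\mu+(u'_\mu)^2=U_{\mu\mu}$, $u'_\mu u'_\nu=U_{\mu\nu}$ for $\mu\neq\nu$ (cf.\ \eqref{eq:U_munu}), one finds $\widehat A_2=\frac{\rho_L^2}{2n}\sum_{\mu,\nu}\E_{(t)}\big[\log\cZ_t\,U_{\mu\nu}\,\bX^{(L-1)}_\mu\!\cdot\bX^{(L-1)}_\nu/d_{L-1}\big]$, while $\widetilde A$ gives the same expression with the scalar $\rho_L^2$ replaced by $\rho_L\kappa_\nu$, where $\kappa_\nu:=\frac1{d_L}\sum_{i=1}^{d_L}(a^*_i)^2\varphi'\big((\bW^{*(L)}\bX^{(L-1)}_\nu)_i/\sqrt{d_{L-1}}\big)$. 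Hence the $O(1)$ diagonal parts cancel and
\begin{align*}
A_2=\frac{\rho_L}{2n}\sum_{\mu,\nu=1}^n\E_{(t)}\Big[\log\cZ_t\;U_{\mu\nu}\,(\rho_L-\kappa_\nu)\,\frac{\bX^{(L-1)}_\mu\!\cdot\bX^{(L-1)}_\nu}{d_{L-1}}\Big].
\end{align*}
Writing $\kappa_\nu=\frac1{d_L}\sum_i\varphi'(\alpha_{i\nu})+\frac1{d_L}\sum_i((a^*_i)^2-1)\varphi'(\alpha_{i\nu})$, Lemma~\ref{lem:moments_post_activations} handles the first sum and a standard sub-exponential concentration the second, giving $\E|\rho_L-\kappa_\nu|^k=O(d_m^{-k/2})$ for every $k$: this is the quantitative ``matching of second moments''.

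Since $\E[U_{\mu\nu}\mid(S_{t\eta})_\eta,\ \text{weights}]=0$ for all $\mu,\nu$ (Lemma~\ref{lem:propertiesPout}), while $\rho_L-\kappa_\nu$ and $\bX^{(L-1)}_\mu\!\cdot\bX^{(L-1)}_\nu/d_{L-1}$ depend only on the weights, I may replace $\log\cZ_t$ by $\log\cZ_t-\E\log\cZ_t$. Splitting the sum into $D:=\sum_\mu U_{\mu\mu}(\rho_L-\kappa_\mu)\|\bX^{(L-1)}_\mu\|^2/d_{L-1}$ and $O:=\sum_{\mu\neq\nu}U_{\mu\nu}(\rho_L-\kappa_\nu)\bX^{(L-1)}_\mu\!\cdot\bX^{(L-1)}_\nu/d_{L-1}$ and applying Cauchy--Schwarz with the centred free entropy pulled out of the index sums,
\begin{align*}
|A_2|\le\frac{\rho_L}{2n}\sqrt{\mathbb{V}(\log\cZ_t)}\Big(\sqrt{\E D^2}+\sqrt{\E O^2}\Big),\qquad \mathbb{V}(\log\cZ_t)=n^2\,\mathbb{V}\big(\tfrac1n\log\cZ_t\big)\le C(f,\varphi)\big(n+n^2/d_m\big),
\end{align*}
the last step being Theorem~\ref{th:Z_k_concentr}. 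In $\E D^2$ the summands are pairwise uncorrelated because $\E[U_{\mu\mu}U_{\mu'\mu'}\mid\text{weights}]=0$ for $\mu\neq\mu'$, so $\E D^2=\sum_\mu\E[U_{\mu\mu}^2(\rho_L-\kappa_\mu)^2(\|\bX^{(L-1)}_\mu\|^2/d_{L-1})^2]=O(n/d_m)$ by H\"older, using $\E[|U_{\mu\mu}|^k\mid S_{t\mu}]=O(1)$ (Lemmas~\ref{lem:bound_P_out_der},~\ref{lem:propertiesPout}), $\E|\rho_L-\kappa_\mu|^k=O(d_m^{-k/2})$, and $\E(\|\bX^{(L-1)}_\mu\|^2/d_{L-1})^k=O(1)$. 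In $\E O^2=\sum_{\mu\neq\nu,\,\mu'\neq\nu'}\E[U_{\mu\nu}U_{\mu'\nu'}(\cdots)]$, conditioning on the weights makes the labels independent and each $U$ conditionally centred, so every term in which some element of $\{\mu,\nu,\mu',\nu'\}$ occurs exactly once vanishes; the only surviving configuration is $\{\mu',\nu'\}=\{\mu,\nu\}$, i.e.\ $O(n^2)$ terms, each $O(d_m^{-2})$ by H\"older (additionally using $\E|\bX^{(L-1)}_\mu\!\cdot\bX^{(L-1)}_\nu/d_{L-1}|^k=O(d_m^{-k/2})$ for $\mu\neq\nu$, Lemma~\ref{lem:orthogonality propagation}), so $\E O^2=O(n^2/d_m^2)$. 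Substituting and using $\sqrt{a+b}\le\sqrt a+\sqrt b$ together with $\sqrt{n/d_m}\le 1+n/d_m$, all resulting terms are $O\big((1+n/d_m)/\sqrt{d_m}\big)$, which is the claim.

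The main obstacle is the bookkeeping of the two integrations by parts: one must check that, after summing over the width index, the diagonal ($\nu=\mu$) contributions of the $\bv^*$- and $\bW^{*(L)}$-IBPs have exactly the same functional form up to the scalar coefficient ($\rho_L^2$ vs.\ $\rho_L\kappa_\nu$), so that the leading $O(1)$ parts cancel and only the small factor $\rho_L-\kappa_\nu$ survives. A secondary subtlety is the estimate of $\E O^2$: one must correctly use the conditional independence of the labels together with the conditional mean-zero of $U$ to see that only the fully paired index configuration contributes, which keeps the count at $O(n^2)$ terms and thus yields the right $n$-dependence.
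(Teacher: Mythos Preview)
Your proof is correct and follows essentially the same approach as the paper: Gaussian integration by parts in $\bv^*$ and $\bW^{*(L)}$ to produce the factor $\rho_L-\kappa_\nu$, centring of $\log\cZ_t$ via $\E[U_{\mu\nu}\mid S]=0$, Cauchy--Schwarz against the free-entropy fluctuation (Theorem~\ref{th:Z_k_concentr}), and then moment estimates from Lemmas~\ref{lem:moments_post_activations} and~\ref{lem:orthogonality propagation}. The only cosmetic difference is that you split the sum into diagonal and off-diagonal parts and bound $\E D^2$, $\E O^2$ separately, whereas the paper keeps the full double sum and reduces the resulting quadruple sum in one step; both routes give the same final rate.
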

\begin{proof}
    In the square parenthesis in \eqref{eq:A_2_def} we integrate by parts $\bv^*$ and $\bW^{*(L)}$, obtaining
    \begin{align}
        A_2=\frac{1}{2}\EE_{(t)}\big(\frac{1}{n}\log\cZ_t-\bar f_{n,t}\big)\sum_{\mu,\nu=1}^n U_{\mu\nu}\rho_L\Big[\rho_L
        -\ba^{*\intercal}
        \frac{\ba^*\circ\varphi'(\balpha_\mu)}{d_L}
        \Big]\frac{\bX_\mu^{(L-1)}\cdot\bX_\nu^{(L-1)}}{d_{L-1}}
    \end{align}where we used the notation $\balpha_\mu=\bW^{*(L)}\bX_\mu^{(L-1)}/\sqrt{d_{L-1}}$. 

    Using Cauchy-Schwartz inequality we can bound $|A_2|$ as in
    \begin{align}
        A_2^2\leq &\frac{\rho_L^2}{4}\mathbb{V}_{(t)}\big[\frac{1}{n}\log\cZ_t\big]\,\EE_{(t)}\sum_{\mu,\nu=1}^n\sum_{\eta,\lambda=1}^n
        U_{\mu\nu}U_{\eta\lambda}
        \frac{\bX_\mu^{(L-1)}\cdot\bX_\nu^{(L-1)}}{d_{L-1}}
        \frac{\bX_\eta^{(L-1)}\cdot\bX_\lambda^{(L-1)}}{d_{L-1}}\nonumber\\
        &\qquad\qquad\qquad \times\Big(\rho_L
        -\ba^{*\intercal}
        \frac{\ba^*\circ\varphi'(\balpha_\mu)}{d_L}
        \Big)\Big(\rho_L
        -\ba^{*\intercal}
        \frac{\ba^*\circ\varphi'(\balpha_\eta)}{d_L}
        \Big)\,.
    \end{align}
    Recall that $\varphi$ is Lipschitz, so $\rho_L$ can be bounded by a constant. In addition, by Theorem~\ref{th:Z_k_concentr}, the variance of the random free entropy contributes with $O(1/n+1/d_m)$ where $d_m=\min\{(d_k)_{k=0}^L\}$. In the summation over the indices $\mu,\nu,\eta,\lambda$ not all terms survive thanks to Lemma \ref{lem:propertiesPout}. The possibilities are $\mu=\nu$ and $\eta=\lambda$, or $\mu<\nu$ and $\eta<\lambda$ with $\mu=\eta$ and $\nu=\lambda$. Another possibility is $\mu=\nu=\eta=\lambda$, which is easily seen to be suppressed for combinatorial reasons. Also, recall that $\EE_{(t)}[U_{\mu\nu}^2\mid S_{t\mu},S_{t\nu}]\leq C(f)$ as proved in Lemma \ref{lem:propertiesPout}.

    Therefore, there exists a constant $C$ such that
    \begin{align}
         A_2^2\leq C\Big(\frac{1}{n}+\frac{1}{d_m}\Big)\EE_{(t)}\sum_{\mu,\nu=1}^n\Big(
         \frac{\bX_\mu^{(L-1)}\cdot\bX_\nu^{(L-1)}}{d_{L-1}}\Big)^2\Big(\rho_L
        -\ba^{*\intercal}
        \frac{\ba^*\circ\varphi'(\balpha_\mu)}{d_L}
        \Big)^2\,.
    \end{align}
    Notice that the only dependency on $\bW^{*(L)}$ is in $\balpha_\mu$. Therefore, considering a brand new set of variables  $Z_i\iid\mathcal{N}(0,1)$ we can recast the above as
    \begin{align}
         A_2^2\!\leq\! C\Big(\frac{1}{n}+\frac{1}{d_m}\Big)\sum_{\mu,\nu=1}^n\EE_{(t)}\Big(
         \frac{\bX_\mu^{(L-1)}\cdot\bX_\nu^{(L-1)}}{d_{L-1}}\Big)^2\EE_{\bZ,\ba^*}\Big(\rho_L
        -\sum_{i=1}^{d_L}\frac{(a_i^*)^2}{d_L}\varphi'\Big( Z_i\sqrt{\frac{\|\bX_\mu^{(L-1)}\|^2}{d_{L-1}}}\Big)
        \Big)^2\!.
    \end{align}
    With an exponentially small cost $O(e^{-cd_{L-2}\delta})$, for a given $\delta$ and some constant $c>0$ thanks to Lemma~\ref{lem:concentration_propagation}, one can assume that $\|\bX_\mu^{(\ell)}\|^2/d_{\ell}\geq 
    \EE\|\bX_\mu^{(\ell)}\|^2/d_{\ell}-\delta$, for $\ell=0,\dots,L-1$. Then, using Lemma~\ref{lem:orthogonality propagation}
    \begin{align}
        \EE\Big(\frac{\bX_\mu^{(L-1)}\cdot\bX_\nu^{(L-1)}}{d_{L-1}}\Big)^{4}\leq \frac{K}{d^2_{[0:d_{L-1}]}}
    \end{align}
    for an appropriate constant $K>0$. 
    Furthermore, it is easy to check that replacing the $(a_i^*)^2$'s by $1$ costs an overall $O(d_L^{-1})$. Also,
    \begin{align}
        \EE\Big(\rho_L
        -\frac{1}{d_L}\sum_{i=1}^{d_L}\varphi'\Big( Z_i\sqrt{\frac{\|\bX_\mu^{(L-1)}\|^2}{d_{L-1}}}\Big)
        \Big)^4\leq \frac{C}{d^2_{[0:d_{L-1}]}}
    \end{align}by Lemma~\ref{lem:moments_post_activations}. Then, using Cauchy-Schwartz inequality one readily gets:
    \begin{align}
        A_2^2=O\Big(\Big(1+\frac{n}{d_m}\Big)^2\frac{1}{d_m}\Big)\,.
    \end{align}    
\end{proof}

\begin{lemma}
    Recall the definition of $A_1^{  off}$ in \eqref{eq:A_1_off_def}. $A_1^{  off}=O\big((1+n/d_m)\sqrt{n}/d_m\big)$.
\end{lemma}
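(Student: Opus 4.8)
The plan is to bound $A_1^{  off}$ starting from its post–integration-by-parts form \eqref{eq:A_1off} (equivalently, from \eqref{eq:A_1_off_def} after the circular interpolation), which expresses it, up to the bounded $\tau$-integral, as the sum of a \enquote{triple-sum} piece carrying the tensor $U_{\mu\nu\eta}$ of \eqref{eq:U_munueta} and a \enquote{double-sum} piece carrying $U_{\mu\nu}$; the boundary contribution at $\tau=0$ drops, because there $\balpha_\mu(\tau)$ reduces to $\tilde\bW^*\bX_\mu^{(L-1)}/\sqrt{d_{L-1}}$, which under $\EE_{\tilde\bW^*}$ is independent of everything else and has zero mean since $\varphi$ is odd. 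Both pieces have the shape $\EE_{(t)}(\tfrac1n\log\cZ_t-\bar f_{n,t})\times(\text{weighted index sum})$, so the first move is Cauchy--Schwarz in $\EE_{(t)}$: it peels off $\sqrt{\mathbb{V}_{(t)}(\tfrac1n\log\cZ_t)}=O(\sqrt{1/n+1/d_m})$ by Theorem~\ref{th:Z_k_concentr}, leaving the square root of the second moment of the index sum, and a further Cauchy--Schwarz absorbs the $\tau$-integral over $[0,\pi/2]$ into a constant.

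The core is the estimate of $\EE_{(t)}(\text{weighted index sum})^2$, a sum over two independent copies of the index tuple. I would condition on the $\sigma$-algebra $\mathcal F$ generated by all post-activations $(\bX_\mu^{(\ell)})$, the teacher weights $\btheta^{*(L)}$ and the auxiliary matrix $\tilde\bW^*$, so that every scalar product $\bX_\mu^{(L-1)}\!\cdot\bX_\nu^{(L-1)}/d_{L-1}$ and every $\balpha$-dependent factor is $\mathcal F$-measurable, and use Lemma~\ref{lem:propertiesPout}: conditionally on $\mathcal F$ one has $\EE[U_{\mu\nu}\mid\mathcal F]=\EE[U_{\mu\nu\eta}\mid\mathcal F]=0$ for $\mu\neq\nu$, the identity $\EE[U_{\mu\mu}\mid S_{t\mu}]=0$ (i.e.\ $\EE[u_\mu''+(u_\mu')^2\mid S_{t\mu}]=0$), and the uniform bounds $\EE[U_{\mu\nu}^2\mid\mathcal F]\le C(f)$, $\EE[U_{\mu\nu\eta}^2\mid\mathcal F]\le C'(f)$. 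Since for $\mu\neq\nu$ the $u$-derivatives at distinct data indices are conditionally independent and mean zero, the conditional expectation of the product of the two $U$-tensors vanishes unless every data index occurring in the monomial has total multiplicity at least two; this kills every configuration with an \enquote{isolated} data index and leaves only a handful of families, each with a bounded number of free summation indices. In each family one removes the $U$'s via $\EE[U^2\mid\mathcal F]\le C$ and bounds what remains — products of scalar products $\bX_\mu^{(L-1)}\!\cdot\bX_\nu^{(L-1)}/d_{L-1}$ and of the $\tfrac1{d_L}$-normalised inner sums over the $\balpha$'s — using the quasi-orthogonality $\EE|\bX_\mu^{(L-1)}\!\cdot\bX_\nu^{(L-1)}/d_{L-1}|^k=O(d_m^{-k/2})$ for $\mu\neq\nu$ (Lemma~\ref{lem:orthogonality propagation}), the Lipschitzness and boundedness of $\varphi,\varphi'$ and $\EE(a_i^*)^2=1$, the centred Gaussian average over $\ba^*$ supplying an extra $d_L^{-1/2}$ on top of the explicit $d_L^{-1}$ (so the relevant inner sum has $L^2$-norm $O(d_m^{-1})$). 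Counting then shows that each surviving family contributes to the squared index sum at most at the level $(1+n/d_m)^2 n/d_m^2$ — the \enquote{$\eta=\mu$, $\eta'=\mu'$, $\mu=\mu'$, $\nu=\nu'$} family yielding the $n^2/d_m^2$ part and the \enquote{$\{\mu,\nu,\eta\}=\{\mu',\nu',\eta'\}$, all distinct} family the $n^3/d_m^3$ part — so the triple-sum piece is $O((1+n/d_m)\sqrt n/d_m)$.

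For the double-sum piece an extra ingredient is needed. After the centring collapses it to its diagonal $\mu=\mu'$, $\nu=\nu'$, one faces $\sum_{\mu<\nu}\EE[(u_\mu')^2(u_\nu')^2(\bX_\mu^{(L-1)}\!\cdot\bX_\nu^{(L-1)}/d_{L-1})^2 R_{\mu\nu}(\tau)^2]$ with $R_{\mu\nu}(\tau):=\tfrac1{d_L}\sum_i(\varphi'(\alpha_{i\mu}(\tau))-\rho_L)\varphi'(\alpha_{i\nu})$; the naive bound $|R_{\mu\nu}(\tau)|=O(1)$ is too weak. One instead shows $\EE|R_{\mu\nu}(\tau)|^k=O(d_m^{-k/2})$ for $\mu\neq\nu$: conditionally on the layer-$(L-1)$ post-activations the pairs $(\alpha_{i\mu}(\tau),\alpha_{i\nu})_i$ are i.i.d., so by the tail-integration behind Lemma~\ref{lem:moments_post_activations} and \eqref{eq:moment_control_1}, $R_{\mu\nu}(\tau)$ concentrates at rate $d_L^{-1/2}$ around its conditional mean $\EE[\varphi'(\alpha_{1\mu}(\tau))\varphi'(\alpha_{1\nu})]-\rho_L\EE[\varphi'(\alpha_{1\nu})]$, which is itself $O(d_m^{-1/2})$: by the orthogonal-approximation bound \eqref{eq:approx2} of Lemma~\ref{lem:approx} the product mean factorises up to an error $O(|\bX_\mu^{(L-1)}\!\cdot\bX_\nu^{(L-1)}|/d_{L-1})$, while $\EE_Z\varphi'(Z\|\bX_\mu^{(L-1)}\|/\sqrt{d_{L-1}})-\rho_L$ is controlled by $|\,\|\bX_\mu^{(L-1)}\|^2/d_{L-1}-\sigma_{L-1}|=O(d_m^{-1/2})$ (Lemma~\ref{lem:moments_post_activations}), after restricting to the high-probability event where the layer-$(L-1)$ norms concentrate, whose complement costs only an exponentially small error by Lemma~\ref{lem:concentration_propagation}. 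Combining $\EE(\bX_\mu^{(L-1)}\!\cdot\bX_\nu^{(L-1)}/d_{L-1})^4=O(d_m^{-2})$ with $\EE R_{\mu\nu}(\tau)^4=O(d_m^{-2})$ via Cauchy--Schwarz makes each summand $O(d_m^{-2})$, hence the squared index sum $O(n^2/d_m^2)$ and the double-sum piece $O(\sqrt{1/n+1/d_m}\cdot n/d_m)\le O((1+n/d_m)\sqrt n/d_m)$, using $(1/n+1/d_m)n^2/d_m^2\le(1+n/d_m)^2n/d_m^2$.

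I expect the main obstacle to be the combinatorial bookkeeping of the second paragraph: one must enumerate the index configurations that survive the conditional centring for the triple sum — crucially exploiting the cancellation $\EE[u_\mu''+(u_\mu')^2\mid S_{t\mu}]=0$, which annihilates exactly those configurations that would otherwise keep too many free data indices — and then check in every surviving family that each unconstrained summation index is accompanied by a quasi-orthogonality factor of size $d_m^{-1/2}$ or by a $d_L^{-1/2}$ normalisation factor, so that the claimed rate is not exceeded.
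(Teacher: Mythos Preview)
Your proposal is correct and follows essentially the same route as the paper: split $A_1^{\text{off}}$ into the triple-sum piece $A_{11}^{\text{off}}(\tau)$ and the double-sum piece $A_{12}^{\text{off}}(\tau)$, peel off the free-entropy fluctuation by Cauchy--Schwarz and Theorem~\ref{th:Z_k_concentr}, then use the centring and boundedness of $U_{\mu\nu},U_{\mu\nu\eta}$ from Lemma~\ref{lem:propertiesPout} together with Lemmas~\ref{lem:approx}, \ref{lem:moments_post_activations} and \ref{lem:orthogonality propagation} to control the remaining index sums. The only cosmetic difference is that for $A_{12}^{\text{off}}$ the paper splits the $i$-sum in $R_{\mu\nu}(\tau)$ into its diagonal $i=j$ and off-diagonal $i\neq j$ parts and computes the conditional mean directly, whereas you package the same argument as a moment bound $\EE|R_{\mu\nu}(\tau)|^k=O(d_m^{-k/2})$; both yield the same rate.
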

\begin{proof}
Let us take \eqref{eq:A_1off} as a starting point. Define the two contributions:
\begin{align}
    A_{11}^{  off}(\tau)&=\,\EE_{(t)}\Big(\frac{1}{n}\log\cZ_t-\bar f_{n,t}\Big)\sum_{\mu<\nu,1}^n\sum_{\eta=1}^nU_{\mu\nu\eta}\frac{\bX_\mu^{(L-1)}\cdot\bX_\eta^{(L-1)} }{d_{L-1}}\times \nonumber\\
    &\qquad\qquad \qquad\qquad\times\sum_{i=1}^{d_L}
    \frac{a_i^*\varphi'(\alpha_{i\eta})}{\sqrt{d_L}} 
    \frac{\varphi'(\alpha_{i\mu}(\tau))-\rho_L}{d_L} \varphi(\alpha_{i\nu})
\end{align}
and
\begin{align}
    A_{12}^{  off}(\tau)&=\,\EE_{(t)}\Big(\frac{1}{n}\log\cZ_t-\bar f_{n,t}\Big)\sum_{\mu<\nu,1}^n U_{\mu\nu}\frac{\bX_\mu^{(L-1)}\cdot\bX_\nu^{(L-1)} }{d_{L-1}}\sum_{i=1}^{d_L}\frac{\varphi'(\alpha_{i\mu}(\tau))-\rho_L}{d_L}\varphi'(\alpha_{i\nu})\,.
\end{align}Then $A_1^{  off}=\int_0^{\pi/2}\cos\tau\big(A_{11}^{  off}(\tau)+A_{12}^{  off}(\tau)\big)d\tau$.

Let us start from $A_{11}^{  off}(\tau)$. Using Cauchy-Schwartz's inequality, and $\EE[U_{\mu\nu\eta}\mid S_{t\mu},S_{t\nu},S_{t\eta}]=0$ together with $\EE[U_{\mu\nu\eta}^2\mid S_{t\mu},S_{t\nu},S_{t\eta}]\leq C'(f)$, one can bound $(A_{11}^{  off}(\tau))^2$ as
\begin{align}
    (A_{11}^{  off}(\tau))^2&\leq C''(f)\mathbb{V}_{(t)}\Big[\frac{1}{n}\log\cZ_t\Big]\sum_{\mu<\nu,1}^n
    \sum_{\eta=1}^n\EE_{(t)} \Big(\frac{\bX_\mu^{(L-1)}\cdot\bX_\nu^{(L-1)} }{d_{L-1}} \Big)^2
    \nonumber\\
    &\qquad\qquad\qquad\times\frac{1}{d_L^2}\sum_{i=1}^{d_L}\frac{(\varphi'(\alpha_{i\mu}(\tau))-\rho_L)^2}{d_L}(X^{(L)}_{i\nu})^2
    (\varphi'(\alpha_{i\eta}))^2
\end{align}
where we have computed the expectation over the $\ba^*$'s. Since $\varphi$ is Lipschitz, we can easily continue the bound as in:
\begin{align}
    (A_{11}^{  off}(\tau))^2&\leq C'''(f)\mathbb{V}_{(t)}\Big[\frac{1}{n}\log\cZ_t\Big]\sum_{\mu<\nu,1}^n
    \sum_{\eta=1}^n\EE_{(t)} \Big(\frac{\bX_\mu^{(L-1)}\cdot\bX_\nu^{(L-1)} }{d_{L-1}} \Big)^2\frac{\|\bX_\nu^{(L)}\|^2}{d_L^3}\,.
\end{align}
By splitting the last expectation again by Cauchy-Schwartz and Lemmas~\ref{lem:moments_post_activations} and \ref{lem:orthogonality propagation} we have
\begin{align}
    (A_{11}^{  off}(\tau))^2=O\Big(\Big(1+\frac{n}{d_m}\Big)^2\frac{n}{d_m^2}\Big)
\end{align}where we have taken into account possible matchings $\eta=\mu$ or $\eta=\nu$. The above bound is uniform in $\tau$.

Analogous arguments apply to $A_{12}^{  off}(\tau))$:
\begin{align}
    (A_{12}^{  off}(\tau))^2&=C(f) \mathbb{V}_{(t)}\Big[\frac{1}{n}\log\cZ_t\Big]\sum_{\mu<\nu,1}^n \EE_{(t)} \Big(\frac{\bX_\mu^{(L-1)}\cdot\bX_\nu^{(L-1)} }{d_{L-1}}\Big)^2(\mathbbm{1}(B_L)+\mathbbm{1}(B_L^c))\nonumber\\
    &\qquad\qquad\qquad\times \sum_{i,j=1}^{d_L}\frac{(\varphi'(\alpha_{i\mu}(\tau))-\rho_L)\varphi'(\alpha_{i\nu})}{d_L}
    \frac{(\varphi'(\alpha_{j\mu}(\tau))-\rho_L)\varphi'(\alpha_{j\nu})}{d_L}
    \,.
\end{align}In the above we have introduced the event 
\begin{align}
    B_L:=\{\|\bX_\mu^{(L-1)}\|^2\geq d_{L-1}\sigma_{L-1}/2\}
\end{align}that, thanks to \eqref{eq:Markov_moments_post_activ} with $k=2$ and $\delta=\sigma_{L-1}/2$, occurs with probability at least $1-O((d_{[0:L-1]})^{-2})$. Since the double sum is bounded by a constant thanks to the Lipschitzness of $\varphi$, it is easy to see that the overall contribution deriving from $\mathbbm{1}(B_L^c)$ is $O((1/n+1/d_m)n^2/d_m^2)$.

We thus shift our focus on the contribution deriving from $\mathbbm{1}(B_L)$. The diagonal terms, $i=j$, in the double sum above are easily bounded by $O(d_L^{-1})$ because $\varphi$ is Lipschitz. Let us analyse instead the off-diagonal terms for $i\neq j$. The expectation w.r.t. $\EE_{\bW^{*(L)},\tilde\bW^*}$ only can be pushed till the mentioned double sum, and for $i\neq j$ it yields an overall contribution:
\begin{align}
    \frac{d_L(d_L-1)}{d_L^2}\big(\EE_{\bW^{*(L)},\tilde\bW^*}[(\varphi'(\alpha_{1\mu}(\tau))-\rho_L)\varphi'(\alpha_{1\nu})]\big)^2 \,.
\end{align}
The above is an expectation of a bounded random variable w.r.t.\ two Gaussians with correlation
\begin{align}
    \EE_{\bW^{*(L)},\tilde\bW^*} \alpha_{1\mu}(\tau)\,
    \alpha_{1\nu}= \sin\tau\frac{\bX_\mu^{(L-1)}\cdot\bX_\nu^{(L-1)}}{d_{L-1}}\,.
\end{align}
Therefore, by Lemma~\ref{lem:approx}
\begin{align}
    \EE_{\bW^{*(L)},\tilde\bW^*}[(\varphi'(\alpha_{1\mu}(\tau))-\rho_L)\varphi'(\alpha_{1\nu})]&=\EE_{\bW^{*(L)},\tilde\bW^*}[(\varphi'(\alpha_{1\mu}(\tau))-\rho_L)]\EE_{\bW^{*(L)}}\varphi'(\alpha_{1\nu}) \nonumber\\
    &+O\Big(\frac{\bX_\mu^{(L-1)}\cdot\bX_\nu^{(L-1)}}{\|\bX_\nu^{(L-1)}\|^2}\Big).
\end{align}
Since $\varphi$ is Lipschitz, and following from the proof of Lemma~\ref{lem:moments_post_activations}, restricted to the event $B_L$ we have
\begin{align}
    \EE_{\bW^{*(L)},\tilde\bW^*}[(\varphi'(\alpha_{1\mu}(\tau))-\rho_L)\varphi'(\alpha_{1\nu})]=O\Big(\Big({\frac{\|\bX_\mu^{(L-1)}\|^2}{d_{L-1}}}-{\sigma_{L-1}}\Big)+\frac{\bX_\mu^{(L-1)}\cdot\bX_\nu^{(L-1)}}{d_{L-1}}\Big).
\end{align}
By plugging this back into the inequality for $(A_{12}^{  off}(\tau))^2$, and using Lemmas \ref{lem:orthogonality propagation} and \ref{lem:moments_post_activations}, the event $B_L$ contributes for $O((1+n/{d_m})n/{d_m^2})$.

The statement follows from selecting the worst of the convergence rates, that is the one of $A_{11}^{  off}(\tau)$.

\end{proof}

\begin{lemma}\label{lem:A_3-A_1}
        $A_3-A_1=\Big(\Big(1+\sqrt{\frac{n}{d_m}}\Big)\frac{1}{\sqrt{d_m}}\Big)$.
\end{lemma}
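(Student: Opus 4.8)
The plan is to bound the first line of \eqref{eq:main_A3-A1}, since its second line has already been recast as $A_1^{  off}$ in \eqref{eq:A_1_off_def} and controlled in the preceding Lemma. Write $D:=\tfrac12\EE_{(t)}\big(\tfrac1n\log\cZ_t-\bar f_{n,t}\big)\sum_{\mu=1}^nU_{\mu\mu}\,r_\mu$ for that first line, where $r_\mu:=\epsilon_L-\|\varphi(\balpha_\mu)\|^2/d_L+\rho_L\,\balpha_\mu^\intercal\varphi(\balpha_\mu)/d_L$. First I would apply Cauchy--Schwarz under $\EE_{(t)}$ to separate the centred free entropy from the sum over the data index: $D^2\le\tfrac14\,\mathbb{V}_{(t)}\big(\tfrac1n\log\cZ_t\big)\,\EE_{(t)}\big(\sum_{\mu}U_{\mu\mu}r_\mu\big)^2$, where the first factor is $O(1/n+1/d_m)$ by Theorem~\ref{th:Z_k_concentr}.

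For the second factor I would expand the square into a double sum over $\mu,\nu$. Conditioning on all pre-activations $(\balpha_\mu)_\mu$ and all $(S_{t\mu})_\mu$ --- equivalently, on everything except the output noise $(Z_\mu,\bA_\mu)$ --- the factors $U_{\mu\mu}$ and $U_{\nu\nu}$ become independent with conditional mean zero for $\mu\neq\nu$ by Lemma~\ref{lem:propertiesPout}, whereas each $r_\mu$ is a function of $\balpha_\mu$ alone and hence measurable with respect to this conditioning; thus all off-diagonal terms vanish. On the diagonal, the bound $\EE[U_{\mu\mu}^2\mid S_{t\mu}]\le C(f)$ from Lemma~\ref{lem:propertiesPout} (applied after conditioning) leaves $\EE_{(t)}\big(\sum_{\mu}U_{\mu\mu}r_\mu\big)^2\le C(f)\sum_{\mu=1}^n\EE_{(t)}r_\mu^2$.

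The crux is to show $\EE_{(t)}r_\mu^2=O(1/d_m)$, and this is exactly where the definitions of $\rho_L,\epsilon_L$ are used. Conditionally on $\bX_\mu^{(L-1)}$ the entries of $\balpha_\mu$ are i.i.d.\ centred Gaussians of variance $\|\bX_\mu^{(L-1)}\|^2/d_{L-1}$, so Lemma~\ref{lem:moments_post_activations} with $k=2$ gives $\EE\big(\|\varphi(\balpha_\mu)\|^2/d_L-\sigma_L\big)^2=O(1/d_m)$ (take $g=\varphi^2$) and $\EE\big(\balpha_\mu^\intercal\varphi(\balpha_\mu)/d_L-\EE_Z[Z\sqrt{\sigma_{L-1}}\,\varphi(Z\sqrt{\sigma_{L-1}})]\big)^2=O(1/d_m)$ (take $g(x)=x\varphi(x)$). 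A Gaussian integration by parts identifies $\EE_Z[Z\sqrt{\sigma_{L-1}}\,\varphi(Z\sqrt{\sigma_{L-1}})]=\sigma_{L-1}\EE_Z\varphi'(Z\sqrt{\sigma_{L-1}})=\sigma_{L-1}\rho_L$, so by the triangle inequality in $L^2$ one gets $(\EE_{(t)}r_\mu^2)^{1/2}\le O(d_m^{-1/2})+|\epsilon_L-\sigma_L+\rho_L^2\sigma_{L-1}|$, and the last term vanishes identically since $\epsilon_L=\sigma_L-\sigma_{L-1}\rho_L^2$. Collecting the pieces, $D^2\le C(1/n+1/d_m)\cdot n\cdot O(1/d_m)=O(1/d_m+n/d_m^2)$, whence $|D|=O\big((1+\sqrt{n/d_m})/\sqrt{d_m}\big)$; combined with the preceding bound on $A_1^{  off}$, this yields the claim.

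The step I expect to be the main obstacle is the vanishing of $r_\mu$ in the thermodynamic limit: this is the quantitative Gaussian Equivalence identity, and it relies both on the precise definitions of $\rho_L$ and $\epsilon_L$ and on the $L^2$-concentration of post-activation statistics from Lemma~\ref{lem:moments_post_activations}. The remaining manipulations --- the Cauchy--Schwarz splittings and the collapse of the double sum to its diagonal --- are routine once the conditional-expectation and conditional-variance identities of Lemma~\ref{lem:propertiesPout} are in hand.
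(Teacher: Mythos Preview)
Your proposal is correct and follows essentially the same approach as the paper's own proof: Cauchy--Schwarz to split off the free-entropy variance, collapse of the double sum to its diagonal via $\EE[U_{\mu\mu}\mid S_{t\mu}]=0$, the bound $\EE[U_{\mu\mu}^2\mid S_{t\mu}]\le C(f)$, and then $\EE r_\mu^2=O(1/d_m)$ from Lemma~\ref{lem:moments_post_activations} together with the identity $\epsilon_L=\sigma_L-\sigma_{L-1}\rho_L^2$. You are, if anything, more explicit than the paper about the conditioning that makes the off-diagonal vanish and about the Stein identity $\EE_Z[Z\sqrt{\sigma_{L-1}}\,\varphi(Z\sqrt{\sigma_{L-1}})]=\sigma_{L-1}\rho_L$, but the logic is the same.
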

\begin{proof}
    We take up the computation from \eqref{eq:main_A3-A1}. We have already proved that its second line is controlled with $O((1+n/d_m)\sqrt{n}/d_m)$. To bound the first line of \eqref{eq:main_A3-A1}, that we denote by $D$ hereby, it is sufficient to use Cauchy-Schwartz's inequality first, and Lemma~\ref{lem:propertiesPout}:
    \begin{align}
        D^2\leq Cn\mathbb{V}_{(t)}\Big[\frac{1}{n}\log\mathcal{Z}_t\Big]\EE\Big[\epsilon_L-\frac{\|\bX_1^{(L)}\|^2}{d_L} +\rho_L\frac{\balpha_1^\intercal\varphi(\balpha_1)}{d_L} \Big]^2
    \end{align}for a positive constant $C>0$. Thanks to Lemma~\ref{lem:moments_post_activations} we can replace ${\|\bX_1^{(L)}\|^2}/{d_L}$ and ${\balpha_1^\intercal\varphi(\balpha_1)}/{d_L}$ with their expectations up to an $O(d_m^{-1})$:
    \begin{align}
        D^2\leq Cn\mathbb{V}_{(t)}\Big[\frac{1}{n}\log\mathcal{Z}_t\Big]\Big[O(d_m^{-1})+(\epsilon_L-\sigma_L+\rho_L^2\sigma_{L-1} )^2\Big]=O\Big(\frac{1}{d_m}\Big(1+\frac{n}{d_m}\Big)\Big),
    \end{align}where we used the definition of $\epsilon_L$. Combining the rate for $D$ and the one for $A_{1}^{  off}$ we thus get the statement.
\end{proof}

So far, we have proved that the free entropies of the original model and the reduced are as closed as desired. It is finally time to deal with the first term of \eqref{eq:MI_definition}.

\begin{lemma}\label{lem:Psi_constant_MI}
Define
    \begin{align}
        \Psi_k:=\EE\log P_{  out}\Big(Y^{(k)}_1\mid \eta_k\frac{\ba^{* (k)\intercal}\bX^{(k)}_1}{\sqrt{d_k}}+\sqrt{\gamma_k}\xi^{* (k)}_{1}\Big)
    \end{align}where $\ba^{* (k)}\in\mathbb{R}^{d_k}$,  $k=L$ or $L-1$, and $\eta_k,\gamma_k$ are defined in \eqref{eq:coeffs}. Recall that $$Y_1^{(k)}\sim P_{out}\Big(\cdot\mid \eta_k\frac{\ba^{*(k)\intercal}\bX^{(k)}_1}{\sqrt{d_k}}+\sqrt{\gamma_k}\xi^{* (k)}_{1}\Big).$$ 
    Then
    \begin{align}
        |\Psi_L-\Psi_{L-1}|=O(d_m^{-1/2}).
    \end{align}
\end{lemma}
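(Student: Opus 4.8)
The plan is to interpolate between the last-layer pre-activation and the reduced one by means of the single-sample ($\mu=1$) restriction of the interpolation \eqref{eq:interpol_S_s} already used for Theorem~\ref{thm:reduction}; as there, we take $\rho=1$, $\epsilon=0$, the general case being a harmless modification. Write $P:=P_{out}$, $u_y(x):=\log P(y\mid x)$ (with $P^x,P^{xx}$ the $x$-derivatives of $P$, as above), and introduce the \emph{fixed} function
\[
h(s):=\int dy\,P(y\mid s)\,u_y(s)=\EE_{Y\sim P(\cdot\mid s)}\log P(Y\mid s).
\]
Since the interpolating responses obey $Y_{t,1}\sim P(\cdot\mid S_{t,1})$, conditioning on $S_{t,1}$ gives $\EE\log P(Y_{t,1}\mid S_{t,1})=\EE\,h(S_{t,1})$ for every $t$. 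Moreover $S_{t,1}$ equals $\ba^{*\intercal}\bX_1^{(L)}/\sqrt{d_L}$ at $t=0$ and $\rho_L\bv^{*\intercal}\bX_1^{(L-1)}/\sqrt{d_{L-1}}+\sqrt{\epsilon_L}\,\zeta_1^{*(L)}$ at $t=1$, where $\bv^*$ is a standard Gaussian vector and $\zeta_1^{*(L)}$ a standard Gaussian; since $\eta_L=1,\gamma_L=0$ and $\eta_{L-1}=\rho_L,\gamma_{L-1}=\epsilon_L$, the former is exactly the input of $\Psi_L$ and the latter that of $\Psi_{L-1}$. Hence $|\Psi_L-\Psi_{L-1}|\le\int_0^1\big|\frac{d}{dt}\EE\,h(S_{t,1})\big|\,dt$, and it suffices to bound the integrand by $O(d_m^{-1/2})$ uniformly on $(0,1)$.

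First I would record the regularity of $h$. Differentiating under the integral (legitimate since $f\in C^2$, via dominated convergence and Lemma~\ref{lem:bound_P_out_der}) and using $\int dy\,P^x(y\mid s)=\int dy\,P^{xx}(y\mid s)=0$, one obtains $h'(s)=\int dy\,P^x(y\mid s)u_y(s)$ and
\[
h''(s)=\int dy\,P^{xx}(y\mid s)\,u_y(s)+\int dy\,\frac{(P^x(y\mid s))^2}{P(y\mid s)}.
\]
The last integral equals $\EE_{Y\sim P(\cdot\mid s)}[(P^x/P)^2]=O(1)$ by Lemma~\ref{lem:bound_P_out_der}. For the first, the bound $P(y\mid s)\le(2\pi\Delta)^{-1/2}$ and Jensen's inequality applied to the convex map $u\mapsto e^{-u}$ inside the $\bA$-average of \eqref{eq:output_kernel_vs_Gauss_channel} give $P(y\mid s)\ge(2\pi\Delta)^{-1/2}\exp\big(-\frac{1}{2\Delta}\EE_\bA(y-f(s;\bA))^2\big)$, whence $|u_y(s)|\le C(f)(1+y^2+s^2)$; inserting $y=f(s;\bA_1)+\sqrt\Delta Z_1$ and $|f(s;\bA)|\le|f(0;\bA)|+C|s|$ gives $|u_{Y_{t,1}}(s)|\le C(f)(1+Z_1^2+s^2)$. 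Combined with Lemma~\ref{lem:bound_P_out_der}, this yields the polynomial bounds $|h'(s)|,|h''(s)|\le C(f)(1+s^2)$, which in particular justify the Gaussian integrations by parts used next.

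Finally I would compute $\frac{d}{dt}\EE\,h(S_{t,1})=\EE[h'(S_{t,1})\dot{S}_{t,1}]$ with
\[
\dot{S}_{t,1}=-\frac{\ba^{*\intercal}\varphi(\balpha_1)}{2\sqrt{1-t}\,\sqrt{d_L}}+\frac{\rho_L\bv^{*\intercal}\bX_1^{(L-1)}}{2\sqrt{t}\,\sqrt{d_{L-1}}}+\frac12\sqrt{\frac{\epsilon_L}{t}}\,\zeta_1^{*(L)},\qquad \balpha_1:=\frac{\bW^{*(L)}\bX_1^{(L-1)}}{\sqrt{d_{L-1}}}.
\]
Integrating by parts the Gaussians $\ba^*$, $\bv^*$ and $\zeta_1^{*(L)}$ in the three respective terms — each multiplies $h'(S_{t,1})$, a smooth function of these Gaussians with polynomially bounded derivative, and $S_{t,1}$ is linear in each of them — the singular prefactors $1/\sqrt{1-t}$ and $1/\sqrt{t}$ cancel, leaving
\[
\frac{d}{dt}\EE\,h(S_{t,1})=\frac12\EE\Big[h''(S_{t,1})\Big(\rho_L^2\frac{\|\bX_1^{(L-1)}\|^2}{d_{L-1}}+\epsilon_L-\frac{\|\bX_1^{(L)}\|^2}{d_L}\Big)\Big],
\]
using $\|\varphi(\balpha_1)\|^2=\|\bX_1^{(L)}\|^2$. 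Here the definition $\epsilon_L=\sigma_L-\rho_L^2\sigma_{L-1}$ is precisely what makes the bracket centred: $\rho_L^2\sigma_{L-1}+\epsilon_L-\sigma_L=0$. Therefore, by Lemma~\ref{lem:moments_post_activations} with $k=2$, the $L^2$-norm of the bracket is $O(d_m^{-1/2})$; meanwhile $\|h''(S_{t,1})\|_{L^2}=O(1)$, because $|h''(s)|\le C(f)(1+s^2)$ and, conditionally on the post-activations, $S_{t,1}$ is a centred Gaussian whose variance $(1-t)\|\bX_1^{(L)}\|^2/d_L+t\rho_L^2\|\bX_1^{(L-1)}\|^2/d_{L-1}+t\epsilon_L$ has all moments bounded uniformly in $t$ and in the dimensions, again by Lemma~\ref{lem:moments_post_activations}. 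Cauchy--Schwarz gives $\big|\frac{d}{dt}\EE\,h(S_{t,1})\big|=O(d_m^{-1/2})$ uniformly on $(0,1)$, and integrating in $t$ concludes. The only genuinely delicate point is the regularity of $h$ in the second step — transferring the at-most-quadratic growth of $|\log P_{out}|$ through the $\bA$-average and the readout $f$; everything else is a one-sample instance of the integration-by-parts machinery already deployed for Theorem~\ref{thm:reduction}.
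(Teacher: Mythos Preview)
Your proof is correct, but it follows a different path than the paper's. The paper first observes that, conditionally on the post-activations, the argument $\eta_k\ba^{*(k)\intercal}\bX_1^{(k)}/\sqrt{d_k}+\sqrt{\gamma_k}\,\xi_1^{*(k)}$ is a centred Gaussian, so it may be replaced in law by $Z\sqrt{\eta_k^2\|\bX_1^{(k)}\|^2/d_k+\gamma_k}$ with a fresh $Z\sim\mathcal N(0,1)$. It then interpolates \emph{linearly in the standard deviation}, setting $S(t)=t\sqrt{\|\bX_1^{(L)}\|^2/d_L}+(1-t)\sqrt{\rho_L^2\|\bX_1^{(L-1)}\|^2/d_{L-1}+\epsilon_L}$ and $\Psi(t)=\EE h(ZS(t))$. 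Differentiation in $t$ then produces $\dot\Psi(t)=\EE\big[\big(\tfrac{P^x}{P}+\tfrac{P^y}{P}f'\big)(Y_t\mid ZS(t))\,Z\dot S(t)\big]$ via the representation $Y_t=f(ZS(t);\bA)+\sqrt\Delta\tilde Z$, and Lemma~\ref{lem:bound_P_out_der} together with the independence of $Z$, $\tilde Z$ and $\dot S(t)$ yields $|\dot\Psi(t)|\le C(f)\,\EE|Z|\,\EE|\dot S(t)|$ directly---no Gaussian integration by parts and no growth estimate on $h''$ are needed. The quantity $\EE|\dot S(t)|$ is then the difference of two square roots whose radicands differ by $O(d_m^{-1/2})$ thanks to Lemma~\ref{lem:moments_post_activations} and the identity $\rho_L^2\sigma_{L-1}+\epsilon_L=\sigma_L$.

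You instead reuse the $\sqrt{1-t}/\sqrt t$ interpolation of Section~\ref{subsec:proof_red} and carry out the Gaussian integration by parts on $\ba^*,\bv^*,\zeta_1^{*(L)}$ to cancel the endpoint singularities, arriving at the variance difference multiplied by $h''(S_{t,1})$; this forces you to establish the polynomial bound $|h''(s)|\le C(f)(1+s^2)$, which in turn required the two-sided estimate on $|u_y(s)|$ via Jensen on \eqref{eq:output_kernel_vs_Gauss_channel}. Both routes land on the same key cancellation, but the paper's is shorter precisely because collapsing to a one-dimensional Gaussian of variable variance removes the need for Stein's lemma and for any regularity of $h$ beyond what Lemma~\ref{lem:bound_P_out_der} already gives. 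Your approach has the virtue of being the literal single-sample specialisation of the main argument, so nothing new is introduced; the paper's has the virtue of isolating that only the \emph{variance} of the pre-activation matters.
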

\begin{proof}
    Notice that $\ba^{* (k)}$ is independent on $\xi^{* (k)}$. Therefore, letting $Z\sim\mathcal{N}(0,1)$, we have
    \begin{align}
        \Psi_k=\EE \int dy P_{out}\Big(y\mid Z\sqrt{\eta_k^2\frac{\|\bX^{(k)}_1\|^2}{d_k}+\gamma_k}\Big)\log P_{out}\Big(y\mid Z\sqrt{\eta_k^2\frac{\|\bX^{(k)}_1\|^2}{d_k}+\gamma_k}\Big)\,.
    \end{align}
    Recall $\rho=1m,\epsilon=0$ out of convenience, without loss of generality. Then $\eta_L=1,\gamma_L=0$.
    It is then convenient to introduce
    \begin{align}
        S(t)=t\sqrt{\frac{\|\bX^{(L)}_1\|^2}{d_L}}+(1-t)\sqrt{\rho_L^2\frac{\|\bX^{(L-1)}_1\|^2}{d_{L-1}}+\epsilon_L}
    \end{align}and
    \begin{align}
        \Psi(t)=\EE \int dy P_{out}(y\mid ZS(t))\log P_{out}(y\mid ZS(t))\,.
    \end{align}It is then straightforward to see that $\Psi(0)=\Psi_{L-1}$ and $\Psi(1)=\Psi_L$.
    Using Lemma~\ref{lem:propertiesPout} the one can readily show that
    \begin{align}
        |\dot{\Psi}(t)|\leq C(f)\EE|Z|\EE|\dot{S}(t)|
    \end{align} for some $C(f)>0$. Using Lemma~\ref{lem:moments_post_activations}, and the fact that $\rho_L\sigma_{L-1}+\epsilon_L=\sigma_L$ one readily gets the result.
    
\end{proof}

For completeness we also prove the following Lemma, which relates the various mutual informations
\begin{lemma}
    \label{lem:relation_MIs}
    Recall the definition
    \begin{align}
        I_n(\bxi^*;\mathcal{D}\mid\btheta^*)= H(\mathcal{D}\mid\btheta^*)&-H(\mathcal{D}\mid\bxi^*,\btheta^*)=-\EE\log \EE_\xi P_{out}\Big(Y_1\mid \rho\frac{\ba^{*\intercal}\bX^{(L)}_1}{\sqrt{d_L}}+\sqrt{\epsilon}\xi\Big) \nonumber\\
        &+\EE\EE_{\xi^*}\log  P_{out}\Big(Y_1\mid \rho\frac{\ba^{*\intercal}\bX^{(L)}_1}{\sqrt{d_L}}+\sqrt{\epsilon}\xi^*\Big) 
    \end{align}where $\xi,\xi^*\sim\mathcal{N}(0,1)$ and $Y_1$ is distributed as in \eqref{eq:output_channel_labels}. Then
    \begin{align}
        I_n(\bxi^*;\mathcal{D}\mid\btheta^*)&=-\int dy \EE_{Z,\xi^*}P_{out}(y\mid Z\sqrt{\rho^2\sigma_L}+\sqrt{\epsilon}\xi^*)\log \EE_\xi
        P_{out}(y\mid Z\sqrt{\rho^2\sigma_L}+\sqrt{\epsilon}\xi) \nonumber\\
        &+\int dy \EE_{Z}P_{out}(y\mid Z\sqrt{\rho^2\sigma_L+\epsilon})\log
        P_{out}(y\mid Z\sqrt{\rho^2\sigma_L+\epsilon})+O(d_m^{-1/2})\label{eq:MIconstant_terms_appendix},
    \end{align}
    where $Z\sim\mathcal{N}(0,1)$.
\end{lemma}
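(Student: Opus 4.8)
The plan is to reduce \eqref{eq:MIconstant_terms_appendix} to two applications of the interpolation already carried out in the proof of Lemma~\ref{lem:Psi_constant_MI}. First I would use that, conditionally on $\bX_1^{(L)}$, the readout weights $\ba^*$ are standard Gaussian, so $\rho\,\ba^{*\intercal}\bX_1^{(L)}/\sqrt{d_L}=\rho\sqrt{v_n}\,Z$ with $v_n:=\|\bX_1^{(L)}\|^2/d_L$ and $Z\sim\mathcal N(0,1)$ independent of $v_n$. Second, I would absorb the $\xi^*$-average (and, inside the first term, the $\xi$-average) into a \emph{smoothed output kernel}
\begin{align*}
    \widetilde P_{out}(y\mid s):=\EE_\xi P_{out}(y\mid s+\sqrt{\epsilon}\,\xi)=\int \frac{dP_A(\bA)\,d\mathcal N(\xi)}{\sqrt{2\pi\Delta}}\exp\Big[-\frac1{2\Delta}\big(y-f(s+\sqrt{\epsilon}\,\xi;\bA)\big)^2\Big]\,,
\end{align*}
which is itself of the form \eqref{eq:output_kernel_vs_Gauss_channel}, with readout $\widetilde f(s;\bA,\xi):=f(s+\sqrt{\epsilon}\,\xi;\bA)$ (whose first and second $s$-derivatives are $f'$ and $f''$ at a shifted argument, hence bounded) and mixture variable $(\bA,\xi)$; hence Lemmas~\ref{lem:bound_P_out_der} and \ref{lem:propertiesPout} apply to $\widetilde P_{out}$ with constants depending only on $f$. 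After these substitutions the two terms defining $I_n(\bxi^*;\mathcal D\mid\btheta^*)$ read $-\EE_{v_n}\EE_Z\int dy\,\widetilde P_{out}(y\mid\rho\sqrt{v_n}Z)\log\widetilde P_{out}(y\mid\rho\sqrt{v_n}Z)$ and $\EE_{v_n}\EE_Z\int dy\,P_{out}(y\mid Z\sqrt{\rho^2v_n+\epsilon})\log P_{out}(y\mid Z\sqrt{\rho^2v_n+\epsilon})$, while the right-hand side of \eqref{eq:MIconstant_terms_appendix} is precisely these same quantities with $v_n$ replaced by $\sigma_L$ (after renaming $Z$ and $\xi^*$ and using $\rho^2\sigma_L+\epsilon=(\rho\sqrt{\sigma_L})^2+\epsilon$). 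So the statement reduces to showing that each of the two functionals changes by $O(d_m^{-1/2})$ when $v_n$ is swapped for $\sigma_L$.

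For each functional I would then run the interpolation of Lemma~\ref{lem:Psi_constant_MI}: with $\bar P\in\{\widetilde P_{out},P_{out}\}$ and Gaussian scale $\kappa(v)=\rho\sqrt{v}$ resp. $\kappa(v)=\sqrt{\rho^2v+\epsilon}$, set $s_t:=t\,\kappa(v_n)+(1-t)\,\kappa(\sigma_L)$ and $\Phi(t):=\EE_{v_n}\EE_Z\int dy\,\bar P(y\mid s_tZ)\log\bar P(y\mid s_tZ)$, so that the required difference is $\int_0^1\dot\Phi(t)\,dt$. Differentiating and using $\int dy\,\bar P^{x}(y\mid w)=\partial_w\int dy\,\bar P(y\mid w)=0$ leaves $\dot\Phi(t)=\EE_{v_n}[\dot s_t\,\EE_Z(Z\int dy\,\bar P^{x}(y\mid s_tZ)\log\bar P(y\mid s_tZ))]$; since $Z$ is independent of $v_n$ (hence of $\dot s_t$), this gives $|\dot\Phi(t)|\le\EE|Z|\,\EE|\dot s_t|\,\sup_w|\int dy\,\bar P^{x}(y\mid w)\log\bar P(y\mid w)|$. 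The supremum is $\le C(f)$ by Cauchy--Schwarz together with the bound $\EE[(\bar P^{x}/\bar P)^2\mid w]\le C(f)$ of Lemma~\ref{lem:propertiesPout} and the polynomial control on $\log\bar P$ of Lemma~\ref{lem:bound_P_out_der}/\cite[Lemma 17]{camilli-tiepova-barbier2023fundamental} --- exactly the estimate invoked in the proof of Lemma~\ref{lem:Psi_constant_MI}, valid for $\widetilde P_{out}$ as well by the remark above. Finally $|\dot s_t|=|\kappa(v_n)-\kappa(\sigma_L)|\le \rho\,\sigma_L^{-1/2}|v_n-\sigma_L|$ in both cases (from $\sqrt{v_n}+\sqrt{\sigma_L}\ge\sqrt{\sigma_L}$, resp. $\sqrt{\rho^2v_n+\epsilon}+\sqrt{\rho^2\sigma_L+\epsilon}\ge\rho\sqrt{\sigma_L}$), so $|\Phi(1)-\Phi(0)|\le C(f)\rho\,\sigma_L^{-1/2}\EE|Z|\,\EE\big|\|\bX_1^{(L)}\|^2/d_L-\sigma_L\big|$, and Lemma~\ref{lem:moments_post_activations} with $k=1$, $g=\varphi^2$, $\ell=L$ bounds the last expectation by $C_{1,\varphi}\,d_{[0:L]}^{-1/2}=O(d_m^{-1/2})$. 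Summing the two contributions yields \eqref{eq:MIconstant_terms_appendix}.

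I expect the only genuinely delicate step to be the uniform-in-$w$ bound $\sup_w|\int dy\,\bar P^{x}(y\mid w)\log\bar P(y\mid w)|\le C(f)$: the issue is that $\log\bar P(\cdot\mid w)$ is bounded above (by $-\tfrac12\log(2\pi\Delta)$) but not globally below, so controlling $\EE_{Y\mid w}[(\log\bar P(Y\mid w))^2]$ uniformly in $w$ rests on the moment estimates for the output kernel established in \cite[Lemma 17]{camilli-tiepova-barbier2023fundamental}; one must also check that these survive the replacement of $P_{out}$ by $\widetilde P_{out}$, which they do since $\widetilde f$ has the same boundedness of its first two derivatives as $f$. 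Everything else is the routine interpolation bookkeeping already present in the proof of Lemma~\ref{lem:Psi_constant_MI}, here specialised to the deterministic endpoint $\sigma_L$ rather than $\|\bX_1^{(L-1)}\|^2/d_{L-1}$.
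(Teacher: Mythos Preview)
Your proposal is correct and follows essentially the same route as the paper's proof: both exploit the distributional identity $\rho\,\ba^{*\intercal}\bX_1^{(L)}/\sqrt{d_L}\overset{d}{=}\rho\sqrt{v_n}\,Z$ conditionally on $\bX_1^{(L)}$, interpolate between $v_n$ and $\sigma_L$ exactly as in Lemma~\ref{lem:Psi_constant_MI}, and close with the concentration bound of Lemma~\ref{lem:moments_post_activations}. Your introduction of the smoothed kernel $\widetilde P_{out}$ is a convenient packaging that puts the $H(\mathcal D\mid\btheta^*)$ term on the same footing as the other one (the paper instead remarks that in that term $Z$ and $\xi^*$ cannot be merged, and keeps them separate throughout the interpolation), but this is not a methodological departure.
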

\begin{proof}
    For both terms the proof proceeds in the same way, by observing that
    \begin{align}
        \rho\frac{\ba^{*\intercal}\bX^{(L)}_1}{\sqrt{d_L}}\sim Z\sqrt{\rho^2\frac{\|\bX^{(L)}_1\|^2}{d_L}}
    \end{align}where $\sim$ denotes equality in distribution. Then one proceeds interpolating as in Lemma~\ref{lem:Psi_constant_MI}, and using the concentration in Lemma~\ref{lem:moments_post_activations}. Note that for $H(\mathcal{D}\mid\bxi^*,\btheta^*)$ it possible to join the Gaussians $Z$ and $\xi^*$ in a unique Gaussian, whereas for $H(\mathcal{D}\mid\btheta^*)$ this is not possible which explains the presence of the two variables $\xi,\xi^*$ in the first line of \eqref{eq:MIconstant_terms_appendix}.
\end{proof}

\section{Complete network reduction} \label{app:complete_reduction}
Iterating Theorem \ref{thm:reduction} it is possible to reduce the full network to a GLM in terms of mutual information between dataset and teacher weights. To prove Corollary~\ref{thm:MIAllequivalence} holds with the coefficients \eqref{eq:coeffs} it is sufficient to start from a network with $k$ layers left, and infer the coefficients $\eta_{k-1},\gamma_{k-1}$ starting from $\eta_k,\gamma_k$.

By the Layer Reduction Theorem we can replace
\begin{align}
    \eta_k \frac{\mathbf{a}^{*\intercal}\bX_\mu^{(k)}}{\sqrt{d_k}}+\sqrt{\gamma_k}\zeta_\mu^{* (k)}
    \longrightarrow 
    \eta_k\Big( \rho_k\frac{\mathbf{v}^{*\intercal}\bX_\mu^{(k-1)}}{\sqrt{d_{k-1}}}+\sqrt{\epsilon_k}\xi_\mu^*\Big)+\sqrt{\gamma_k}\zeta_\mu^{*(k)},
\end{align}
where $\ba^*\in\mathbb{R}^{d_k},\bv^*\in\mathbb{R}^{d_{k-1}}$ are two vectors with i.i.d.\ standard Gaussian components, and $\zeta_\mu^{*(k)},\xi_\mu^*$ are two standard Gaussians. Hence we infer the recursion
\begin{align}
    \eta_{k-1}=\eta_k\rho_k\,,\quad 
    \gamma_{k-1}=\eta_k^2\epsilon_k+\gamma_k\,.
\end{align}Using $\eta_L=\rho,\gamma_L=\epsilon$ then one readily gets \eqref{eq:coeffs} from the above recursion.

\section{Concentration of free energy}\label{app:concentr}
In this section we prove the concentration of the free energy for the interpolated model described in Section~\ref{subsec:proof_red}. For convenience we replicate here some of the main notations. We consider interpolating models of teacher and student between the nonlinear argument of the readout function for the $L$-layer neural network and its corresponding linearized model:
    \begin{align}
    S_{t\mu} &:= \sqrt{1-t}\frac{\ba^{*\intercal}}{\sqrt{d_L}}  \varphi\Big(\frac{\bW^{* (L)} \ \bX_\mu^{(L-1)}}{\sqrt{d_{L-1}}}\Big) + \sqrt{t}\rho_L \frac{\bv^{*  \intercal}\bX_\mu^{(L - 1)}}{\sqrt{d_{L - 1}}}+\sqrt{t \epsilon_L}\zeta_\mu^{* (L)}\,,\\
    s_{t\mu}&:= \sqrt{1-t}\frac{\ba^{ \intercal}}{\sqrt{d_{L}}}  \varphi\Big(\frac{\bW^{(L)} \ \bx_\mu^{(L-1)}}{\sqrt{d_{L-1}}}\Big) + \sqrt{t}\rho_L \frac{\bv^{ \intercal}\bx_\mu^{(L - 1)}}{\sqrt{d_{L - 1}}}+\sqrt{t \epsilon_L}\zeta_\mu^{ (L)}\,,
    \end{align} 
    and
    \begin{align}
        Y_{t\mu}=f(S_{t\mu};\bA_{\mu}) +\sqrt{\Delta}Z_{\mu}\,.
    \end{align}
   We recall the corresponding random free entropy $f_{n,t}$ and partition function $\cZ_t$
\begin{align}
    f_{n,t}=\frac{1}{n}\log\cZ_t=\frac{1}{n}\log\int dP(\btheta^{(L)})\EE_{\bv}\prod_{\mu=1}^n\EE_{\zeta_\mu^{(L)}} P_{out}(Y_{t\mu}|s_{t\mu})\,.
\end{align}
   The main result of this section is
   \begin{theorem}\label{th:Z_k_concentr_apx}
For the activation and readout function $\varphi,\,f$ taken as in Theorem~\ref{thm:reduction}, there exists a non-negative constant $C(f,\varphi)$ such that
\begin{align}
  \mathbb{V}\Big(\dfrac{1}{n}\log \cZ_t\Big)
    \leq C(f,\varphi)\Big(\dfrac{1}{n}+\frac{1}{d_m}\Big)\,.
\end{align}
\end{theorem}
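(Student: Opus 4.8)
The plan is to prove the bound through the three-step variance decomposition over the independent batches of random parameters,
\[
\mathbb{V}(f_{n,t}) = \EE\,\mathbb{V}_{\psi_1}(f_{n,t}) + \EE\,\mathbb{V}_{\psi_2}\big(\EE_{\psi_1}f_{n,t}\big) + \mathbb{V}_{\psi_3}\big(\EE_{\psi_1,\psi_2}f_{n,t}\big),
\]
with $\psi_1=(\bX^{(0)}_\mu,\zeta^{*(L)}_\mu,Z_\mu)_{\mu=1}^n$, $\psi_2=(\bA_\mu)_{\mu=1}^n$ and $\psi_3=((\bW^{*(\ell)})_{\ell=1}^L,\ba^*,\bv^*)$, and to show each of the three terms is $O(1/n+1/d_m)$. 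Since $\psi_1$ and $\psi_3$ are Gaussian, the associated variances are estimated via the Poincar\'e--Nash inequality; $\psi_2$ has a generic law, so $\EE\,\mathbb{V}_{\psi_2}(\EE_{\psi_1}f_{n,t})$ is handled with Efron--Stein, replacing a single $\bA_\mu$ by an i.i.d.\ copy and bounding the increment of $\EE_{\psi_1}f_{n,t}$ using the boundedness of $f',f''$ and Lemma~\ref{lem:bound_P_out_der}; this yields the $O(1/n)$ contribution with little effort.

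For the first term, Poincar\'e--Nash bounds $\EE\,\mathbb{V}_{\psi_1}(f_{n,t})$ by $n^{-2}\EE\sum_\mu[(\partial_{\zeta^{*(L)}_\mu}\log\cZ_t)^2+(\partial_{Z_\mu}\log\cZ_t)^2+\sum_{i=1}^{d_0}(\partial_{X^{(0)}_{i\mu}}\log\cZ_t)^2]$. The first two derivatives factor through $\langle P^y_\mu/P_\mu\rangle_t\,f'(S_{t\mu};\bA_\mu)$ and are pointwise $O(|Z_\mu|^2+1)$ by Lemma~\ref{lem:bound_P_out_der}, giving $O(1/n)$ after expectation. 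The derivative with respect to $X^{(0)}_{i\mu}$ is the delicate one since $\bX^{(0)}_\mu$ lies under $L$ layers of nonlinearity; I would package it using the matrices $\bB^{*[\ell_1:\ell_2]}_\nu$ (and their student analogues), turning the derivative into a short expression in $\ba^{*\intercal}\bB^{*[1:L]}_\mu$, $\bv^{*\intercal}\bB^{*[1:L-1]}_\mu$ and their student counterparts. In the proportional regime $d_{\ell-1}^{-1/2}\EE\|\bW^{*(\ell)}\|=O(1)$, and since $\varphi$ is Lipschitz this propagates to $\EE\|\bB^{*[\ell_1:\ell_2]}_\nu\|=O(1)$; combined with the Nishimori identities (Appendix~\ref{app:nishiID}) and Lemma~\ref{lem:propertiesPout}, the last sum also contributes $O(1/n)$.

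The third term, $\mathbb{V}_{\psi_3}(g)$ with $g:=\EE_{\psi_1,\psi_2}f_{n,t}$, is the crux. Poincar\'e--Nash reduces it to $\sum_\ell\sum_{i,j}\EE(\partial_{W^{*(\ell)}_{ij}}g)^2+\sum_i\EE(\partial_{a^*_i}g)^2+\sum_j\EE(\partial_{v^*_j}g)^2$. Differentiating $g$ in $W^{*(\ell)}_{ij}$ produces, by the chain rule, a term proportional to $(\ba^{*\intercal}\bB^{*[\ell+1:L]}_\mu\boldsymbol{\Phi}^{*(\ell-1)}_\mu)_i\,X^{(\ell-1)}_{j\mu}$ (plus the analogous $\bv^*$ term); a naive operator-norm estimate gives only $O(1)$ per pair $(i,j)$, hence $O(d_\ell)$ after summation --- far too large. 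To recover the missing factor $1/d_{\ell-1}$ I would run a second, \emph{circular} interpolation replacing $\bX^{(0)}_\mu$ by an independent copy, $\bX^{(0)}_\mu(\tau)=\bX^{(0)}_\mu\sin\tau+\tilde{\bX}^{(0)}_\mu\cos\tau$ with $\tau\in[0,\pi/2]$, so that $\dot{\bX}^{(0)}_\mu(\tau)$ is Gaussian and independent of $\bX^{(0)}_\mu(\tau)$ and $\EE[X^{(0)}_{i\nu}\dot{X}^{(0)}_{j\mu}(\tau)]=\delta_{\mu\nu}\delta_{ij}\cos\tau$. Writing $X^{(\ell-1)}_{j\mu}$ via the fundamental theorem of calculus, the $\tau=0$ endpoint decouples and vanishes by oddness of $\varphi$, while the $\tau$-integral contains $\sum_p B^{*[1:\ell-1]}_{jp}(\tau)\dot{X}^{(0)}_{p\mu}(\tau)$, which opens the way to an integration by parts in $\dot{X}^{(0)}_{p\mu}(\tau)$. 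One must keep in mind that not only $S_{t\mu}$ and $\langle P^y_\mu/P_\mu\rangle_t$ but every $\boldsymbol{\Phi}^{*(k)}_\mu$ inside $\bB^{*[\ell+1:L-1]}_\mu$ depends on $\dot{X}^{(0)}_{p\mu}(\tau)$, so this step spawns many terms; after bookkeeping, most are controlled directly by the Nishimori identity, Lemma~\ref{lem:propertiesPout} and the bound $\EE\|\bB^{*[\ell_1:\ell_2]}\|=O(1)$.

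The genuinely new contributions have the shape $\tfrac{C(f)}{d_{\ell-1}}\EE\int_0^{\pi/2}\Tr\big(\mathbf{E}^\intercal\,\text{diag}(\mathbf{e})\,\bG(\tau)\bG(\tau)^\intercal\,\text{diag}(\mathbf{e})\,\mathbf{E}\big)\,d\tau$, where $\mathbf{E}$ and $\bG(\tau)$ are products of rescaled weight matrices $\bW^{*(k)}/\sqrt{d_{k-1}}$ and derivative-diagonal matrices $\boldsymbol{\Phi}^{*(k)}_\mu(\tau),\boldsymbol{\Phi}^{*(k)}_\mu$, all of operator norm $O(1)$ in the proportional regime, and $\mathbf{e}=\ba^{*\intercal}\bB^{*[k_1:k_2]}_\mu$. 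Using $|\Tr(\bA\bB)|\le\|\bB\|\Tr\bA$ for a PSD matrix $\bA$, this is bounded by $d_{\ell-1}^{-1}\EE\int_0^{\pi/2}\|\mathbf{E}\|^2\|\bG(\tau)\|^2\|\mathbf{e}\|^2\,d\tau\le C(f,\varphi)/d_{\ell-1}$, since $\mathbf{e}$ has $O(1)$ norm by the $\|\bB\|=O(1)$ bound and Lipschitzness of $\varphi$. Summing over the finitely many layers, and treating $\partial_{a^*_i}g$, $\partial_{v^*_j}g$ identically (these are in fact easier, as $\ba^*,\bv^*$ enter only linearly at the readout), gives $O(1/d_m)$ for the third term, hence the claim. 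I expect the main obstacle to be precisely this last step: organising the post-integration-by-parts expansion so that every surviving term either decouples (vanishing conditional expectation, oddness of $\varphi$) or collapses to a trace of the PSD type above with all operator norms $O(1)$ --- an estimate that crucially relies on the proportional scaling $d_\ell\asymp d_m$ entering through $d_{\ell-1}^{-1/2}\EE\|\bW^{*(\ell)}\|=O(1)$.
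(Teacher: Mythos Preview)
Your proposal is correct and follows essentially the same route as the paper: the same three-batch variance decomposition $(\psi_1,\psi_2,\psi_3)$, Poincar\'e--Nash for $\psi_1$ and $\psi_3$, Efron--Stein/bounded differences for $\psi_2$, the $\bB^{*[\ell_1:\ell_2]}_\mu$ bookkeeping with $\EE\|\bB\|=O(1)$, and crucially the circular interpolation on $\bX^{(0)}_\mu$ followed by integration by parts in $\dot{X}^{(0)}_{p\mu}(\tau)$ and the PSD trace bound $|\Tr(\bA\bB)|\le\|\bB\|\Tr\bA$ to extract the extra $1/d_{\ell-1}$. One small slip: the naive estimate for $\sum_{i,j}\EE(\partial_{W^{*(\ell)}_{ij}}g)^2$ is $O(1)$ after summation (not $O(1)$ per pair giving $O(d_\ell)$), but this does not affect the argument since $O(1)$ is already too large and the circular-interpolation step is needed exactly as you describe.
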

We remark that Theorem~\ref{th:Z_k_concentr} is a direct consequence of the latter result, where $t$ is taken equal to $0$.
   
We  prove  Theorem~\ref{th:Z_k_concentr_apx} in several steps, first we show concentration   with respect to the set of parameters $\psi_1:=\{Z_\mu,\zeta^{*(L)}_\mu,\mathbf{X}_\mu^{(0)}\}_{\mu=1}^n$ using  classical Poincar\'e-Nash inequality, after this, with respect to $\psi_2:=\{\bA_\mu\}_{\mu}^n$ using the corollary of Efron-Stein inequality, and finally, with  respect to  $\psi_3:=\{(\mathbf{W}^{*(k)})_{\ell=1}^L,\bv^{*},\ba^*\}$. 
% For this we rewrite
% \begin{multline}
%      \mathbb{E}\Big(\frac{1}{n}\log \cZ_t-\mathbb{E}_{\bv^*,\mathbf{W}^*,\mathbf{X},\boldsymbol{\xi}^*,\bA,\bZ}\frac{1}{n}\log \cZ_t\Big)^2=\\
%     = \mathbb{E}\Big(\frac{1}{n}\log \cZ_t-\mathbb{E}_{\mathbf{X},\boldsymbol{\xi}^*,\bZ}\frac{1}{n}\log \cZ_t\Big)^2
%     +\mathbb{E}\Big(\frac{1}{n}\E_{\mathbf{X},\boldsymbol{\xi}^*,\bZ}\log \cZ_t-\E_{\bA}\mathbb{E}_{\mathbf{X},\boldsymbol{\xi}^*,\bZ}\frac{1}{n}\log \cZ_t\Big)^2+\\
%     +\mathbb{E}\Big(\mathbb E_{\psi}\frac{1}{n}\log \cZ_t-\mathbb{E}_{\mathbf{v}^*,\mathbf{W}^*}\mathbb E_{\psi}\frac{1}{n}\log \cZ_t\Big)^2
% \end{multline}
% where by $\mathbb{E}_{\psi}$ we denoted the joint expectation with respect to $\bZ$, $\boldsymbol{\xi}^*$, $\mathbf{X}$, and $\bA$. Also for brevity, in what follows,  by writing $\bZ$, $\boldsymbol{\xi}^*$, $\mathbf{X}$, and $\bA$ we mean the sets $\{Z_\mu\}_{\mu}$, etc.
We recall two classical concentration results, whose proofs can be found in \cite{Boucheron2004}, Chapter 3.
\begin{proposition}[Poincar\'e-Nash inequality]
  \label{prop:poincare}
	Let $\xi=[\xi_1,\ldots,\xi_K]^\intercal$ be a real Gaussian standard random vector. If $g:\R^K\mapsto \R$ is a continuously  differentiable function, then
\begin{align}\label{ineq:p-n}
	\mathbb{V} g(\xi)\le\E\|\nabla g(\xi)\|^2\,.
\end{align}
\end{proposition}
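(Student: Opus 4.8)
The plan is to establish the Gaussian Poincaré inequality by the one-parameter interpolation method combined with Gaussian integration by parts, i.e.\ exactly the two tools used throughout the body of this paper. First I would reduce to a convenient class of functions: if $\mathbb{E}\|\nabla g(\xi)\|^2=+\infty$ there is nothing to prove, so assume it is finite; then a routine mollification-and-truncation produces functions $g_m\in C^2(\mathbb{R}^K)$ with $g_m,\nabla g_m,\nabla^2 g_m$ bounded and with $g_m\to g$, $\nabla g_m\to\nabla g$ in $L^2$ of the standard Gaussian measure, so proving the inequality for each $g_m$ and letting $m\to\infty$ gives the general statement. From now on I assume $g\in C^2(\mathbb{R}^K)$ with bounded derivatives up to order two.

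Next I would introduce an independent copy $\tilde\xi$ of $\xi$ and, for $t\in[0,1]$, set $\xi_t:=\sqrt{t}\,\xi+\sqrt{1-t}\,\tilde\xi$, so that $\xi_t\sim\mathcal{N}(0,\mathbbm{1}_K)$ for every $t$, with $\xi_1=\xi$ and $\xi_0=\tilde\xi$ independent of $\xi$. The starting identity is
\[
\mathbb{V}g(\xi)=\mathbb{E}\big[g(\xi_1)g(\xi_1)\big]-\mathbb{E}\big[g(\xi_1)g(\xi_0)\big]=\int_0^1\frac{d}{dt}\,\mathbb{E}\big[g(\xi_1)g(\xi_t)\big]\,dt .
\]
I would then differentiate under the expectation using $\partial_t\xi_t=\xi/(2\sqrt{t})-\tilde\xi/(2\sqrt{1-t})$, and integrate by parts against the two independent standard Gaussians $\xi$ and $\tilde\xi$ via Stein's identity $\mathbb{E}[\xi_i\Phi]=\mathbb{E}[\partial_{\xi_i}\Phi]$ (and the analogue for $\tilde\xi$). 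The two terms proportional to $\mathbb{E}[g(\xi_1)\Delta g(\xi_t)]$ coming from the two integrations by parts cancel identically, leaving
\[
\frac{d}{dt}\,\mathbb{E}\big[g(\xi_1)g(\xi_t)\big]=\frac{1}{2\sqrt{t}}\,\mathbb{E}\big[\nabla g(\xi_1)\cdot\nabla g(\xi_t)\big].
\]

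To conclude I would bound the right-hand side uniformly in $t$: since $\xi_1$ and $\xi_t$ each have law $\mathcal{N}(0,\mathbbm{1}_K)$, Cauchy-Schwartz applied coordinate by coordinate gives $\mathbb{E}[\partial_j g(\xi_1)\partial_j g(\xi_t)]\le\mathbb{E}[(\partial_j g(\xi))^2]$ for every $j$, hence $\mathbb{E}[\nabla g(\xi_1)\cdot\nabla g(\xi_t)]\le\mathbb{E}\|\nabla g(\xi)\|^2$. Therefore
\[
\mathbb{V}g(\xi)\le\mathbb{E}\|\nabla g(\xi)\|^2\int_0^1\frac{dt}{2\sqrt{t}}=\mathbb{E}\|\nabla g(\xi)\|^2,
\]
which is the claim, with the sharp constant (saturated by linear $g$).

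The only point requiring genuine care is the endpoint $t\to0$: the factor $1/(2\sqrt{t})$ is singular there, so a priori the map $t\mapsto\mathbb{E}[g(\xi_1)g(\xi_t)]$ is merely absolutely continuous on $[0,1]$ rather than $C^1$ up to the boundary. This is precisely repaired by the uniform Cauchy-Schwartz bound above, which shows the derivative is $O(t^{-1/2})$ and hence integrable, legitimising both the interchange of derivative and expectation on $(0,1)$ and the use of the fundamental theorem of calculus up to the boundary. Since the whole computation is carried out directly in $\mathbb{R}^K$, no separate tensorisation step is needed, and the main (mild) obstacle is thus exactly this endpoint integrability together with the routine initial reduction to smooth $g$.
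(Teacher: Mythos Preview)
The paper does not actually prove this proposition: it merely states it as a classical result and refers the reader to \cite{Boucheron2004}, Chapter~3. So there is no ``paper's proof'' to compare against.

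Your interpolation argument is correct. It is essentially the semigroup (Ornstein--Uhlenbeck) proof of the Gaussian Poincar\'e inequality, rewritten in the ``two-replica'' interpolation language that the paper itself uses elsewhere: introduce an independent copy, interpolate, and let the Laplacian terms coming from Gaussian integration by parts against $\xi$ and $\tilde\xi$ cancel, leaving the gradient covariance term with the integrable $1/(2\sqrt t)$ weight. The Cauchy--Schwarz step is legitimate because $\xi_1$ and $\xi_t$ are both marginally $\mathcal N(0,\mathbbm 1_K)$, and your handling of the endpoint $t\to 0$ (and implicitly $t\to 1$, where the $1/\sqrt{1-t}$ singularity in $\partial_t\xi_t$ disappears after integration by parts) is fine for $g$ with bounded first and second derivatives; the reduction to that class by mollification/truncation is routine. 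This is one of the standard proofs covered in the cited reference, so your approach is fully compatible with what the paper intends.
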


\begin{proposition}[Bounded difference]
 \label{prop:efron-stein}   
 Let $\xi=[\xi_1,\ldots,\xi_K]^\intercal$ be a random vector with i.i.d. elements taking values in some space $\mathcal{A}$. If function $g:\mathcal{A}^K\mapsto \R$ satisfies 
 \begin{align}
     \sup_{1\leq i\leq K}\sup_{x_1,\ldots,x_K, x\prime_i\in \mathcal{A}}|g(x_1,\ldots,x_i,\ldots, x_K)-g(x_1,\ldots,x^\prime_i,\ldots, x_K)|
     \leq C
 \end{align} for some $C>0$, then 
\begin{align}\label{ineq:e-s}
	\mathbb{V}\{g(\xi)\}\le\dfrac{1}{4}KC^2.
\end{align}
\end{proposition}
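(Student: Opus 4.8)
\section*{Proof of Proposition~\ref{prop:efron-stein}}

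The plan is the classical martingale (Doob) decomposition combined with Popoviciu's variance inequality. First, observe that the bounded-difference hypothesis forces $g$ to be bounded: fixing any reference point $x^0\in\mathcal{A}^K$ and changing the coordinates one at a time, $|g(x)-g(x^0)|\le KC$, so $g(\xi)\in L^2$. Introduce the filtration $\mathcal{F}_i:=\sigma(\xi_1,\ldots,\xi_i)$ for $i=0,\ldots,K$ (with $\mathcal{F}_0$ trivial), and write the telescoping decomposition
\begin{align}
g(\xi)-\E g(\xi)=\sum_{i=1}^{K}\Delta_i\,,\qquad \Delta_i:=\E[g(\xi)\mid\mathcal{F}_i]-\E[g(\xi)\mid\mathcal{F}_{i-1}]\,.
\end{align}
Each $\Delta_i$ is $\mathcal{F}_i$-measurable and $\E[\Delta_i\mid\mathcal{F}_{i-1}]=0$, so $(\Delta_i)_{i\le K}$ is a martingale-difference sequence; hence $\E[\Delta_i\Delta_j]=0$ for $i\ne j$ and
\begin{align}
\mathbb{V}(g(\xi))=\sum_{i=1}^{K}\E[\Delta_i^2]\,.
\end{align}
It therefore suffices to prove $\E[\Delta_i^2]\le C^2/4$ for every $i$.

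Second, I would use the independence of the coordinates to turn the conditional expectations into partial integrals. Since $(\xi_{i+1},\ldots,\xi_K)$ is independent of $\mathcal{F}_i$, there is a measurable function $h_i$ with $\E[g(\xi)\mid\mathcal{F}_i]=h_i(\xi_1,\ldots,\xi_i)$, obtained by averaging $g$ over the law of $(\xi_{i+1},\ldots,\xi_K)$. Applying the bounded-difference bound pointwise inside this average and using the triangle inequality for integrals gives, for all arguments,
\begin{align}
|h_i(x_1,\ldots,x_{i-1},x_i)-h_i(x_1,\ldots,x_{i-1},x_i')|\le C\,.
\end{align}
Conditionally on $\mathcal{F}_{i-1}$, the quantity $h_i(\xi_1,\ldots,\xi_{i-1},\xi_i)$ thus depends only on $\xi_i$ and takes values in an interval of length at most $C$; moreover $\E[g(\xi)\mid\mathcal{F}_{i-1}]$ is exactly its conditional mean (the further average over $\xi_i$, using independence again). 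Consequently $\Delta_i$, conditionally on $\mathcal{F}_{i-1}$, is a centered random variable supported in an interval of length $\le C$.

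Third, I would invoke Popoviciu's inequality: any random variable $X$ with values in an interval of length $\ell$ satisfies $\mathbb{V}(X)\le\E[(X-m)^2]\le(\ell/2)^2$ with $m$ the midpoint. Applied conditionally on $\mathcal{F}_{i-1}$ this gives $\E[\Delta_i^2\mid\mathcal{F}_{i-1}]\le C^2/4$, hence $\E[\Delta_i^2]\le C^2/4$, and summing over $i\in[K]$ yields $\mathbb{V}(g(\xi))\le\tfrac14 KC^2$. Equivalently, one could start from the Efron--Stein inequality $\mathbb{V}(g(\xi))\le\tfrac12\sum_{i}\E[(g(\xi)-g(\xi^{(i)}))^2]$, where $\xi^{(i)}$ replaces $\xi_i$ by an i.i.d.\ copy, combine it with the identity $\E[(g(\xi)-g(\xi^{(i)}))^2\mid \xi_{-i}]=2\,\mathbb{V}_{\xi_i}(g)$, and apply Popoviciu's bound $\mathbb{V}_{\xi_i}(g)\le C^2/4$. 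The only point requiring genuine care --- the ``main obstacle'' --- is the use of independence: it is what makes $\E[g\mid\mathcal{F}_i]$ a true partial integral over the remaining coordinates, so that its oscillation in the single variable $\xi_i$ is still controlled by the one-coordinate constant $C$; without independence the conditional expectation could mix coordinates and the bound would degrade.
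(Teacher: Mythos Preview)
Your proof is correct; the Doob martingale decomposition together with Popoviciu's variance bound is the standard route to the sharp constant $1/4$, and your alternative Efron--Stein derivation is equally valid. The paper itself does not prove this proposition but simply cites \cite{Boucheron2004}, Chapter~3, where the Efron--Stein inequality and its bounded-difference corollary are established along essentially the same lines you sketch.
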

In what follows we  denote $P^y(y|x):=\partial_y P_{  out}(y|x)$ and $P^x(y|x):=\partial_x  P_{  out}(y|x)$. 
\begin{lemma}
\label{lemma:concentration_free}
  There exists a non-negative constant $C(f,\varphi$) such that
  \begin{align*}
      \mathbb{E}\Big(\frac{1}{n}\log \cZ_t-\mathbb{E}_{\psi_1}\frac{1}{n}\log \cZ_t\Big)^2\leq \frac{C(f,\varphi)}{n}\,.
  \end{align*}
\end{lemma}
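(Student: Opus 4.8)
The plan is to apply the Poincar\'e--Nash inequality (Proposition~\ref{prop:poincare}). Since all elements of $\psi_1=\{Z_\mu,\zeta^{*(L)}_\mu,\bX^{(0)}_\mu\}_{\mu=1}^n$ are independent standard Gaussians, conditionally on $(\psi_2,\psi_3)$ one has $\mathbb V_{\psi_1}\big(\tfrac1n\log\cZ_t\big)\le \mathbb E_{\psi_1}\big\|\nabla_{\psi_1}\tfrac1n\log\cZ_t\big\|^2$; taking the outer expectation produces \eqref{eq:der_psi_1}. It then suffices to show that each of the three families of partial derivatives in \eqref{eq:der_psi_1} has squared expectation that is $O(1)$, uniformly in $\mu$ and in $t\in[0,1]$, so that the sum over $\mu$ is $O(n)$ and the prefactor $n^{-2}$ yields the claimed $O(1/n)$.

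The derivatives with respect to $\zeta^{*(L)}_\mu$ and $Z_\mu$ are immediate. One has $\partial_{\zeta^{*(L)}_\mu}\log\cZ_t=\big\langle P^y_\mu/P_\mu\big\rangle_t\, f'(S_{t\mu};\bA_\mu)\sqrt{t\epsilon_L}$, and an analogous formula for $\partial_{Z_\mu}\log\cZ_t$ with $\sqrt{\Delta}$ replacing $\sqrt{t\epsilon_L}$ and no $f'$ factor. By Lemma~\ref{lem:bound_P_out_der}, $|P^y_\mu/P_\mu|\le C(f)(|Z_\mu|^2+1)$, and $|f'|\le C(f)$ by hypothesis, hence both derivatives are bounded by $C(f)(|Z_\mu|^2+1)$ in absolute value; since $\mathbb E|Z_\mu|^k=O(1)$, their squared expectations are $O(1)$, as wanted.

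The only delicate term is the derivative with respect to an input coordinate $X^{(0)}_{i\mu}$, which is buried under the $L$ nonlinearities. Using the matrices $\bB^{*[\ell_1:\ell_2]}_\mu$ and the diagonal matrices $\boldsymbol{\Phi}^{*(\ell)}_\mu$ introduced in the proof outline of Theorem~\ref{th:Z_k_concentr}, the chain rule gives
\begin{align*}
\frac{\partial\log\cZ_t}{\partial X^{(0)}_{i\mu}}
&=\Big\langle\frac{P^y_\mu}{P_\mu}\Big\rangle_t f'(S_{t\mu};\bA_\mu)\Big(\sqrt{1-t}\,\frac{(\ba^{*\intercal}\bB^{*[1:L]}_\mu)_i}{\sqrt{d_L}}+\sqrt{t}\rho_L\frac{(\bv^{*\intercal}\bB^{*[1:L-1]}_\mu)_i}{\sqrt{d_{L-1}}}\Big)\\
&\quad+\sqrt{1-t}\,\Big\langle\frac{P^x_\mu}{P_\mu}\frac{(\ba^{\intercal}\bB^{[1:L]}_\mu)_i}{\sqrt{d_L}}\Big\rangle_t+\sqrt{t}\rho_L\Big\langle\frac{P^x_\mu}{P_\mu}\frac{(\bv^{\intercal}\bB^{[1:L-1]}_\mu)_i}{\sqrt{d_{L-1}}}\Big\rangle_t.
\end{align*}
I would square this, sum over $i\in[d_0]$, and use $\sum_i(\ba^{*\intercal}\bB^{*[1:L]}_\mu)^2_i=\|\bB^{*[1:L]\,\intercal}_\mu\ba^*\|^2\le\|\bB^{*[1:L]}_\mu\|^2\|\ba^*\|^2$ for the teacher part, and for the two posterior-averaged student parts first Cauchy--Schwarz inside the bracket and then the same operator-norm bound, arriving at terms like $\big\langle(P^x_\mu/P_\mu)^2\big\rangle_t\big\langle\|\bB^{[1:L]}_\mu\|^2\|\ba\|^2/d_L\big\rangle_t$. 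Lemma~\ref{lem:bound_P_out_der} controls the channel ratios by $C(f)(|Z_\mu|^2+1)$; the Nishimori identity (Appendix~\ref{app:nishiID}) turns $\mathbb E\big\langle\|\bB^{[1:L]}_\mu\|^2\|\ba\|^2/d_L\big\rangle_t$ into the teacher quantity $\mathbb E\big[\|\bB^{*[1:L]}_\mu\|^2\|\ba^*\|^2/d_L\big]$; and since $\varphi'$ is bounded the matrices $\boldsymbol{\Phi}^{*(\ell)}_\mu$ have operator norm at most $\|\varphi'\|_\infty$, so that $\|\bB^{*[1:L]}_\mu\|\le\|\varphi'\|_\infty^{\,L}\prod_{\ell=1}^{L}\|d_{\ell-1}^{-1/2}\bW^{*(\ell)}\|$, whose second moment is $O(1)$ in the regime of comparable widths because the $\bW^{*(\ell)}$ are independent and the rescaled Gaussian matrices have bounded operator-norm moments. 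Together with $\mathbb E\|\ba^*\|^2/d_L=1$ and $\mathbb E\|\bv^*\|^2/d_{L-1}=1$ this shows each $X^{(0)}_{i\mu}$-block contributes $O(1)$ to \eqref{eq:der_psi_1}, so the $(i,\mu)$ sum is $O(n)$.

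The main obstacle is precisely this last point: bounding the operator norm of the layered products $\bB^{*[1:L]}_\mu$ (and the student counterpart, accessed through the Nishimori identity) uniformly in $t$, and doing so after disentangling the Bayesian posterior average by Cauchy--Schwarz; the operator-norm control rests on standard non-asymptotic spectral bounds for Gaussian random matrices, valid because the layer widths are of comparable order, together with the bounded-derivative hypotheses on $\varphi$. Once the three families of derivatives are each shown to be $O(1)$ in squared expectation, \eqref{eq:der_psi_1} gives $n^{-2}\cdot O(n)=O(1/n)$, which is the assertion of the lemma.
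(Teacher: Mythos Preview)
Your proposal follows essentially the same route as the paper: Poincar\'e--Nash on the Gaussian block $\psi_1$, the trivial bounds for the $\zeta^{*(L)}_\mu$ and $Z_\mu$ derivatives via Lemma~\ref{lem:bound_P_out_der}, the layered chain rule expressing $\partial_{X^{(0)}_{i\mu}}\log\cZ_t$ through the matrices $\bB^{[1:L]}_\mu$, and the control of these via operator-norm bounds combined with the Nishimori identity. This is exactly the paper's argument.

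One small technical point: for the two posterior-averaged (``student'') pieces you use Cauchy--Schwarz inside the bracket to separate $\langle(P^x_\mu/P_\mu)^2\rangle_t$ from $\langle\|\bB^{[1:L]}_\mu\|^2\|\ba\|^2/d_L\rangle_t$, then bound the first by $C(f)(|Z_\mu|^2+1)^2$ and invoke Nishimori on the second alone. Strictly speaking the resulting product $(|Z_\mu|^2+1)^2\,\langle\text{norm}\rangle_t$ is correlated (through $Y_{t\mu}$), so Nishimori does not immediately factor the expectation; you need one more Cauchy--Schwarz on the outer $\EE$ (and then Jensen plus Nishimori on $\EE\langle h\rangle^2$), which costs only higher moments of the $\bB^*$ norms and is harmless. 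The paper avoids this wrinkle by using Jensen to push the square \emph{inside} the bracket, keeping $(P^x_\mu/P_\mu)^2$ and the norm together, applying Nishimori to the full product, and only then bounding the channel ratio at the teacher point where $Z_\mu$ is genuinely independent of $\ba^*$ and $\bB^{*[1:L]}_\mu$. Either route works; the paper's is slightly cleaner.
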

\begin{proof}
    Since  the elements of $\bX_{\mu}^{(0)}$, $\zeta^{*(L)}_{\mu}$, and $Z_{\mu}$ for $\mu=1,\ldots,n$ are jointly independent standard Gaussian, we can use Proposition~\ref{prop:poincare} and write
    \begin{align*}
        &\mathbb{E}\mathbb{V}_{\psi_1}\Big(\frac{1}{n}\log \cZ_t\Big)\leq \frac{1}{n^2} \EE\|\nabla_{\psi_1}\log\cZ_t\|^2\\
        &=\!\frac{1}{n^2}\sum_{\mu=1}^n\EE\Big(\frac{\partial\log\cZ_t}{\partial\zeta^{*(L)}_\mu}\Big)^2+\frac{1}{n^2}\sum_{\mu=1}^n\sum_{i=1}^{d_{0}}\EE\Big(\frac{\partial\log\cZ_t}{\partial X^{(0)}_{i\mu}}\Big)^2+\frac{1}{n^2}\sum_{\mu=1}^n\EE\Big(\frac{\partial\log\cZ_t}{\partial Z_\mu}\Big)^2=:I_1+I_2+I_3.
    \end{align*}
For brevity in what follows we write $P_\mu=P_{out}(Y_{t\mu}|s_{t\mu})$. Then,  expressing the derivatives from the first term, we get
\begin{align*}
    \Big|\frac{\partial\log\cZ_t}{\partial\zeta^{*(L)}_\mu}\Big|\leq \Big|\Big\langle \frac{P_{\mu}^y}{P_\mu}\Big\rangle f^\prime(S_{t\mu};\bA_\mu)\sqrt{t\epsilon_L}\Big|\leq \sqrt{t\epsilon_L}C(f)(|Z_\mu|^2+1)\,.
\end{align*}
    The last inequality is true due to Lemma~\ref{lem:bound_P_out_der} and the fact that function $f^\prime$ is bounded. Since $Z_\mu$ is Gaussian, all the moments of its absolute value are bounded with a constant, which gives us
    \begin{align*}
        \EE\Big(\frac{\partial\log\cZ_t}{\partial\zeta^{*(L)}_\mu}\Big)^2\leq C(f)\Rightarrow I_1\leq \frac{C(f)}{n}\,.
    \end{align*}
    The partial derivative with respect to $Z_\mu$ gives a simple expression that also can be handled using Lemma~\ref{lem:bound_P_out_der}
    \begin{align}
        \Big|\frac{\partial\log\cZ_t}{\partial Z_\mu}\Big|=\sqrt{\Delta}\Big|\Big\langle\frac{P_\mu^y}{P_\mu}\Big\rangle\Big|\leq C(f)(|Z_\mu|^2+1).
    \end{align}
    This gives us immediately $I_3\leq C(f)n^{-1}$.
    
   Before passing to the more challenging part, let us set some notation.
   First, set $\alpha^{*(\ell-1)}_{i\mu}:=\bW^{*(\ell)}_i\cdot\bX^{(\ell-1)}_\mu/\sqrt{d_{\ell-1}}$.
   In what follows we will need to calculate derivatives of $\bX_\nu^{(\ell)}$ and $\bx_\nu^{(\ell)}$. The chain rule gives us
    \begin{align*}
        \frac{\partial X_{j\nu}^{(\ell)}}{\partial X_{i\mu}^{(0)}}
        &=\frac{\partial\varphi\Big(\frac{\bW_j^{*(\ell)}}{\sqrt{d_{\ell-1}}}\varphi\Big(\frac{\bW^{*(\ell-1)}}{\sqrt{d_{\ell-2}}}\varphi\Big(\ldots\varphi\Big(\frac{\bW^{*(1)}\bX_{\mu}^{(0)}}{\sqrt{d_0}}\Big)\Big)}{\partial X_{i\mu}^{(0)}}\\
        &=\delta_{\mu,\nu}\Big(\prod_{p=1}^{\ell-1}\sum_{j_p=1}^{d_p}\Big)\varphi^\prime(\alpha^{*(\ell-1)}_{j\nu})\frac{W^{*(\ell)}_{jj_{\ell-1}}}{\sqrt{d_{\ell-1}}}\varphi^\prime(\alpha^{*(\ell-2)}_{j_{\ell-1}\nu})\frac{W^{*(\ell-1)}_{j_{\ell-1}j_{\ell-2}}}{\sqrt{d_{\ell-2}}}\ldots\varphi^\prime(\alpha^{*(0)}_{j_1\nu})\frac{W^{*(1)}_{j_1i}}{\sqrt{d_{0}}}\,.
    \end{align*}
    For $1\leq \ell_1\leq \ell_2\leq L$ we denote $d_{\ell_2}\times d_{\ell_1-1}$ matrix
    \begin{align}
        \bB^{*[\ell_1:\ell_2]}_{\nu}=\frac{\boldsymbol{\Phi}^{*(\ell_2-1)}_\nu\bW^{*(\ell_2)}\boldsymbol{\Phi}^{*(\ell_2-2)}_\nu\ldots\boldsymbol{\Phi}^{*(\ell_1-1)}_\nu\bW^{*(\ell_1)}}{\sqrt{d_{\ell_2-1}\ldots d_{\ell_1-1}}}
    \end{align}
    and $d_{\ell+1}\times d_{\ell+1}$ diagonal matrix $\boldsymbol{\Phi}^{*(\ell)}_\nu=\text{diag}(\varphi^\prime(\alpha^{*(\ell)}_{1\nu}),\ldots,\varphi^\prime(\alpha^{*(\ell)}_{d_{\ell-1}\nu}))$. Then previous expression can be rewritten as
    \begin{align}\label{eq:part_der_X_X}
        \frac{\partial X_{j\nu}^{(\ell)}}{\partial X_{i\mu}^{(0)}} =\delta_{\mu,\nu}B^{*[1:\ell]}_{ji,\nu}\,.
    \end{align}
    % In what follows we will repeatedly meet terms of the form  $\EE\|\ba^{*\intercal}\bB^{*[l_1:l_2]}_{\nu}\|^2$, to treat it we prove the following Lemma.
    % \begin{lemma}
    % Let $\bz$ be an random vector, independent of $\bB^{*[1:L]}$, with i. i. d. Gaussian standard elements, then
    %        \begin{align}\label{eq:bA_bound}
    %     \EE\|\d_{l_2}^{-1/2}\bz^\intercal\bB^{*[l:m]}_{\nu}\|^2= C(\varphi)O(1)
    % \end{align}
    % \end{lemma}
    % \begin{proof}
    %     By computing explicitly the norm of r.h.s. we can leave only terms with $z_i^2$, since all other will be zero after taking the expectation with respect to $\bz$. This gives
    %     \begin{align}
    %         \EE\|d_{m}^{-1/2}\bz^\intercal\bB^{*[l:m]}_{\nu}\|^2=\prod_{j=l-1}^{m} d_{j}^{-1}\EE \sum_{i=1}^{d_{m}}(\boldsymbol{\Phi}^{*(m-1)}_\nu\bW^{*(m)}\boldsymbol{\Phi}^{*(m-2)}_\nu\ldots\boldsymbol{\Phi}^{*(l-1)}_\nu\bW^{*(l)})_{ii}^2\leq \prod_{j=l-1}^{m} d_{j}^{-1}EE \sum_{i=1}^{d_{m}}\sum_{i_1=1}^{d_{m-1}}\ldots\sum_{i_{m-l}=1}^{d_l}
    %     \end{align}
       For the following calculations, it  will be useful to find the order of the operator norm of $\bB^{*[\ell_1:\ell_2]}_{\nu}$. Since the first derivative of $\varphi$ is bounded by some constant $C(\varphi)$, it is straightforward that the operator norm $\|\boldsymbol{\Phi}^{*(\ell)}_\nu\|$ is also bounded with $C(\varphi)$. Lastly, the expected operator norm of high dimensional matrices, say of size $a\times b$, with i.i.d.\ standard Gaussian elements is of order $O(\sqrt{a})=O(\sqrt{b})$ with high probability. This entails
    \begin{align}\label{eq:bA_bound}
        \EE\|\bB^{*[\ell_1:\ell_2]}_{\nu}\|\leq \bar C(\varphi)\sqrt{\prod_{p=\ell_1}^{\ell_2}\frac{\EE\|\bW^{*(p)}\|^2}{d_{p-1}}}=O(1)\,.
    \end{align} Similar bounds hold for any finite moment of the operator norm.
    Following the same lines we can write the expression for the derivative of $ x_{j\nu}^{(\ell)}$ and define  the students version of $\boldsymbol{\Phi}^{*(\ell)}_\nu$ and $\bB^{*[\ell_1:\ell_2]}_{\nu}$ by removing stars from all involved parameters. 
        \begin{align}\label{eq:part_der_x_X}
        \frac{\partial x_{j\nu}^{(\ell)}}{\partial X_{i\mu}^{(0)}} =\delta_{\mu,\nu}B^{[1:\ell]}_{\nu,ji}\,.
    \end{align}
    % {\color{red}It is straightforward that (\ref{eq:bA_bound}) also holds for $\bB^{[l_1:l_2]}_{\nu}$. F: Not sure I get it. Maybe you need to use Nishimori, no? I mean, in the end you need it only for $\bB^*$ if uniderstand correctly.}
    Returning to $I_2$, we have
    \begin{align*}
        &\frac{\partial\log \cZ_t}{\partial X_{i\mu}^{(0)}}=\sum_{j=1}^{d_{L-1}}\frac{\partial X_{j\mu}^{(L-1)}}{\partial X_{i\mu}^{(0)}}\Big\langle\frac{P_{\mu}^y}{P_{\mu}}\Big\rangle f^\prime(S_{t\mu};\bA_\mu)\Big(\sqrt{1-t}\frac{(\ba^{*\intercal}\boldsymbol{\Phi}^{*(L-1)}_\mu\bW^{*(L)})_j}{\sqrt{d_Ld_{L-1}}}+\sqrt{t}\rho_{L}\frac{v_j^{*}}{\sqrt{d_{L-1}}}\Big)\\
        &\qquad\qquad+\sum_{j=1}^{d_{L-1}}\Big( \sqrt{1-t}\Big\langle\frac{\partial x_{j\mu}^{(L-1)}}{\partial X_{i\mu}^{(0)}}\frac{P_{\mu}^x}{P_{\mu}}\frac{(\ba^{\intercal}\boldsymbol{\Phi}^{(L-1)}_\mu\bW^{(L)})_j}{\sqrt{d_Ld_{L-1}}}\Big\rangle
        +\sqrt{t}\rho_L\Big\langle\frac{\partial x_{j\mu}^{(L-1)}}{\partial X_{i\mu}^{(0)}}\frac{P_{\mu}^x}{P_{\mu}}\frac{v_j}{\sqrt{d_{L-1}}}\Big\rangle\Big).
        \end{align*}
Plugging  (\ref{eq:part_der_X_X}) and (\ref{eq:part_der_x_X}) in the latter expression  and taking the sum over $j$ we obtain
    \begin{align*}
        \frac{\partial\log \cZ_t}{\partial X_{i\mu}^{(0)}}
        =\Big\langle\frac{P_{\mu}^y}{P_{\mu}}\Big\rangle f^\prime(S_{t\mu};\bA_\mu)\Big(\sqrt{1-t}\frac{(\ba^{*\intercal}\bB_{\mu}^{*[1:L]})_i}{\sqrt{d_L}}+\sqrt{t}\rho_L\frac{(\bv^{*\intercal}\bB_{\mu}^{*[1:L-1]})_i}{\sqrt{d_{L-1}}}\Big)\\
        + \sqrt{1-t}\Big\langle\frac{P_{\mu}^x}{P_{\mu}}\frac{(\ba^\intercal\bB_{\mu}^{[1:L]})_i}{\sqrt{d_L}}\Big\rangle
        +\sqrt{t}\rho_L\Big\langle\frac{P_{\mu}^x}{P_{\mu}}\frac{(\bv^\intercal\bB_{\mu}^{[1:L-1]})_i}{\sqrt{d_{L-1}}}\Big\rangle\,.
        \end{align*}
        Using twice the simple inequality $(a+b)^2\leq 2(a^2+b^2)$ as well as Lemma~\ref{lem:bound_P_out_der} and boundedness of $f^\prime$ we get thee expression for $I_2$
      \begin{align*}
        I_2
        \leq \frac{1}{n^2}\sum_{\mu=1}^n\Big[C(f)\EE\Big((1+|Z_\mu|^2)^2\big(\|\frac{\ba^{*\intercal}}{d_k}\bB_{\mu}^{*[1:L]}\|^2+\|\frac{\bv^{*\intercal}}{d_{L-1}}\bB_{\mu}^{*[1:L-1]}\|^2\big)\Big)\\
        + (1-t)\sum_{i=1}^{d_0}\EE\Big\langle\frac{P_{\mu}^x}{P_{\mu}}\frac{(\ba^\intercal\bB_{\mu}^{[1:L]})_i}{\sqrt{d_L}}\Big\rangle^2
        +t\rho_L^2\sum_{i=1}^{d_0}\EE\Big\langle\frac{P_{\mu}^x}{P_{\mu}}\frac{(\bv^\intercal\bB_{\mu}^{[1:L-1]})_i}{\sqrt{d_{L-1}}}\Big\rangle^2\Big]\,.
        \end{align*}   
For the last two terms, we can bring square inside the Gibbs brackets with the help of Jensen inequality and then we notice that obtained expressions inside the Gibbs brackets depend only on $Y_{t\mu}$ and $s_{t\mu}$, which allows us to use Nishimori identity by removing brackets and adding $*$ to all the parameters. After this we can bound each ratio which includes $P_{out}$ by using again Lemma~\ref{lem:bound_P_out_der}. All this will result in
  \begin{align*}
     I_2
        \leq \frac{C(f)}{n^2}\sum_{\mu=1}^n\Big[\frac{\EE(1+|Z_\mu|^2)^2\|\ba^{*\intercal}\bB_{\mu}^{*[1:L]}\|^2}{d_L}+\frac{\EE(1+|Z_\mu|^2)^2\|\bv^{*\intercal}\bB_{\mu}^{*[1:L-1]}\|^2}{d_{L-1}}\Big]\,.
        \end{align*}   
Since $Z_\mu$, $\ba^*$, and $\bv^{*}$ are Gaussian and independent of $\bB_{\mu}^{*[1:L]}$ we have
  \begin{align*}
     I_2
        &\leq \frac{C(f)}{n^2}\sum_{\mu=1}^n\Big[\frac{\EE(1+|Z_\mu|^2)^2\EE\|\ba^{*}\|^2\EE\|\bB_{\mu}^{*[1:L]}\|^2}{d_L}\\
        &\qquad\qquad\qquad +\frac{\EE(1+|Z_\mu|^2)^2\EE\|\bv^{*}\|^2\EE\|\bB_{\mu}^{*[1:L-1]}\|^2}{d_{L-1}}\Big]\leq\frac{ C(f,\varphi) }{n}\,.
        \end{align*}   
The last inequality being true due to (\ref{eq:bA_bound}) and the fact that $\EE\|\bv^{*(\ell)}\|^2=O(d_\ell)$.
\end{proof}
For the next step we denote $h(\bA)=\E_{\psi_1}\log \cZ_t/n$, as a function of all the elements $A_{\mu,i}$ of $\bA_\mu$ for $1\le i\le k$ and $1\leq\mu\leq n$.
\begin{lemma}
     There exists a non-negative constant $C(f,\varphi$) such that
  \begin{align*}
      \mathbb{E}\Big(\frac{1}{n}h-\EE_{\bA}\frac{1}{n}h\Big)^2\leq \frac{C(f,\varphi)}{n}\,.
  \end{align*}
\end{lemma}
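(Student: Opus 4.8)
The plan is to bound the second summand $\mathbb{E}\,\mathbb{V}_{\psi_2}(\EE_{\psi_1}f_{n,t})$ of the decomposition \eqref{eq:var_free_ener} by the Efron--Stein inequality in its $L^2$ form (the one underlying Proposition~\ref{prop:efron-stein}, see \cite{Boucheron2004}), applied to the $pn$ i.i.d.\ scalar entries of $\bA=(\bA_\mu)_{\mu\le n}$ conditionally on $\psi_3$. Write $h:=\EE_{\psi_1}\frac1n\log\cZ_t$ and let $\bA^{(\mu,i)}$ be $\bA$ with the $i$-th coordinate of $\bA_\mu$ replaced by an independent copy; it is enough to show $\EE\big(h(\bA)-h(\bA^{(\mu,i)})\big)^2=O(n^{-2})$ uniformly in $\mu,i,t$. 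The structural fact to exploit is that $\bA_\mu$ enters $\cZ_t$ only through $Y_{t\mu}=f(S_{t\mu};\bA_\mu)+\sqrt\Delta Z_\mu$: factoring out the cavity partition function $\cZ^{(\mu)}_t$ (which does not involve $\bA_\mu$ or $Y_{t\mu}$) and denoting by $\langle\,\cdot\,\rangle^{(\mu)}_t$ the associated cavity Gibbs average (over the student parameters and over $\zeta^{(L)}_\mu$),
\begin{align*}
    n\big(h(\bA)-h(\bA^{(\mu,i)})\big)=\EE_{\psi_1}\big[u^{(\mu)}(Y_{t\mu})-u^{(\mu)}(\widetilde Y_{t\mu})\big],\qquad u^{(\mu)}(y):=\log\big\langle P_{  out}(y\mid s_{t\mu})\big\rangle^{(\mu)}_t,
\end{align*}
with $\widetilde Y_{t\mu}=f(S_{t\mu};\bA^{(i)}_\mu)+\sqrt\Delta Z_\mu$. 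By Jensen and $(a+b)^2\le2a^2+2b^2$, and since $(\widetilde Y_{t\mu},u^{(\mu)})$ has the same joint law as $(Y_{t\mu},u^{(\mu)})$ (we merely swapped one i.i.d.\ coordinate, independent of all of $\cZ^{(\mu)}_t$), one gets $\EE\big(h(\bA)-h(\bA^{(\mu,i)})\big)^2\le 4n^{-2}\,\EE\,u^{(\mu)}(Y_{t\mu})^2$, so the whole statement reduces to the uniform bound $\EE\,u^{(\mu)}(Y_{t\mu})^2=O(1)$.

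To obtain this I would control $u^{(\mu)}$ pointwise. An upper bound is immediate, $u^{(\mu)}(y)\le-\tfrac12\log(2\pi\Delta)$, because $P_{  out}(\cdot\mid x)\le(2\pi\Delta)^{-1/2}$. For the lower bound I would use the Gaussian-channel representation \eqref{eq:output_kernel_vs_Gauss_channel}, Jensen inside the $P_A$- and the cavity Gibbs averages, and the fact that $f(\cdot;\bA)$ is odd with bounded first derivative, hence $|f(s;\bA)|\le L_f|s|$ with $L_f:=\|\partial_x f\|_\infty$:
\begin{align*}
    u^{(\mu)}(y)\ge-\tfrac12\log(2\pi\Delta)-\tfrac1{2\Delta}\big\langle\EE_{\bA}\big(y-f(s_{t\mu};\bA)\big)^2\big\rangle^{(\mu)}_t\ge-\tfrac12\log(2\pi\Delta)-\tfrac1\Delta\Big(y^2+L_f^2\,\big\langle s_{t\mu}^2\big\rangle^{(\mu)}_t\Big).
\end{align*}
Consequently $u^{(\mu)}(y)^2\le C_\Delta\Big(1+y^4+\big(\langle s_{t\mu}^2\rangle^{(\mu)}_t\big)^2\Big)$, and it remains to check $\EE\,Y_{t\mu}^4=O(1)$ and $\EE\big(\langle s_{t\mu}^2\rangle^{(\mu)}_t\big)^2=O(1)$.

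Both are consequences of the machinery already in the paper. For the first, $|Y_{t\mu}|\le L_f|S_{t\mu}|+\sqrt\Delta|Z_\mu|$, and $\EE\,S_{t\mu}^4=O(1)$: conditioning on the preactivations, $\ba^{*\intercal}\varphi(\balpha_\mu)/\sqrt{d_L}$ is Gaussian with variance $\|\varphi(\balpha_\mu)\|^2/d_L$ and $\bv^{*\intercal}\bX^{(L-1)}_\mu/\sqrt{d_{L-1}}$ is Gaussian with variance $\|\bX^{(L-1)}_\mu\|^2/d_{L-1}$, whose squared second moments are $O(1)$ by Lemma~\ref{lem:moments_post_activations}. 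For the second, $\EE\big(\langle s_{t\mu}^2\rangle^{(\mu)}_t\big)^2\le\EE\langle s_{t\mu}^4\rangle^{(\mu)}_t$ by Jensen, and since $\langle\,\cdot\,\rangle^{(\mu)}_t$ is the Bayes posterior of the sub-dataset $\{(Y_{t\nu},\bX^{(0)}_\nu)\}_{\nu\ne\mu}$ while $s_{t\mu}$ depends on the fresh input $\bX^{(0)}_\mu$ (and on $\zeta^{(L)}_\mu$, which is unconstrained in the cavity), the Nishimori identities of Appendix~\ref{app:nishiID} allow, at fixed $\bX^{(0)}_\mu$, to replace the student weights inside $s_{t\mu}$ by the teacher ones, turning $\EE\langle s_{t\mu}^4\rangle^{(\mu)}_t$ into the fourth moment of a quantity of exactly the form of $S_{t\mu}$, again $O(1)$ by Lemma~\ref{lem:moments_post_activations}. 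Summing over the $pn$ coordinates gives $\mathbb{E}\,\mathbb{V}_{\psi_2}(h)\le\tfrac12\cdot pn\cdot 4n^{-2}\,O(1)=O(1/n)$.

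The step I expect to be the most delicate (though much milder than the $\psi_3$ bound) is the lower bound on $u^{(\mu)}$: since $P_{  out}$ is not bounded away from $0$, $u^{(\mu)}(Y_{t\mu})$ is an unbounded random variable, so the bounded-difference version of Efron--Stein cannot be invoked verbatim; one must keep the $L^2$ formulation and absorb the growth of $u^{(\mu)}$ into the fourth moment of $Y_{t\mu}$ and the squared cavity second moment $\langle s_{t\mu}^2\rangle^{(\mu)}_t$, the latter being precisely what forces the combined use of the Nishimori identities and of the post-activation moment bounds.
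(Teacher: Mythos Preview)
Your argument is correct and takes a genuinely different route from the paper's. The paper does not use the cavity/Nishimori machinery here at all: it sandwiches $h(\bA')-h(\bA)$ by two applications of Jensen's inequality, first at the level of the Gibbs average $\langle e^{H-H'}\rangle_H$, then at the level of the $P_A$-average hidden in $P_{out}$. This yields
\[
\frac1n\,\EE_{\psi_1}\langle H-H'\rangle_H \;\le\; h(\bA')-h(\bA)\;\le\;\frac1n\,\EE_{\psi_1}\langle H-H'\rangle_{H'},
\]
with $H-H'$ controlled pointwise by $(f(S_{t\nu};\bA_\nu)-f(s_{t\nu};\tilde\bA)+\sqrt\Delta Z_\nu)^2-(f(S_{t\nu};\bA'_\nu)-f(s_{t\nu};\tilde\bA)+\sqrt\Delta Z_\nu)^2$. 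Invoking that $f$ is bounded makes this $\le C(f)(1+|Z_\nu|^2)$, so after $\EE_{\psi_1}$ one gets a \emph{deterministic} bound $|h(\bA')-h(\bA)|\le C/n$ and the bounded-difference Proposition~\ref{prop:efron-stein} closes the argument directly.

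By contrast, you work with the $L^2$ form of Efron--Stein, isolate the effect of $\bA_\mu$ through the cavity quantity $u^{(\mu)}(y)=\log\langle P_{out}(y\mid s_{t\mu})\rangle^{(\mu)}_t$, and control $\EE\,u^{(\mu)}(Y_{t\mu})^2$ via a pointwise envelope plus fourth-moment bounds on $Y_{t\mu}$ and $\langle s_{t\mu}^2\rangle^{(\mu)}_t$, the latter transferred to teacher variables by Nishimori. This is heavier---it pulls in the cavity construction, the Nishimori identity, and Lemma~\ref{lem:moments_post_activations}---but it has the advantage of needing only the Lipschitz property of $f(\cdot;\bA)$ (the stated hypothesis of Theorem~\ref{thm:reduction}) rather than boundedness of $f$ itself, which the paper invokes at this step. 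So your route trades brevity for robustness to the growth of $f$.
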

\begin{proof}
     We denote by $\bA^\prime$ a vector such that $A^\prime_{\mu,i}=\bA_{\mu,i}$ for $\mu\neq\nu$, $i\neq j$ and $A^\prime_{\nu,j}$ is a random variable with distribution $P_A$, independent of all others.
    According to Proposition~\ref{prop:efron-stein} it is sufficient to prove that 
    \begin{align}\label{ineq:bound_dif_h}
        |h(\bA^\prime)-h(\bA)|<\frac{C(f,\varphi)}n\,.
    \end{align}
    If we denote by $H$ (and $H^\prime$) the Hamiltonian corresponding to $\cZ_t$ (and $\cZ_t$ with $\bA^\prime$), i.e.
    \begin{align*}
        &H=-\sum_{\mu=1}^n\log P_{out}(f(S_{t\mu};\bA_\mu)+\sqrt{\Delta}Z_\mu|s_{t\mu}),\\
        &H^\prime=-\sum_{\mu\neq\nu}\log P_{out}(f(S_{t\mu};\bA_\mu)+\sqrt{\Delta}Z_\mu|s_{t\mu})-\log P_{out}(f(S_{t\nu};\bA^\prime_\nu)+\sqrt{\Delta}Z_\nu|s_{t\nu}).
    \end{align*}
    one can see that
    \begin{align*}
        h(\bA^\prime)-h(\bA)
        =\frac{1}{n}\E_{\psi_1}\log \Big\langle e^{H-H^\prime}\Big\rangle_{H}
        \geq \frac{1}{n}\E_{\psi_1} \Big\langle H-H^\prime\Big\rangle_{H}\,,
    \end{align*}
    the last step being true due to Jensen inequality. On the other hand $h(\bA^\prime)-h(\bA)
                \leq n^{-1}\E_{\psi_1} \langle H-H^\prime\rangle_{H^\prime}$.
  We recall the definition  of $P_{  out}$ 
  \begin{align*}
    P_{  out}(y\mid x)=\int_{\mathbb{R}^p}\frac{dP_A(\bA)}{\sqrt{2\pi\Delta}}\exp\Big[-\frac{1}{2\Delta}(y-f(x;\bA))^2\Big]\,.
\end{align*}
Similarly, we obtain
  \begin{align*}
     H-H^\prime\geq\frac{1}{2\Delta}\Big\langle (f(S_{t\nu};\bA_\nu)-f(s_{t\nu};\tilde{\bA})+\sqrt{\Delta}Z_\nu)^2-(f(S_{t\nu};\bA_\nu^\prime)-f(s_{t\nu};\tilde{\bA})+\sqrt{\Delta}Z_\nu)^2\Big\rangle_{G}
\end{align*}
and
  \begin{align*}
     H-H^\prime\leq\frac{1}{2\Delta}\Big\langle (f(S_{t\nu};\bA_\nu)-f(s_{t\nu};\tilde{\bA})+\sqrt{\Delta}Z_\nu)^2-(f(S_{t\nu};\bA_\nu^\prime)-f(s_{t\nu};\tilde{\bA})+\sqrt{\Delta}Z_\nu)^2\Big\rangle_{G^\prime}\,,
\end{align*}
where $\langle\cdot\rangle_G$ (or with $G^\prime$) defined as
\begin{align}
   \langle\cdot\rangle_G=\dfrac{\int P_A(d\tilde{\bA})e^{-\frac{1}{2\Delta}(Y_{t\nu}-f(s_{t\nu};\tilde{\bA}))^2}(\cdot)}{\int P_A(d\tilde{\bA})e^{-\frac{1}{2\Delta}(Y_{t\nu}-f(s_{t\nu};\tilde{\bA}))^2}} 
\end{align}
or with $Y^\prime_{t\nu}$ where $\bA_\nu$ is changed to $\bA^\prime_\nu$. Since $f$ is bounded, we immediately obtain $|H-H^\prime|\leq C(f)(|Z_\mu|^2+1)$ and (\ref{ineq:bound_dif_h}). Now, due to the Proposition~\ref{prop:efron-stein}, the statement of the Lemma is proved.

\end{proof}

Finally, we prove the concentration with respect to $(\bW^{*(\ell)})_{\ell=1}^L$, $\ba^*$, and $\bv^*$.  For this we denote  $g\equiv g((\bW^{*(\ell)})_{\ell=1}^L,\ba^*,\bv^*):=\EE_{\psi_1,\psi_2}\log\cZ_t/n$.
\begin{lemma}
     There exists a non-negative constant $C(f,\varphi$) such that
  \begin{align*}
      \mathbb{E}\Big(g-\EE_{\psi_3}g\Big)^2\leq \frac{C(f,\varphi)}{d_m}\,.
  \end{align*}
\end{lemma}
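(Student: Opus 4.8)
The plan is to follow the same pattern as for the first two batches of parameters. Since $(\bW^{*(\ell)})_{\ell=1}^L,\ba^*,\bv^*$ are jointly independent standard Gaussians, Proposition~\ref{prop:poincare} gives
\begin{align*}
\mathbb{E}\big(g-\EE_{\psi_3}g\big)^2\leq\sum_{\ell=1}^L\sum_{i,j}\EE\Big(\frac{\partial g}{\partial W^{*(\ell)}_{ij}}\Big)^2+\sum_i\EE\Big(\frac{\partial g}{\partial a^*_i}\Big)^2+\sum_j\EE\Big(\frac{\partial g}{\partial v^*_j}\Big)^2\,,
\end{align*}
so it is enough to show $\sum_{i,j}\EE(\partial g/\partial W^{*(\ell)}_{ij})^2=O(1/d_{\ell-1})$ for each $\ell$, together with the analogous bounds for $\ba^*,\bv^*$: since $L$ is fixed and $d_{\ell-1}\geq d_m$, summing these yields $O(1/d_m)$. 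The explicit form of $\partial g/\partial W^{*(\ell)}_{ij}$ is \eqref{eq:der_W}; a brute-force estimate using Lemma~\ref{lem:bound_P_out_der}, the Nishimori identity and the operator-norm bound \eqref{eq:bA_bound} only gives $O(1)$, because the summations over $i$ and over $j$ produce $\|\ba^{*\intercal}\bB^{*[\ell+1:L]}_\mu\|^2=O(d_L)$ and $\|\bX_\mu^{(\ell-1)}\|^2=O(d_{\ell-1})$, exactly cancelling the $1/(d_Ld_{\ell-1})$ prefactor. The missing factor $1/d_{\ell-1}$ must be gained through one further integration by parts, which requires freeing a Gaussian variable lying outside the nonlinearities.

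To that end I would introduce the circular interpolation $\bX_\mu^{(0)}(\tau):=\bX_\mu^{(0)}\sin\tau+\tilde\bX_\mu^{(0)}\cos\tau$, $\tau\in[0,\pi/2]$, with $\tilde\bX_\mu^{(0)}$ an independent copy, and the propagated post-activations $\bX_\mu^{(\ell)}(\tau)$, $\tilde\bX_\mu^{(\ell)}$; then $\dot\bX_\mu^{(0)}(\tau)$ is Gaussian, independent of $\bX_\mu^{(0)}(\tau)$, with $\EE[X^{(0)}_{i\nu}\dot X^{(0)}_{p\mu}(\tau)]=\delta_{\mu\nu}\delta_{ip}\cos\tau$. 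I would rewrite the factor $X^{(\ell-1)}_{j\mu}$ in \eqref{eq:der_W} as $\varphi(\bW^{*(\ell-1)}_j\tilde\bX^{(\ell-2)}_\mu/\sqrt{d_{\ell-2}})+\int_0^{\pi/2}d\tau\,\frac{d}{d\tau}\varphi(\bW^{*(\ell-1)}_j\bX^{(\ell-2)}_\mu(\tau)/\sqrt{d_{\ell-2}})$: the boundary term at $\tau=0$, being (once the interpolation is propagated) independent of the remaining factors and an odd function of a centred Gaussian, has zero expectation and drops out, while the integral produces the factor $\sum_{p=1}^{d_0}B^{*[1:\ell-1]}_{jp}(\tau)\dot X^{(0)}_{p\mu}(\tau)$, opening the way to an integration by parts against $\dot X^{(0)}_{p\mu}(\tau)$. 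The subtlety is that $\langle P^y/P\rangle$, $S_{t\mu}$ and each diagonal matrix $\boldsymbol{\Phi}^{*(k)}_\mu$ ($k=\ell-1,\dots,L-1$) hidden inside $\bB^{*[\ell+1:L-1]}_\mu$ are also correlated with $\dot X^{(0)}_{p\mu}(\tau)$, so the integration by parts generates a finite list of terms. The generic ones are products of (i) Gibbs brackets of ratios of derivatives of $P_{  out}$, controlled by Lemma~\ref{lem:bound_P_out_der} and the Nishimori identity; (ii) chains of matrices $\bW^{*(k)}/\sqrt{d_{k-1}}$, $\boldsymbol{\Phi}^{*(k)}_\mu$, $\boldsymbol{\Phi}^{*(k)}_\mu(\tau)$, all with $O(1)$ operator norm; and (iii) rows of the form $\ba^{*\intercal}\bB^{*[k_1:k_2]}_\mu/\sqrt{d_L}$; these are all of the claimed order $O(1/d_{\ell-1})$. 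A few terms, however, come with the special structure
\begin{align*}
\EE\sum_{i,j}\Big|\frac{\partial g}{\partial W^{*(\ell)}_{ij}}\Big|^2\leq\frac{C(f)}{d_{\ell-1}}\EE\int_0^{\pi/2}\Tr\Big(\mathbf{E}^\intercal\,\text{diag}(\mathbf{e})\,\bG(\tau)\bG(\tau)^\intercal\,\text{diag}(\mathbf{e})\,\mathbf{E}\Big)\,d\tau+\cdots\,,
\end{align*}
where $\mathbf{E}$ and $\bG(\tau)$ are again products of the bounded-operator-norm matrices above, and $\mathbf{e}$ is a row of the form $\ba^{*\intercal}\bB^{*[k_1:k_2]}_\mu$, normalized so that $\EE\|\mathbf{e}\|^2=O(1)$.

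For these special terms I would use that $\|\mathbf{E}\|$ and $\|\bG(\tau)\|$ are $O(1)$ (with all moments bounded, via \eqref{eq:bA_bound} and the Lipschitzness of $\varphi$), together with the fact that $\text{diag}(\mathbf{e})\bG(\tau)\bG(\tau)^\intercal\text{diag}(\mathbf{e})$ is symmetric positive semidefinite, so that the inequality $|\Tr(\mathbf{A}\mathbf{B})|\leq\|\mathbf{B}\|\Tr\mathbf{A}$ (valid for symmetric positive semidefinite $\mathbf{A}$) gives
\begin{align*}
\frac{C(f)}{d_{\ell-1}}\EE\int_0^{\pi/2}\Tr\Big(\mathbf{E}^\intercal\,\text{diag}(\mathbf{e})\,\bG(\tau)\bG(\tau)^\intercal\,\text{diag}(\mathbf{e})\,\mathbf{E}\Big)\,d\tau\leq\frac{C(f)}{d_{\ell-1}}\EE\int_0^{\pi/2}\|\mathbf{E}\|^2\|\bG(\tau)\|^2\,\Tr\,\text{diag}(\mathbf{e})^2\,d\tau\leq\frac{C(f,\varphi)}{d_{\ell-1}}\,,
\end{align*}
using $\Tr\,\text{diag}(\mathbf{e})^2=\|\mathbf{e}\|^2$ and $\EE\|\mathbf{e}\|^2=O(1)$. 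Summing over the finitely many layers $\ell$ gives the contribution $O(1/d_m)$ from the $\bW^{*(\ell)}$-derivatives; the $\ba^*$- and $\bv^*$-derivatives are structurally simpler and yield the same order by the same circular-interpolation-plus-integration-by-parts mechanism. Combined with Lemma~\ref{lemma:concentration_free}, the analogous bound for $\psi_2$, and the variance decomposition \eqref{eq:var_free_ener}, this proves Theorem~\ref{th:Z_k_concentr_apx}. The main obstacle is the combinatorial bookkeeping triggered by the integration by parts: one must track every occurrence of $\dot X^{(0)}_{p\mu}(\tau)$ — inside $P_{  out}$, inside $S_{t\mu}$, and inside each $\boldsymbol{\Phi}^{*(k)}_\mu$ appearing in the $\bB$'s — organise the resulting $O(1)$-many terms, and recognise which of them collapse to the benign trace form; \emph{keeping} the single gained factor $1/d_{\ell-1}$ through the $d_\ell d_{\ell-1}$-fold summation is precisely what the positivity argument secures.
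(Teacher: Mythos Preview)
Your proposal is correct and follows essentially the same route as the paper: Poincar\'e--Nash, the circular interpolation on $\bX_\mu^{(0)}$ to free a Gaussian $\dot X^{(0)}_{p\mu}(\tau)$ outside the nonlinearities, integration by parts producing a finite list of terms (the paper labels them $I_1,I_2,I_3$), and the trace inequality $|\Tr(\bA\bB)|\le\|\bB\|\Tr\bA$ to handle the ``diag($\mathbf e$)'' terms coming from the derivative hitting the $\boldsymbol{\Phi}^{*(k)}_\mu$ inside $\bB^{*[\ell+1:L]}_\mu$. The only detail you do not spell out is that the case $\ell=1$ is handled directly (since $X^{(0)}_{j_1\mu}$ is already Gaussian, no circular interpolation is needed there), but this is a simplification rather than a gap.
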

\begin{proof}
    We use again Poincaré-Nash inequality and write
    \begin{align}
        \mathbb{E}\Big(g-\EE_{(\bW^{*(\ell)})_{\ell=1}^L,\ba^*,\bv^{*}}g\Big)^2\leq\sum_{\ell=1}^L\sum_{i_{\ell}=1}^{d_{\ell}}\sum_{j_{\ell}=1}^{d_{\ell-1}}\EE\Big(\frac{\partial g}{\partial W_{i_{\ell} j_{\ell}}^{*(\ell)}}\Big)^2+\sum_{i=1}^{d_L}\EE\Big(\frac{\partial g}{\partial a_{i}^{*}}\Big)^2+\sum_{j=1}^{d_{L-1}}\EE\Big(\frac{\partial g}{\partial v_{j}^{*}}\Big)^2\,.
    \end{align}
    
    \textit{Concentration with respect to $\bW^{*(\ell)}$}. 
To find the derivative of $g$ with respect to $W_{i_{\ell} j_{\ell}}^{*(\ell)}$ it would be convenient to calculate first the general derivative of $X_{\mu,p}^{(k)}$ for $k\geq \ell$
\begin{align*}
    \frac{\partial X_{p\mu}^{(k)}}{\partial W_{i_{\ell}j_{\ell}}^{*(\ell)}}=\frac{(\bB^{*[\ell+1:k]}_{\mu}\boldsymbol{\Phi}^{*(\ell-1)}_\mu)_{pi_{\ell}}X^{(\ell-1)}_{j_{\ell}\mu}}{\sqrt{ d_{\ell-1}}},
\end{align*}
where in the case of $k=\ell$ we set $\bB^{*[\ell+1:\ell]}=\mathbf{I}_{d_{\ell}}$. Then the derivative of $g$ is
\begin{align*}
    \frac{\partial g}{\partial W_{i_{\ell}j_{l}}^{*(\ell)}}
    &=\frac{1}{n}\sum_{\mu=1}^n\EE_{\psi_1,\psi_2}\Big[\Big\langle\frac{P_{\mu}^y}{P_\mu}\Big\rangle f^\prime(S_{t\mu};\bA_\mu)\Big(\frac{\sqrt{1-t}(\ba^{*\intercal}\bB^{*[\ell+1:L]}_{\mu}\boldsymbol{\Phi}^{*(\ell-1)}_\mu)_{i_l}X^{(\ell-1)}_{j_{\ell}\mu}}{\sqrt{d_Ld_{\ell-1}}}\\
    &+\frac{\sqrt{t}\rho_L(\bv^{*\intercal}\bB^{*[\ell+1:L-1]}_{\mu}\boldsymbol{\Phi}^{*(\ell-1)}_\mu)_{i_{\ell}}X^{(\ell-1)}_{j_{\ell}\mu}}{\sqrt{d_{L-1}d_{\ell-1}}}\Big)\Big]\,.
\end{align*}
For $\ell=L$ the second term is obviously absent, to reflect this, we set $\bB^{*[L+1:L-1]}_{\mu}=0$. It is worth to notice, that $\psi_1$ and $\psi_2$ contain all elements with dependency of $\mu$, which means that each term in the sum over $\mu$ will be the same, so we can remove $\frac{1}{n}\sum_{\mu}^n$ and write
\begin{align*}
    \frac{\partial g}{\partial W_{i_{\ell}j_{\ell}}^{*(\ell)}}
    &=\EE_{\psi_1,\psi_2}\Big[\Big\langle\frac{P_{\mu}^y}{P_\mu}\Big\rangle f^\prime(S_{t\mu};\bA_\mu)\Big(\frac{\sqrt{1-t}(\ba^{*\intercal}\bB^{*[\ell+1:L]}_{\mu}\boldsymbol{\Phi}^{*(\ell-1)}_\mu)_{i_\ell}X^{(\ell-1)}_{j_\ell\mu}}{\sqrt{d_Ld_{\ell-1}}}\\
    &+\frac{\sqrt{t}\rho_L(\bv^{*\intercal}\bB^{*[\ell+1:L-1]}_{\mu}\boldsymbol{\Phi}^{*(\ell-1)}_\mu)_{i_\ell}X^{(\ell-1)}_{j_\ell\mu}}{\sqrt{d_{L-1}d_{\ell-1}}}\Big)\Big]
\end{align*}
for fixed arbitrary $\mu$. The two resulting terms are of the very similar form, to avoid repetitive computations we will focus only on the first one, that we denote by $E$. Since later we will use the usual inequality $\EE(a+b)^2\leq 2\EE a^2+2\EE b^2$, it is sufficient to bound $\EE E^2$. One can see that by  estimating in a straightforward way we can achieve only the order of $O(1)$, which is not what we are aiming for. To reach the right order, we need another integration by parts and to be able to do so, we are introducing a circular interpolation which replaces $\bX_\mu^{(0)}$ by it's independent copy $\tilde{\bX}_\mu^{(0)}$
\begin{align}\label{eq:circ_inter_X}
    \bX_\mu^{(0)}(\tau)=\bX^{(0)}_\mu\sin\tau+\tilde{\bX}^{(0)}_\mu\cos\tau,
\end{align}
where $\tau\in[0,\pi/2]$. The nice properties of the new Gaussian variable $\bX_\mu^{(0)}(\tau)$ is that its  derivative with respect to $\tau$, denoted $\dot{\bX}_\mu^{(0)}(\tau)$, has simple correlations 
\begin{align}\label{eq:cov_X_tau}
\begin{split}
    &\EE_{\bX_\mu^{(0)},\tilde{\bX}_\mu^{(0)}}(X_{i\nu}^{(0)}(\tau)\dot{X}_{j\mu}^{(0)}(\tau))=0\,,\quad \EE_{\bX^{(0)}_\mu,\tilde{\bX}^{(0)}_\mu}(X^{(0)}_{i\nu}\dot{X}^{(0)}_{j\mu}(\tau))=\delta_{\mu\nu}\delta_{ij}\cos\tau\,.    
\end{split}
\end{align}
Respectively, in a recursive way we define $\bX_\mu^{(\ell)}(\tau)=\varphi(\bW^{*(\ell)}\bX^{(\ell-1)}_\mu(\tau)/\sqrt{d_{\ell-1})}$ and $\tilde{\bX}^{(\ell)}_\mu=\varphi(\bW^{*(\ell)}\tilde{\bX}^{(\ell-1)}_\mu/\sqrt{d_{\ell-1})}$ for $1\leq \ell\leq L$, as well as  $\boldsymbol{\alpha}^{*(\ell)}_\mu(\tau)=\bW^{*(\ell+1)}\bX^{(\ell)}_\mu(\tau)/\sqrt{d_{\ell}}$ and $\tilde{\boldsymbol{\alpha}}^{*(\ell)}_\mu=\bW^{*(\ell+1)}\tilde{\bX}^{(\ell)}_\mu/\sqrt{d_{\ell}}$.

First, let us deal with $\ell>1$. In such a case $\bX^{(\ell-1)}_\mu=\varphi(\boldsymbol{\alpha}^{*(\ell-2)}_\mu)$, which can be rewritten us
\begin{align}
   \varphi(\boldsymbol{\alpha}^{*(\ell-2)}_\mu)= \int_0^{\pi/2}d\tau \frac{d}{d\tau}{\varphi}(\boldsymbol{\alpha}^{*(\ell-2)}_\mu(\tau))+\varphi(\tilde{\boldsymbol{\alpha}}^{*(\ell-2)}_\mu)\,.
\end{align}
Since $\varphi$ is an odd function and $\tilde{\bX}^{(0)}_\mu$ is independent of every other variable in $g$, the term containing $\varphi(\tilde{\boldsymbol{\alpha}}^{*(\ell-2)}_\mu)$ will collapse to $0$, and we obtain immediately
\begin{align*}
   E= \EE_{\psi_1,\psi_2,\tilde{\bX}_\mu^{(0)}}\int_0^{\pi/2}\Big\langle\frac{P_{\mu}^y}{P_\mu}\Big\rangle f^\prime(S_{t\mu};\bA_\mu)\frac{\sqrt{1-t}(\ba^{*\intercal}\bB^{*[\ell+1:L]}_{\mu}\boldsymbol{\Phi}^{*(\ell-1)}_\mu)_{i_\ell}}{\sqrt{d_Ld_{\ell-1}}}\frac{d}{d\tau}{\varphi}(\alpha^{*(\ell-2)}_{j_\ell\mu}(\tau))d\tau. 
\end{align*}
Let us have  closer look at the derivative of ${\varphi}(\alpha^{*(\ell-2)}_{j_\ell\mu}(\tau))$. Using the chain rule and (\ref{eq:part_der_X_X}) we get
\begin{align}
   \frac{d}{d\tau}{\varphi}(\alpha^{*(\ell-2)}_{j_\ell\mu}(\tau))=\sum_{p=1}^{d_0}\frac{\partial X^{(\ell-1)}_{j_\ell\mu}(\tau)}{\partial X_{p\mu}^{(0)}(\tau)} \dot{X}_{p\mu}^{(0)}(\tau)=\sum_{p=1}^{d_0}B^{*[1:\ell-1]}_{j_\ell p,\mu}(\tau)\dot{X}_{p\mu}^{(0)}(\tau),
\end{align}
where for any $\ell_1,\ell_2\in [1,\ldots,L]$ matrix $\bB^{*[\ell_1:\ell_2]}_\mu(\tau)$ defined the same way as $\bB^{*[\ell_1:\ell_2]}_\mu$, with $\bX_\mu^{(0)}$ replaced by $\bX_\mu^{(0)}(\tau)$. With the help of (\ref{eq:cov_X_tau}) we  integrate $E$ by parts with respect to Gaussian parameter $\dot{X}_{p\mu}^{(0)}(\tau)$.
\begin{align}
E&=\sum_{p=1}^{d_0}\EE_{\psi,\tilde{\bX}_\mu^{(0)}}\int_0^{\pi/2}\cos\tau\Big[\Big\langle\frac{P_{\mu}^y}{P_\mu}\Big\rangle f^\prime(S_{t\mu};\bA_\mu)\frac{\sqrt{1-t}(\ba^{*\intercal}\bB^{*[\ell+1:L]}_{\mu}\boldsymbol{\Phi}^{*(\ell-1)}_\mu)_{i_\ell}}{\sqrt{d_Ld_{\ell-1}}}\Big]^{\prime}_{X_{p\mu}^{(0)}}B^{*[1:\ell-1]}_{\mu,j_\ell p}(\tau),\nonumber\\
    &=I_1+I_2+I_3,
\end{align}
    where we denoted
    \begin{align*}
        I_1& :=\EE_{\psi_1,\psi_2,\tilde{\bX}^{(0)}_\mu}\int_0^{\pi/2}\cos\tau\sum_{p=1}^{d_0}\Big(-\Big\langle\frac{P_{\mu}^y}{P_{\mu}}\Big\rangle^2f^\prime(S_{t\mu};\bA_\mu)^2+\Big\langle\frac{P_{\mu}^{yy}}{P_{\mu}}\Big\rangle f^\prime(S_{t\mu};\bA_\mu)^2\\
        &+\Big\langle\frac{P_{\mu}^y}{P_{\mu}}\Big\rangle f^{\prime\prime}(S_{t\mu};\bA_\mu)\Big)\frac{\partial S_{t\mu}}{\partial X^{(0)}_{p\mu}}\frac{\sqrt{1-t}(\ba^{*\intercal}\bB^{*[\ell+1:L]}_{\mu}\boldsymbol{\Phi}^{*(\ell-1)}_\mu)_{i_\ell}}{\sqrt{d_Ld_{\ell-1}}}B^{*[1:\ell-1]}_{j_\ell p,\mu}(\tau),\\
        I_2&:=\EE_{\psi_1,\psi_2,\tilde{\bX}^{(0)}_\mu}\int_0^{\pi/2}\cos\tau\sum_{p=1}^{d_0}\Big(-\Big\langle\frac{P_{\mu}^y}{P_{\mu}}\Big\rangle \Big\langle\frac{P_{\mu}^{x}}{P_{\mu}}\frac{\partial s_{t\mu}}{\partial X^{(0)}_{p\mu}}\Big\rangle +\Big\langle\frac{P_{\mu}^{yx}}{P_{\mu}}\frac{\partial s_{t\mu}}{\partial X^{(0)}_{p\mu}}\Big\rangle \Big)\\
        &\times f^{\prime}(S_{t\mu};\bA_\mu)\frac{\sqrt{1-t}(\ba^{*\intercal}\bB^{*[\ell+1:L]}_{\mu}\boldsymbol{\Phi}^{*(\ell-1)}_\mu)_{i_\ell}}{\sqrt{d_Ld_{\ell-1}}}B^{*[1:\ell-1]}_{j_\ell p,\mu}(\tau),\\
        I_3&:=\sum_{p=1}^{d_0}\EE_{\psi_1,\psi_2,\tilde{\bX}_\mu}\int_0^{\pi/2}\cos\tau\Big\langle\frac{P_{\mu}^y}{P_\mu}\Big\rangle f^\prime(S_{t\mu};\bA_\mu)\nonumber\\
        &\qquad\qquad\qquad\times\Big[\frac{\sqrt{1-t}(\ba^{*\intercal}\bB^{*[\ell+1:L]}_{\mu}\boldsymbol{\Phi}^{*(\ell-1)}_\mu)_{i_\ell}}{\sqrt{d_Ld_{\ell-1}}}\Big]^{\prime}_{X_{p\mu}^{(0)}}B^{*[1:\ell-1]}_{j_\ell p,\mu}(\tau).
    \end{align*}
   We start with the term $I_1$. The first bracket can be bounded right away with Lemma~\ref{lem:bound_P_out_der}, and since $Z_\mu$ is Gaussian and independent of rest of $I_1$ we also bound $\EE_{\psi_1} |Z_\mu|^k$ with a constant.  The derivative of $S_{t\mu}$ is easily found with the help of (\ref{eq:part_der_X_X}), where after some simplification we have
   \begin{align*}
       |I_1|\leq\frac{ C(f)}{\sqrt{d_Ld_{\ell-1}}}\EE_{\psi,\tilde{\bX}_\mu^{(0)}}\int_{0}^{\pi/2}&\Big|\frac{1}{\sqrt{d_L}}\big(\ba^{*\intercal}\bB^{*[1:L]}_\mu\bB^{*[1:\ell-1]\intercal}_\mu(\tau)+\\
       &\frac{1}{\sqrt{d_{L-1}}}\bv^{*\intercal}\bB^{*[1:L-1]}_\mu\bB^{*[1:\ell-1]\intercal}_\mu(\tau)\big)_{j_\ell}(\ba^{*\intercal}\bB^{*[\ell+1:L]}_{\mu}\boldsymbol{\Phi}^{*(\ell-1)}_\mu)_{i_\ell}\Big|d\tau.
   \end{align*}
   Separating two terms and using properties of operator norm several times as well as Jensen and Cauchy-Swartz inequalities, we have
   \begin{align*}
       \EE\sum_{i_\ell,j_\ell}|I_1|^2&\leq \frac{C(f)}{d_{\ell-1}}\int\EE\Big[\frac{\|\ba^{*}\|^4}{d_L^2}\|\bB^{*[1:L]}_\mu\|^2\|\bB^{*[1:\ell-1]}_\mu(\tau)\|^2\nonumber\\
       &+\frac{\|\ba^{*}\|^2\|\bv^{*}\|^2}{d_L d_{L-1}}\|\bB^{*[1:L-1]}_\mu\|^2\|\bB^{*[1:\ell-1]}_\mu(\tau)\|^2\Big]\|\bB^{*[\ell+1:L]}_{\mu}\|^2\|\boldsymbol{\Phi}^{*(\ell-1)}_\mu\|^2\leq\frac{C(f,\varphi)}{d_{\ell-1}}.
   \end{align*}
    Let us move to $I_2$. Thanks to \eqref{eq:part_der_x_X}, the derivative of $s_{t\mu}$ has again two terms (one with $\ba$ and second with $\bv$, since they are of the same form, we will look only at the first one) 
   \begin{align}\label{eq:der_W_I2}
       |I_2|&\leq \frac{C(f)}{\sqrt{d_{\ell-1}}}\EE_{\psi_1,\psi_2,\tilde{\bX}^{(0)}_\mu}\int_0^{\pi/2}\Big(\Big\langle\Big|\frac{P_{\mu}^y}{P_{\mu}}\Big|\Big\rangle \Big\langle\Big|\frac{P_{\mu}^{x}}{P_{\mu}}\Big|\frac{|(\ba^{\intercal}\bB^{[1:L]}_\mu\bB^{*[1:\ell-1]\intercal}_\mu(\tau))_{j_\ell}|}{\sqrt{d_{L}}}\Big\rangle\\ &+\Big\langle\Big|\frac{P_{\mu}^{yx}}{P_{\mu}}\Big|\frac{|(\ba^{\intercal}\bB^{[1:L]}_\mu\bB^{*[1:\ell-1]\intercal}_\mu(\tau))_{j_\ell}|}{\sqrt{d_{L}}}\Big\rangle \Big)
        \frac{|(\ba^{*\intercal}\bB^{*[\ell+1:L]}_{\mu}\boldsymbol{\Phi}^{*(\ell-1)}_\mu)_{i_\ell}|}{\sqrt{d_L}}+\ldots.\nonumber
   \end{align}
    After using Lemma \ref{lem:bound_P_out_der} and Jensen inequality we get 
   \begin{align}
       \EE\sum_{i_\ell,j_\ell}|I_2|^2&\leq \frac{C(f)}{d_{\ell-1}}\EE\int_0^{\pi/2} \Big\langle\frac{\|\ba^{\intercal}\bB^{[1:L]}_\mu\bB^{*[1:\ell-1]\intercal}_\mu(\tau)\|^2}{d_{L}}\Big\rangle
        \frac{\|\ba^{*\intercal}\bB^{*[\ell+1:L]}_{\mu}\boldsymbol{\Phi}^{*(\ell-1)}_\mu\|^2}{d_L}(1+|Z_\mu|^2)^2d\tau
        \nonumber\\
        &\quad\quad\quad\quad+\ldots.
   \end{align}
   Lastly, separating quenched from annealed terms with the help of Cauchy-Swartz, we can get rid of the Gibbs bracket by  using again Jensen and Nishimori. This will leave as with moments of operator norms of  $\|d_L^{-1/2}\ba^*\|$, $\|\bB\|$, etc., since all of them are of order $O(1)$ we achieve the desired bound
   \begin{align}
       \EE\sum_{i_\ell,j_\ell}|I_2|^2\leq \frac{C(f)}{d_{\ell-1}}.
   \end{align}
  The final part of $E$ we bound right away with
  \begin{align}\label{eq:der_W_I3}
      |I_3|\leq \frac{C(f)}{\sqrt{d_{\ell-1}}} \EE_{\psi_1,\psi_2,\tilde{\bX}_\mu}\int_0^{\pi/2}\Big|\sum_{p=1}^{d_0}\Big[\frac{(\ba^{*\intercal}\bB^{*[\ell+1:L]}_{\mu}\boldsymbol{\Phi}^{*(\ell-1)}_\mu)_{i_\ell}}{\sqrt{d_L}}\Big]^{\prime}_{X_{p\mu}^{(0)}}B^{*[1:\ell-1]}_{\mu,j_\ell p}(\tau)\Big|d\tau.
  \end{align}
  Variable $X^{(0)}_{\mu,p}$ appears in each $\boldsymbol{\Phi}^{*(\cdot)}_\mu$ inside $\bB^{*[\ell+1:L]}_{\mu}$, so the derivative on the r.h.s. will be the sum of $L-\ell+1$ terms. The generic $k$-th term of this sum is of form
  \begin{align*}
   \Big(\underbrace{\frac{\ba^{*\intercal}}{\sqrt{d_L}}   \boldsymbol{\Phi}^{*(L-1)}_\mu\frac{\bW^{*(L)}}{\sqrt{d_{L-1}}}\boldsymbol{\Phi}^{*(L-2)}_\mu\ldots\frac{\bW^{*(k+2)}}{\sqrt{d_{k+1}}}}_{\mathbf{e}}&\frac{\partial\boldsymbol{\Phi}^{*(k)}_\mu}{\partial X_{p\mu}^{(0)}}\underbrace{\frac{\bW^{*(k+1)}}{\sqrt{d_{k}}}\ldots\boldsymbol{\Phi}^{*(\ell-1)}_\mu}_{\mathbf{E}}\Big)_{i_\ell}\\
   &=\sum_{i=1}^{d_{k+1}}\mathbf{e}_i(\bar{\bB}^{*[1:k+1]}_\mu)_{ip}\mathbf{E}_{ii_\ell}.
  \end{align*}
  Here, $\bar{\bB}^*_\mu$ refers to the same expression as $\bB^*_\mu$, where we take $\varphi''$ instead of $\varphi'$. Since the number of such terms is finite ($L-\ell+1$) we can focus only on one $k\in[\ell-1,\ldots,L-1]$, as the others are bounded in the same way (we denote them with $\ldots$ in the following). Plugging the previous equation into \eqref{eq:der_W_I3} we have
  \begin{align*}
      |I_3|&\leq  \frac{C(f)}{\sqrt{d_{\ell-1}}} \EE_{\psi,\tilde{\bX}_\mu}\int_0^{\pi/2}\Big|\sum_{i=1}^{d_{k+1}}\mathbf{e}_i(\bar{\bB}^{*[1:k+1]}_\mu\bB^{*[1:\ell-1]\intercal}_{\mu}(\tau))_{ij_\ell}\mathbf{E}_{ii_\ell}\Big|d\tau+\ldots\\
      &=\frac{C(f)}{\sqrt{d_{\ell-1}}} \EE_{\psi,\tilde{\bX}_\mu}\int_0^{\pi/2}\Big|(\mathbf{E}^\intercal \text{diag}(\mathbf{e})\bar{\bB}^{*[1:k+1]}_\mu\bB^{*[1:\ell-1]\intercal}_{\mu}(\tau))_{i_\ell j_\ell}\Big|d\tau+\ldots.
  \end{align*}
  With Jensen inequality we obtain
  \begin{align*}
      &\EE\sum_{i_\ell j_\ell}|I_3|^2\leq\\
      &\frac{C(f)}{d_{\ell-1}}\EE\int_0^{\pi/2}\Tr\Big(\mathbf{E}^\intercal \text{diag}(\mathbf{e})\bar{\bB}^{*[1:k+1]}_\mu\bB^{*[1:\ell-1]\intercal}_{\mu}(\tau)\bB^{*[1:\ell-1]}_{\mu}(\tau) \bar{\bB}^{*[1:k+1]\intercal}_\mu \text{diag}(\mathbf{e})\mathbf{E}\Big)d\tau+\ldots\\
      &\leq \frac{C(f)}{d_{\ell-1}}\EE\int_0^{\pi/2}\|\mathbf{E}\|^2\|\bar{\bB}^{*[1:k+1]}_\mu\bB^{*[1:\ell-1]\intercal}_{\mu}(\tau)\|^2\Tr \,\text{diag}(\mathbf{e})^2d\tau+\ldots\\
      &=\frac{C(f)}{d_{\ell-1}}\EE\int_0^{\pi/2}\|\mathbf{E}\|^2\|\bar{\bB}^{*[1:k+1]}_\mu\bB^{*[1:l-1]\intercal}_{\mu}(\tau)\|^2\|\mathbf{e}\|^2d\tau+\ldots.
  \end{align*}
  To come to this conclusion we used several times the fact that for a symmetric semidefinite positive matrix $\bA$ and any matrix $\bB$ we have $|\Tr(\bA\bB)|\leq \|\bB\|\Tr \bA$. Now we can use Cauchy-Swartz and notice that moments of  $\|\boldsymbol{\Phi}^{*(\cdot)}_\mu\|$,  $\|\frac{\bW^{*(\cdot)}}{\sqrt{d_{\cdot}}}\|$, and $\|\frac{\ba^{*\intercal}}{\sqrt{d_L}}\|$ are all of order $O(1)$. Which gives $ \EE\sum_{i_\ell j_\ell}|I_3|^2=O(d^{-1}_{\ell-1})$.  
  
  To conclude the proof of the Lemma we need to deal with the derivative with respect to $\bW^{*(1)}$. This terms are very similar to what  we had before 
  \begin{align*}
          \frac{\partial g}{\partial W_{i_1j_1}^{*(1)}}
    &=\EE_{\psi_1,\psi_2}\Big[\Big\langle\frac{P_{\mu}^y}{P_\mu}\Big\rangle f^\prime(S_{t\mu};\bA_\mu)\Big(\frac{\sqrt{1-t}(\ba^{*\intercal}\bB^{*[2:L]}_{\mu}\boldsymbol{\Phi}^{*(0)}_\mu)_{i_1}X^{(0)}_{j_1\mu}}{\sqrt{d_Ld_{0}}}\\
    &+\frac{\sqrt{t\rho_L}(\bv^{*\intercal}\bB^{*[2:L-1]}_{\mu}\boldsymbol{\Phi}^{*(0)}_\mu)_{i_1}X^{(0)}_{j_1\mu}}{\sqrt{d_{L-1}d_{0}}}\Big)\Big].
  \end{align*} 
  The only difference is that now we have a factor $X^{(0)}_{j_1\mu}$ which is Gaussian itself and allows us to integrate by parts without introducing the circular interpolation. The calculations themselves will be identical. 

  \textit{Concentration with respect to $\ba^*$ and $\bv^{*}$}. Derivatives of $g$ with respect to these two vectors are almost identical, so we demonstrate here only the part with $\frac{\partial g}{\partial a^{*}_i}$.
  \begin{align*}
      \frac{\partial g}{\partial a^{*}_i}\!=\!\dfrac{1}{n}\!\sum_{\mu=1}^n\EE_{\psi_1,\psi_2}\Big[\Big\langle\frac{P_{\mu}^y}{P_\mu}\Big\rangle f^\prime(S_{t\mu};\bA_\mu)\frac{\sqrt{1-t}X^{(L)}_{i\mu}}{\sqrt{d_L}}\Big]\!=\!\EE_{\psi_1,\psi_2}\Big[\Big\langle\frac{P_{\mu}^y}{P_\mu}\Big\rangle f^\prime(S_{t\mu};\bA_\mu)\frac{\sqrt{1-t}X^{(L)}_{i\mu}}{\sqrt{d_L}}\Big].
  \end{align*}
  Following the lines of the first part of this Lemma we introduce the circular interpolation (\ref{eq:circ_inter_X})-(\ref{eq:cov_X_tau}) and rewrite the previous expression as
    \begin{align}
      \frac{\partial g}{\partial a^{*}_i}=\EE_{\psi_1,\psi_2}\int_0^{\pi/2}\Big[\Big\langle\frac{P_{\mu}^y}{P_\mu}\Big\rangle f^\prime(S_{t\mu};\bA_\mu)\frac{\sqrt{1-t}\sum_{p=1}^{d_0}B^{*[1:L]}_{ip}(\tau)\dot{X}_{p\mu}^{(0)}(\tau)}{\sqrt{d_L}}\Big]d\tau.
  \end{align}
  Proceeding with the integration by parts with respect to $\dot{X}_{\mu,p}^{(0)}(\tau)$ we obtain
  \begin{align}
       \frac{\partial g}{\partial a^{*}_i}=\EE_{\psi_1,\psi_2}\int_0^{\pi/2}\sum_{p=1}^{d_0}\cos\tau\Big[\Big\langle\frac{P_{\mu}^y}{P_\mu}\Big\rangle f^\prime(S_{t\mu};\bA_\mu)\Big]^\prime_{X^{(0)}_{p\mu}}\frac{\sqrt{1-t}B^{*[1:L]}_{\mu,ip}(\tau)}{\sqrt{d_L}}d\tau=I_1+I_2,
  \end{align}
  where we again denote
    \begin{align*}
        I_1&=\EE_{\psi_1,\psi_2,\tilde{\bX}^{(0)}_\mu}\int_0^{\pi/2}\cos\tau\sum_{p=1}^{d_0}\Big(-\Big\langle\frac{P_{\mu}^y}{P_{\mu}}\Big\rangle^2f^\prime(S_{t\mu};\bA_\mu)^2+\Big\langle\frac{P_{\mu}^{yy}}{P_{\mu}}\Big\rangle f^\prime(S_{t\mu};\bA_\mu)^2\\
        &+\Big\langle\frac{P_{\mu}^y}{P_{\mu}}\Big\rangle f^{\prime\prime}(S_{t\mu};\bA_\mu)\Big)\frac{\partial S_{t\mu}}{\partial X^{(0)}_{p\mu}}\frac{\sqrt{1-t}B^{*[1:L]}_{\mu,ip}(\tau)}{\sqrt{d_L}},\\
        I_2&=\EE_{\psi_1,\psi_2,\tilde{\bX}_\mu}\int_0^{\pi/2}\cos\tau\sum_{p=1}^{d_0}\Big(-\Big\langle\frac{P_{\mu}^y}{P_{\mu}}\Big\rangle \Big\langle\frac{P_{\mu}^{x}}{P_{\mu}}\frac{\partial s_{t\mu}}{\partial X^{(0)}_{p\mu}}\Big\rangle +\Big\langle\frac{P_{\mu}^{yx}}{P_{\mu}}\frac{\partial s_{t\mu}}{\partial X^{(0)}_{p\mu}}\Big\rangle \Big)\\
        &\times f^{\prime}(S_{t\mu};\bA_\mu)\frac{\sqrt{1-t}B^{*[1:L]}_{\mu,ip}(\tau)}{\sqrt{d_L}}\,.
    \end{align*}
    Using (\ref{eq:part_der_X_X}) to calculate derivative of $S_\mu$ and bounding ratios of $P_\mu$ with Lemma~\ref{lem:bound_P_out_der}, we obtain
    \begin{align*}
        |I_1|\leq \frac{C(f)}{\sqrt{d_L}}\EE_{\psi_1,\psi_2,\tilde{\bX}_\mu}\int_0^{\pi/2}\Big(\frac{\sqrt{1-t}\ba^{*\intercal}}{\sqrt{d_L}}\bB^{*[1:L]}_\mu \bB^{*[1:L]\intercal}_\mu(\tau)+\frac{\sqrt{t}\rho_L\bv^{*\intercal}}{\sqrt{d_{L-1}}}\bB^{*[1:L-1]}_\mu \bB^{*[1:L]\intercal}_\mu(\tau)\Big)_{i}.
    \end{align*}
    Taking the sum over $i$ of squared latter expression we obtain
        \begin{align*}
        \sum_{i}|I_1|^2\leq \frac{C(f,t)}{d_L}\EE\int_0^{\pi/2}d\tau\Big(\Big\|\frac{\ba^{*\intercal}}{\sqrt{d_L}}\bB^{*[1:L]}_\mu\bB^{*[1:L]}_\mu(\tau)\Big\|^2+\Big\|\frac{\bv^{*\intercal}}{\sqrt{d_{L-1}}}\bB^{*[1:L-1]}_\mu\bB^{*[1:L]}_\mu(\tau)\Big\|^2\Big).
    \end{align*}
    Again, since $\EE\|\ba^*\|^2/d_L$ and $\EE\|\bB^{*[\ell_1:\ell_2]}_\mu(\tau)\|$ are all of order $O(1)$, we obtain that $\sum_{i}|I_1|^2\leq C(f,\varphi)/d_{L}$.
    Term $I_2$ is handled very similar to \eqref{eq:der_W_I2}, resulting in the bound $\sum_{i}|I_2|^2\leq C(f,\varphi)/d_{L}$, which concludes the proof.
\end{proof}

\end{document}